\title{Log canonical thresholds of del Pezzo Surfaces in characteristic p}
\subjclass[2010]{Primary 14J45; Secondary 14G17}
\keywords{del Pezzo surfaces, log canonical thresholds, positive characteristic, K-stability, birational automorphisms, intersection of two quadrics}
\date{29 December 2013}
\author{Jesus Martinez-Garcia}
\address{Johns Hopkins University\newline
\indent Department of Mathematics\newline
\indent 3400 N. Charles St. \newline
\indent Baltimore 21218\newline
\indent USA}%
\email{jmart168@math.jhu.edu}%
\urladdr{http://math.jhu.edu/jmartinezgarcia/}
\begin{document}

\begin{abstract}
The global log canonical threshold of each non-singular complex del Pezzo surface was computed by Cheltsov. The proof used Koll\'ar-Shokurov's connectedness principle and other results relying on vanishing theorems of Kodaira type, not known to be true in finite characteristic.

We compute the global log canonical threshold of non-singular del Pezzo surfaces over an algebraically closed field. We give algebraic proofs of results previously known only in characteristic $0$. Instead of using of the connectedness principle we introduce a new technique based on a classification of curves of low degree. As an application we conclude that non-singular del Pezzo surfaces in finite characteristic of degree lower or equal than $4$ are K-semistable.
\end{abstract}

\maketitle
%\pagenumbering{arabic}
\section{Introduction and definitions}
The log canonical threshold of an algebraic variety\footnote{Unless otherwise mentioned, all varieties in this article are projective normal varieties over an algebraically closed field $k$ of characteristic $p\geq 0$.} $X$, $\glct(X)$, is a numerical invariant introduced by Shokurov in the setting of the Minimal Model Program.

\begin{dfns}\label{dfn:logResol}
A \emph{log pair} $(X,D=\sum d_i D_i)$ is a pair where $X$ is a variety and $D\subset X$ an effective $\bbQ$-divisor which is $\bbQ$-Cartier.

A \emph{log resolution} of $(X, D)$ is a proper birational morphism $\sigma\colon Y \ra X$ such that $Y$ is non-singular, the support of the strict transform $\widetilde D:=(\sigma^{-1})_*(D)$ of $D$ is non-singular, the exceptional set $\Ex(\sigma)$ has pure codimension one, and $\Ex(\sigma)\cup\Supp(\widetilde D)$ intersects with simple normal crossings.
\end{dfns} 
An \emph{embedded resolution} of $(X,D)$ is a resolution in which $\Supp(D)$ is seen as a subvariety of $X$ and $X$ and $D$ are resolved \emph{at the same time}. Once an embedded resolution is found it can be easily modified into a log resolution.

Finding embedded resolutions or abstract resolutions of a pair $(X,D)$ over an algebraically closed field $k$ is an open problem. Embedded resolutions exist when $\charac(k)=0$ for all dimensions. For algebraically closed fields of finite characteristic they exist when $\dim(X)\leq 3$ thanks to a recent result by Cossart and Piltant \cite{CossartPiltant1},\cite{CossartPiltant2}. Previously Abhyankar had shown that embedded resolutions exist when $\dim(X) =3$ and $\charac(k)> 5$ \cite{AbhyankarBook}.

Assume $X$ is normal and $\bbQ$-factorial and let $\sigma\colon Y\ra X$ be a proper birational modification of the pair $(X,D)$. We may write 
\begin{equation}
K_{Y}+ D_Y + \sum a(F_j,X,D)F_j \equiv \sigma^*(K_X+D),
\label{eq:logres}
\end{equation}
where the $F_j$ are exceptional divisors and $D_Y$ is the strict transform of $D$ in $Y$.\footnote{In the context of the Minimal Model Programme, the terms $a(F_j,X,D) F_j$ usually appear at the other side of the inequality. Our notation is consistent with the literature on computations of global log canonical thresholds, as in \cite{CheltsovLCTdP}.}
We call $-a(F_j,X,D)$, the \emph{discrepancy of $F_j$ with respect to $(X,D)$} and we often write $-a_j$ if no confusion is likely. Denote by $\disc(X,D)=\inf \{-a(F_j,X,D)\}$, the \emph{discrepancy} of the pair $(X,D)$ where the infimum is taken over all proper birational modifications of $X$.

Note that if there is a proper birational morphism $\sigma$ such that $-a_j<-1$ for some exceptional divisor $F_j$, then we can blow-up repeatedly non-singular loci in $F_j$ and its strict transforms to obtain a new proper birational morphism $\sigma'\colon Y' \ra X$ factoring through $Y$ and such that one of the discrepancies $-a_k=-a(F_k,X,D)$ is arbitrarily small. In particular we obtain that $\disc(X,D)=-\infty$.

\begin{dfn}\label{dfn:mmpPairs}
Let $(X,D=\sum d_i D_i)$ be a log pair. We say $(X,D)$ is \emph{log canonical} if $\disc(X,D)\geq -1$ and $d_i\leq 1$ for all $i$.

We say $(X,D)$ \emph{is log canonical at $p\in X$} if $(U,D\vert_U)$ is log canonical where $U$ is a Zariski open neighbourhood of $p$.
\end{dfn}
By \cite[Cor. 3.13]{KolPairs} it is enough to compute the discrepancies under one log resolution $\sigma\colon Y \ra X$ of $(X,D)$ to decide if the pair is log canonical.

\begin{dfn}\label{dfn:lcts}
\mbox{}
\begin{itemize}
	\item[(i)] The \emph{log canonical threshold of the pair} $(X,D)$ is
$$\lct(X,D)=\max\{\lambda \setsep (X,\lambda D) \text{ is log canonical}\}.$$
	\item[(ii)] The \emph{local log canonical threshold} of the pair $(X,D)$ at $p\in X$ is
$$\lct_p(X,D)=\max\{\lambda \setsep (X,\lambda D) \text{ is log canonical at } p\}.$$
	\item[(iii)] The \emph{global log canonical threshold} of $X$ is
$$\glct(X)=\sup\left\{\lambda \ \left|
\aligned
&(X,\lambda D) \text{ is log canonical for all }\\
&\text{ effective }\bbQ\text{-divisors }D\simq -K_X \\
%&\text{the log pair}\ \left(X, \lambda D\right)\ \text{has log canonical singularities}\\
%&\text{for every effective $\mathbb{Q}$-divisor}\ D\sim_{\mathbb{Q}} H\\
\endaligned\right.\right\}.
$$
\end{itemize}
\end{dfn}

Obviously $\lct(X,D)\leq \lct_p(X,D)$ for all $p\in X$. Notice that if $(X,0)$ is not log canonical, then $\glct(X)=-\infty$. It is not yet known whether $\glct(X)$ is a rational number. The following conjecture, first stated by Tian for complex Fano manifolds, generalises the one in \cite[Conj. 1.4]{Cheltsov-Shramov-Park-WPS} for complex Fano varieties:
\begin{conj}
\label{conj:LCTrational}
Let $X$ be a projective Fano variety over an algebraically closed field. Suppose $X$ is $\bbQ$-factorial and has at worst log terminal singularities. Then 
\mbox{}
\begin{itemize}
	\item[(i)] there is an effective divisor $D\in \vert-mK_X\vert$ for some $m\in\bbN$ such that $\glct(X)=\lct(X,\frac{1}{m}D)$, and
	\item[(ii)] the global log canonical threshold $\glct(X)$ is a rational number.
\end{itemize}
\end{conj}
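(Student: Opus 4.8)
Strictly speaking this is a conjecture, not a theorem, and it remains open in positive characteristic, so what follows is a strategy proposal rather than a proof. The natural plan is to reduce both parts to a single assertion: that the supremum defining $\glct(X)$ in Definition~\ref{dfn:lcts}(iii) is attained, and attained inside $\frac1n\vert-nK_X\vert$ for some $n=n(\dim X)$ depending only on the dimension. Granting this, part (i) is immediate, and part (ii) follows because the log canonical threshold of a \emph{fixed} pair $(X,D)$ is rational: choosing a log resolution $\sigma\colon Y\ra X$ of $(X,\Supp D)$ and writing $K_Y\equiv\sigma^*K_X+\sum b_jF_j$ and $\sigma^*D=\widetilde D+\sum c_jF_j$ with $\widetilde D=\sum\widetilde d_i\widetilde D_i$, one has $\lct(X,D)=\min\bigl(\min_i 1/\widetilde d_i,\ \min_{c_j>0}(1+b_j)/c_j\bigr)$, a minimum of finitely many rational numbers; this uses only \cite[Cor. 3.13]{KolPairs} and the existence of embedded resolutions. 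So the whole conjecture rests on producing a computing divisor with controlled $n$.

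To produce it I would argue as follows. For each fixed $m$, the function $D\mapsto\lct(X,\frac1m D)$ is lower semicontinuous on the linear system $\vert-mK_X\vert$; since this linear system is a projective space, hence a Noetherian space, the function attains a minimum $\lambda_m$, and $\glct(X)=\inf_m\lambda_m$. The difficulty is that a priori one must allow arbitrarily large $m$. This is exactly where boundedness enters: by Shokurov's theory of $n$-complements one expects a uniform bound $n=n(\dim X)$ such that the infimum is already realised among $n$-complements of $K_X$, i.e.\ inside $\frac1n\vert-nK_X\vert$, which simultaneously pins down the computing divisor and yields rationality. The two external inputs needed are boundedness of the relevant family of Fano varieties (a BAB-type statement) and the ascending chain condition for log canonical thresholds.

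The main obstacle is that both inputs — boundedness of complements and boundedness of Fanos — are currently established using the full Minimal Model Program together with Kawamata--Viehweg vanishing, which fail or are unavailable in positive characteristic beyond dimension three. In characteristic zero the conjecture is therefore a theorem, via Birkar's proof of the BAB conjecture and of boundedness of complements. In positive characteristic I would expect the strategy to go through unconditionally only for del Pezzo surfaces and for Fano threefolds in large characteristic, where Cossart--Piltant resolution and a threefold Minimal Model Program are available, and to remain genuinely conjectural in higher dimension. For del Pezzo surfaces, of course, the explicit determination of $\glct(X)$ carried out in this paper already verifies both parts directly, by exhibiting the computing divisor by hand.
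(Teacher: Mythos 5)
The statement you were asked about is a conjecture, and the paper contains no proof of it: it explicitly notes that part (i) implies part (ii) because $\lct(X,D)$ is rational for any fixed effective $\bbQ$-divisor $D$, and then \emph{verifies} the conjecture only for non-singular del Pezzo surfaces, where the explicit computation of $\glct(S)$ in Theorem \ref{thm:Main} and Corollary \ref{cor:main} exhibits a computing divisor already in $\vert -K_S\vert$ (so $m=1$). You correctly identify the statement as open and frame your answer as a strategy, which is the right call. Your reduction of (ii) to (i) via the log-resolution formula $\lct(X,D)=\min\bigl(\min_i 1/\widetilde d_i,\ \min_{c_j>0}(1+b_j)/c_j\bigr)$ coincides with the paper's own remark following the conjecture. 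Beyond that, your route is genuinely different from anything in the paper: you propose lower semicontinuity of $D\mapsto\lct(X,\tfrac1m D)$ on $\vert -mK_X\vert$ plus Noetherian attainment to get a minimiser $\lambda_m$ for each $m$, and then boundedness of complements and ACC to cap $m$ uniformly. That is the standard characteristic-zero approach (and in characteristic zero the conjecture is indeed a theorem by this route), and you are honest about where it fails in characteristic $p$. Two caveats are worth recording. First, lower semicontinuity of the log canonical threshold in families is itself a nontrivial input (proved in characteristic zero via jet schemes or analytic methods) and is not established in this paper even for surfaces, so the surface case of your strategy would still need it; the paper's explicit construction of the divisors $G$ and $H$ and the case analysis on low-degree curves sidesteps this entirely and, as a bonus, produces the sharp value $m=1$ rather than a dimension-dependent bound. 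Second, your claim that the minimum over $\vert -mK_X\vert$ exhausts $\glct(X)$ uses that every effective $\bbQ$-divisor $\bbQ$-linearly equivalent to $-K_X$ lies in some $\tfrac1m\vert -mK_X\vert$; this is fine but should be said, since the definition of $\glct$ in the paper ranges over $\bbQ$-divisors, not members of linear systems. As a review of an open conjecture, your proposal is a reasonable and correctly hedged account of the general strategy, complementary to the paper's unconditional but special-case verification.
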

Note that for any effective $\bbQ$-divisor $D$, $\lct(X,D)$ is a rational number. Therefore part (i) of Conjecture \ref{conj:LCTrational} implies part (ii). This conjecture is not known to be true even for complex del Pezzo surfaces. However, there is strong evidence to support it. In fact, in the case of complex del Pezzo surfaces with Du Val singularities, $D$ can be found in $\vert -mK_X\vert$ for $m\leq 6$ (see \cite{Kosta-thesis} and \cite{Park-Won-lctDuVal}). In this article we verify Conjecture \ref{conj:LCTrational} for non-singular del Pezzo surfaces over an algebraically closed field. Our computation shows that in this case we may take $m=1$.

In \cite[Thm. A.3]{CheltsovShramovLct3folds}, Demailly gives an elegant proof of the following result: the global log canonical threshold of a smooth complex Fano variety $X$ coincides with Tian's $\alpha$-invariant $\alpha(X)$ introduced in \cite{TianAlphaInvariant}. Tian's $\alpha$-invariant is defined in a differential-geometric context. The main result in \cite{TianAlphaInvariant} is that if $\alpha(X)>\frac{n}{n+1}$ where $n$ is the dimension of $X$, then $X$ can be equipped with a K\"ahler-Einstein (KE) metric.

Ten years later, Tian proved \cite{TianKEimpliesAnalyticKstability} that the existence of a KE metric in a non-singular Fano variety is a sufficient condition for $X$ to be analytically $K$-stable. The definition of $K$-stability is rather technical and it involves the use of certain deformations of $X$ known as \textit{test configurations}, so we refer the reader to \cite{OdakaAnnals} for a detailed account.

Very recently, Chen, Donaldson and Sun (see \cite{Chen-Donaldson-Sun-Kstability}), and Tian in \cite{Tian-Kstability-solution} have independently proved that the existence of a K\"ahler-Einstein metric on a smooth complex projective variety $X$ is equivalent to $X$ being K-stable. A direct algebraic proof of the relation between the global log canonical threshold and K-stability avoiding K\"ahler-Einstein metrics is known:
\begin{thm}[\cite{OdakaSanoAlphaInvariant}]
\label{thm:OdakaSanoResult}
Let $X$ be a $\mathbb{Q}$-Fano variety of dimension $n$ and  suppose that $\glct(X)> \frac{n}{n+1}$ (resp. $\glct(X)\geq \frac{n}{n+1}$). 
Then, $(X,\mathcal{O}_{X}(-K_X))$ is K-stable $($resp.\ K-semistable$)$. 
\end{thm}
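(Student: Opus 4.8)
This is the main theorem of \cite{OdakaSanoAlphaInvariant}; its proof is purely algebraic, hence valid over an algebraically closed field of arbitrary characteristic as soon as resolution of singularities is available in dimension $n+1$ --- for $n=2$ this is guaranteed by \cite{CossartPiltant1},\cite{CossartPiltant2}. The plan is to bound the Donaldson--Futaki invariant of an arbitrary test configuration from below by a quantity governed by $\glct(X)$. Recall that $(X,\mathcal{O}_X(-K_X))$ is K-stable (resp.\ K-semistable) if $DF(\mathcal{X},\mathcal{L})>0$ (resp.\ $\geq 0$) for every nontrivial test configuration $(\mathcal{X},\mathcal{L})$ of $(X,\mathcal{O}_X(-K_X))$. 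First I would restrict the class to be checked: by the normalization argument of Ross--Thomas, refined by Odaka, passing to the normalization does not increase $DF$, so it suffices to treat normal test configurations, which one compactifies to flat projective families $\pi\colon\bar{\mathcal{X}}\to\mathbb{P}^1$ carrying an extended $\mathbb{G}_m$-action and a polarization $\bar{\mathcal{L}}$.

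Next I would replace $DF$ --- originally defined through the $\mathbb{G}_m$-weights on $H^0(\mathcal{X}_0,\mathcal{L}_0^{\otimes m})$ --- by Odaka's intersection-theoretic formula on the $(n+1)$-fold $\bar{\mathcal{X}}$, which expresses $2(n+1)(-K_X)^n\cdot DF(\bar{\mathcal{X}},\bar{\mathcal{L}})$ as $n(\bar{\mathcal{L}})^{n+1}+(n+1)(\bar{\mathcal{L}})^{n}\cdot K_{\bar{\mathcal{X}}/\mathbb{P}^1}$, up to the sign conventions one adopts for $DF$ and for the relative canonical class. Since $X$ is Fano, $\bar{\mathcal{L}}$ differs from $-K_{\bar{\mathcal{X}}/\mathbb{P}^1}$ only by a divisor supported on the central fibre; substituting this into the formula and, if necessary, replacing $\bar{\mathcal{X}}$ by the output of a relative minimal model program over $\mathbb{P}^1$ along which $DF$ does not increase, one reduces to the anticanonical case $\bar{\mathcal{L}}=-K_{\bar{\mathcal{X}}/\mathbb{P}^1}$. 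There the formula exhibits the sign of $DF$ in terms of the singularities of the pair $(\bar{\mathcal{X}},\bar{\mathcal{X}}_{0,\mathrm{red}})$ along the central fibre --- modulo a correction term that is itself a log canonical threshold --- and, in particular, shows that $DF(\bar{\mathcal{X}},\bar{\mathcal{L}})\geq 0$ whenever that pair is log canonical.

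Finally I would bring in $\glct(X)$. The $\mathbb{G}_m$-degeneration induces a filtration of the anticanonical ring $\bigoplus_{m\geq 0}H^0(X,-mK_X)$, and from general sections adapted to a hypothetical non-log-canonical place of $(\bar{\mathcal{X}},\bar{\mathcal{X}}_{0,\mathrm{red}})$ one manufactures an effective $\bbQ$-divisor $D\simq -K_X$ whose log canonical threshold is bounded above by the correction ratio of the previous step. Consequently $\glct(X)>\frac{n}{n+1}$ forces $(\bar{\mathcal{X}},\bar{\mathcal{X}}_{0,\mathrm{red}})$ to be log canonical and $DF(\bar{\mathcal{X}},\bar{\mathcal{L}})>0$, whence K-stability; the weaker bound $\glct(X)\geq\frac{n}{n+1}$ gives $DF\geq 0$, whence K-semistability.

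The step I expect to be the main obstacle is the reduction of the second paragraph together with the construction of the third: arranging the anticanonical model through the relative minimal model program while controlling the variation of $DF$, and producing the divisor $D$ on $X$ from a non-log-canonical place of the test configuration with a sharp enough bound on its log canonical threshold. In positive characteristic one must, in addition, have the relevant minimal model program and resolution statements available in dimension $n+1$ --- which, for del Pezzo surfaces, is precisely the range covered by the work of Cossart and Piltant cited above.
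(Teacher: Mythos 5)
The paper never proves this statement: it is quoted verbatim from \cite{OdakaSanoAlphaInvariant}, and the author's only addition is the one-line remark that the proof is algebraic and needs resolution of singularities, hence applies to del Pezzo surfaces in characteristic $p$ by Cossart--Piltant. So the only meaningful comparison is with the cited proof, and there your outline goes wrong at one specific point. Odaka and Sano do \emph{not} reduce to the anticanonical case $\bar{\mathcal{L}}=-K_{\bar{\mathcal{X}}/\bbP^1}$ by running a relative minimal model program; that reduction to ``special test configurations'' is the later argument of Li--Xu, and it requires the full MMP in dimension $n+1$. For this paper the distinction is not cosmetic: the entire point is to work over fields of positive characteristic, where the three-dimensional MMP was not available (Cossart--Piltant provide resolution, not the MMP), so a proof routed through that reduction would not justify Corollary \ref{cor:k-stability}. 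As written, the step you yourself single out as the main obstacle is one the cited proof deliberately avoids and one that cannot be carried out in the setting where the paper needs the theorem.

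What Odaka--Sano actually do is: (a) reduce, via Odaka's earlier intersection-theoretic work, to semi test configurations obtained by blowing up a flag ideal $\mathcal{J}=I_0+I_1t+\cdots+(t^N)$ on $X\times\bbA^1$ --- this replaces your normalization-and-compactification step and makes the MMP unnecessary; (b) take a log resolution of $(X\times\bbP^1,\mathcal{J})$ (this is where resolution of singularities of an $(n+1)$-fold enters --- your count of $n+1$ is the honest one, and for $n=2$ it is exactly the range covered by Cossart--Piltant) and split the Donaldson--Futaki invariant, via the intersection formula you quote, into a ``canonical divisor part'' and a ``discrepancy part''; (c) bound the discrepancy part using $\glct(X)\geq\frac{n}{n+1}$, essentially by taking general members of $\vert -mK_X\vert$ adapted to $\mathcal{J}$ to manufacture the effective $\bbQ$-divisor $D\simq -K_X$ you describe, and handle the canonical part by a negativity-lemma argument. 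Your first paragraph and your final construction of $D$ are consistent with this; the middle reduction should be deleted and replaced by the flag-ideal decomposition.
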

The proof uses resolution of singularities for dimension $n$, so it is valid in finite characteristic when $\dim(X)\leq 3$.

Although it is introduced above in the context of K\"ahler-Einstein metrics, $K$-stability is interesting in birational geometry on its own right. For instance, in \cite{OdakaAnnals}, Odaka shows that, given certain conditions, if $(X,L)$ is $K$-stable where $L$ is an ample line bundle then $X$ has only semi-log canonical singularities (the proof assumes $\charac(k)=0$).

The purpose of this article is to study the global log canonical thresholds of del Pezzo surfaces, proving the following result:
\begin{thm}[Main Theorem]
\label{thm:Main}
Let $S$ be a non-singular del Pezzo surface over an algebraically closed field $k$. Then:
\begin{equation*}
\glct(S)=\omega:=\left\{
	\begin{array}{rl}
		5/6 &\text{when } K_S^2=2 \text{ and } \vert -K_S\vert \text{ has no tacnodal curves,}\\
		3/4 &\text{when } K_S^2=2 \text{ and } \vert -K_S\vert \text{ has some tacnodal curve,}\\
		2/3 &\text{when } K_S^2=4,\\
\end{array}
\right.
\end{equation*}
\end{thm}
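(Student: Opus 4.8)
The plan is to establish the two inequalities $\glct(S)\le\omega$ and $\glct(S)\ge\omega$ separately, the second being the substantial one. For the upper bound I would, in each case, exhibit one effective $\bbQ$-divisor $D\simq -K_S$ realising $\omega$. When $K_S^2=4$, I would exploit that $S$ carries sixteen lines ($(-1)$-curves) with many mutual incidences: choosing two of them, $L_1$ and $L_2$, that meet at a point $x$, the class $-K_S-L_1-L_2$ is a base-point-free conic pencil, so one of its members $C$ passes through $x$; then $D=L_1+L_2+C$ has, for a suitable choice, an ordinary triple point at $x$, whence $\lct_x(S,D)=2/3$. When $K_S^2=2$, I would use the presentation of $S$ as a double cover of $\mathbb{P}^2$ branched over a quartic: the preimage of an appropriate tangent line gives a cuspidal member of $|-K_S|$ in general (so $\glct(S)\le 5/6$), and a tacnodal member exactly when $|-K_S|$ contains one (so $\glct(S)\le 3/4$). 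The local log canonical thresholds of a cusp ($5/6$), a tacnode ($3/4$) and an ordinary triple point ($2/3$) are each computed once by a single weighted blow-up.

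For the lower bound, suppose for contradiction that there is an effective $\bbQ$-divisor $D\simq -K_S$ with $(S,\omega D)$ not log canonical, failing at a point $x$. I would start from the standard facts that on a smooth surface $\operatorname{mult}_x(\omega D)>1$, hence $\operatorname{mult}_x D>1/\omega$; that $\operatorname{mult}_x D\cdot\operatorname{mult}_x Z\le D\cdot Z=-K_S\cdot Z$ for every irreducible curve $Z\ni x$ with $Z\not\subset\Supp(D)$; and that $D^2=K_S^2$. The place where characteristic $p$ obstructs the classical proof is the use of the Koll\'ar--Shokurov connectedness principle; in its stead I would use a classification of the irreducible curves $Z$ on $S$ of small anticanonical degree --- lines, conics, and for each $S$ an explicit finite further list, together with their possible singularities and incidences. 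Already this forces any line $L$ through $x$ to be a component of $D$, since otherwise $\operatorname{mult}_x D\le D\cdot L=-K_S\cdot L=1<1/\omega$; applied to conics and to members of $|-K_S|$, it pins down how much of $D$ can sit at $x$.

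I would then carry out a case analysis according to the position of $x$. If $x$ lies on no line, a general curve of small anticanonical degree through $x$ supplied by the classification already yields $\operatorname{mult}_x D\le 1/\omega$, a contradiction. If $x$ lies on a line $L$, I would write $D=aL+\Omega$ with $L\not\subset\Supp(\Omega)$; then $\Omega\cdot L=1+a$, together with $D^2=K_S^2$ and intersections with the remaining lines and conics, bounds $a$ and $\operatorname{mult}_x\Omega$, after which I would blow up $x$ --- and if necessary finitely many infinitely near points --- and compute the discrepancies directly to contradict the failure of log canonicity at the threshold $\omega$. For $K_S^2=2$ this splits exactly along the tacnodal/non-tacnodal dichotomy: when $|-K_S|$ has a tacnodal curve the argument only reaches $\omega=3/4$, and one checks from the branch quartic that no member of $|-K_S|$ carries a worse-than-tacnodal singularity; when it has none, the classification of singular anticanonical curves lets one reach $\omega=5/6$. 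Throughout, only resolution of singularities for surfaces, intersection theory and this curve classification intervene --- never vanishing of Kodaira type --- so the argument is valid in every characteristic; the finitely many characteristic-$p$ configurations (unusual singular members of $|-K_S|$, the line configuration, and in degree $2$ the branch quartic when $\charac(k)=2$) are isolated and treated by hand.

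The main obstacle is the line case: having lost the connectedness principle, one cannot reduce to a single bad point and restrict to a curve, so every configuration $D=\sum a_iC_i$ must be resolved explicitly. The technical heart is to extract, from the equalities $D\cdot C_j=-K_S\cdot C_j$ and $D^2=K_S^2$, uniform characteristic-free bounds on the coefficients $a_i$ --- in effect a linear-programming problem over the effective cone of $S$ --- sharp enough that the explicit discrepancy computation never permits $(S,\omega D)$ to fail log canonicity.
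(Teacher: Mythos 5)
Your overall strategy---replacing the connectedness principle by a classification of low-degree curves, convexity, and direct intersection/discrepancy computations after blow-ups---is the same as the paper's, and your upper-bound constructions (the ordinary triple point $L_1+L_2+C$ in degree $4$, the cuspidal and tacnodal anticanonical members in degree $2$) agree with it. However, there is a concrete gap in your lower bound, precisely in the case you declare easy: a bad point $x$ lying on no line. You claim that a curve $Z$ of small degree through $x$ with $Z\not\subset\Supp(D)$ already yields $\mult_x D\le 1/\omega$ and hence an immediate contradiction. This cannot work: for any irreducible $Z\ni x$ not contained in $\Supp(D)$ one only gets $\mult_x D\le D\cdot Z=\deg Z$, and $\deg Z$ is a positive integer which is at least $2$ once $x$ avoids all lines, whereas $1/\omega$ equals $3/2$ (degree $4$) or $6/5$, $4/3$ (degree $2$). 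So the best bound obtainable at $x$ itself is $\mult_x D\le 2$, which is perfectly compatible with $(S,\omega D)$ failing to be log canonical at $x$. This case therefore requires the same two-step mechanism you reserve for points on a line: blow up $x$, locate the point $q\in E$ where the log pullback is still not log canonical, deduce $\mult_q\widetilde D+\mult_x D>3$ (degree $4$) by Skoda's inequality on the blow-up, and then produce irreducible curves of degree at most $3$ through $x$ whose strict transforms pass through the \emph{infinitely near} point $q$ and which can be removed from $\Supp(D)$ by convexity. This is exactly the role of the auxiliary $\bbQ$-divisors of Lemmas \ref{lem:del-Pezzo-deg4-aux-divisors-G-Basic} and \ref{lem:del-Pezzo-deg4-aux-divisors-H}, whose construction---in particular proving that the required cubics through $x$ and $q$ are irreducible, via Lemma \ref{lem:del-Pezzo-deg4-cubics}---is the technical heart of the degree-$4$ proof and is absent from your outline.

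Two smaller points. First, in degree $4$ the subcase $x=L_1\cap L_2$ is genuinely delicate: writing $D=aL_1+bL_2+\Omega$ and intersecting with $L_1$, $L_2$ and a conic through $x$ bounds $a+b$ and $\mult_x\Omega$, but a naive discrepancy computation after a few blow-ups does not close the case; the paper invokes the refined local inequality of Theorem~\ref{thm:inequality-Cheltsov} (either $(\Omega\cdot L_1)\vert_x>2(1-b)$ or $(\Omega\cdot L_2)\vert_x>2(1-a)$) to reach the contradiction $2<3\lambda<2$. Your proposed ``linear programming'' with $D^2=K_S^2$ is not used by the paper, and without a substitute for this inequality it is unclear your bounds are sharp enough here. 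Second, you should say explicitly how you exclude a one-dimensional non-klt locus (a component $D_i$ with coefficient exceeding $1/\omega$); the paper does this by pairing $D$ against an irreducible complementary curve $Z$ with $D_i+Z\in\vert-K_S\vert$ (Lemma \ref{lem:del-Pezzo-deg4-complementary-hyperplane-section}), another ingredient your outline omits.
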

Cheltsov computed the global log canonical threshold of all complex non-singular del Pezzo surfaces \cite[Thm. 1.7]{CheltsovLCTdP}. When $K_S^2\neq 2,4$, Cheltsov uses Skoda's inequality (Lemma \ref{lem:adjunction}, for which we give an algebraic proof for surfaces), and lemmas \ref{lem:Convexity} and \ref{lem:logcanUpDown}, as well as the classification of del Pezzo surfaces (see Theorem \ref{thm:dPclassification}) as \emph{black boxes} in his proof. In this article we show that all these tools hold for algebraically closed fields. Therefore, together with Theorem \ref{thm:Main} we obtain:
\begin{cor}
\label{cor:main}
Let $S$ be a non-singular del Pezzo surface over an algebraically closed field $k$. Then:
\begin{equation*}
\glct(S)=\omega:=\left\{
	\begin{array}{rl}
		1		&\text{when } K_S^2=1 \text{ and } \vert -K_S\vert \text{ has no cuspidal curves,}\\
		5/6 &\text{when } K_S^2=1 \text{ and } \vert -K_S\vert \text{ has some cuspidal curve,}\\
		5/6 &\text{when } K_S^2=2 \text{ and } \vert -K_S\vert \text{ has no tacnodal curves,}\\
		3/4 &\text{when } K_S^2=2 \text{ and } \vert -K_S\vert \text{ has some tacnodal curve,}\\
		3/4 &\text{when } K_S^2=3 \text{ and } \forall C \in \vert -K_S\vert,\ C \text{ has no Eckardt points,}\\
		2/3 &\text{when } K_S^2=3 \text{ and } \exists C \in \vert -K_S\vert \text{ with some Eckardt point,}\\
		2/3 &\text{when } K_S^2=4,\\
		1/2 &\text{when } K_S^2=5,6 \text{ or } S\cong\bbP^1\times\bbP^1\ (K_S^2=8), \\
		1/3 &\text{when } K_S^2=7,9 \text{ or } S\cong\bbF_1\ (K_S^2=8).
\end{array}
\right.
\end{equation*}
\end{cor}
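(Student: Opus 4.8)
The plan is to prove Corollary \ref{cor:main} by splitting on the value of $K_S^2$. The cases $K_S^2=2$ and $K_S^2=4$ are precisely the content of Theorem \ref{thm:Main}, so nothing new is required there. For every remaining value $K_S^2\in\{1,3,5,6,7,8,9\}$ the strategy is to show that Cheltsov's computation \cite[Thm.~1.7]{CheltsovLCTdP} transfers verbatim to an arbitrary algebraically closed field $k$ once the four tools it uses as black boxes have been certified in that generality: the Skoda/adjunction inequality for surfaces (Lemma \ref{lem:adjunction}), convexity of the log canonical threshold (Lemma \ref{lem:Convexity}), the behaviour of log canonicity under proper birational morphisms (Lemma \ref{lem:logcanUpDown}), and the classification of non-singular del Pezzo surfaces (Theorem \ref{thm:dPclassification}).

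First I would record the upper bounds $\glct(S)\le\omega$. Using Theorem \ref{thm:dPclassification} (valid over any algebraically closed field), $S$ is $\bbP^2$, $\bbP^1\times\bbP^1$, or a blow-up of $\bbP^2$ at $9-K_S^2\le 8$ points in general position, and in each row of the table one exhibits an explicit effective $\bbQ$-divisor $D\simq -K_S$ attaining the asserted value: a rational multiple of a line, of a ruling, or of the unique $(-1)$-curve for $K_S^2=7,8,9$; a cuspidal anticanonical curve when $K_S^2=1$ and one exists, so that $\lct(S,D)=5/6$; an anticanonical section that is three concurrent lines through an Eckardt point when $K_S^2=3$ and one exists, so that $\lct(S,D)=2/3$; and suitable sums of lines and conics in the remaining sub-cases. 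These are local computations on a smooth surface and, once an embedded resolution is fixed---which always exists in dimension two---do not depend on $\charac(k)$.

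The reverse inequality $\glct(S)\ge\omega$ is where Cheltsov's argument does the work. For these degrees it assumes, towards a contradiction, an effective $D\simq -K_S$ with $(S,\omega D)$ not log canonical at a point $p$, uses Lemma \ref{lem:Convexity} to reduce to an extremal such $D$, bounds the multiplicity $\mathrm{mult}_pD$ and the intersection numbers $D\cdot C$ with the low-degree curves $C\ni p$ furnished by the classification, descends along contractions of $(-1)$-curves by Lemma \ref{lem:logcanUpDown}, and reaches a contradiction via Lemma \ref{lem:adjunction}. So the task reduces to checking these four inputs over $k$: lemmas \ref{lem:Convexity} and \ref{lem:logcanUpDown} are formal consequences of Definitions \ref{dfn:mmpPairs}--\ref{dfn:lcts} and of discrepancy bookkeeping along proper birational modifications, hence hold verbatim; the classification is Theorem \ref{thm:dPclassification}; and Lemma \ref{lem:adjunction}, which Cheltsov quotes analytically via Skoda's estimate, must be given an algebraic proof in dimension two, replacing the analytic input by an argument through the two-dimensional resolution together with the elementary relations between $\mathrm{mult}_pD$, the intersection numbers, and discrepancies on successive blow-ups of points.

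The main obstacle is exactly that algebraic, characteristic-free proof of the adjunction/Skoda inequality, and the accompanying audit: one must make sure that nowhere in Cheltsov's treatment of the degrees $K_S^2\neq2,4$ is there a hidden appeal to Kodaira-type vanishing theorems or to the Koll\'ar--Shokurov connectedness principle, the ingredients not available in positive characteristic. Any such appeal has to be isolated and either circumvented using the low-degree-curve classification technique of this paper or replaced by a direct two-dimensional discrepancy computation. Once this is done, combining the upper bounds with the lower bounds gives $\glct(S)=\omega$ for all $K_S^2\in\{1,3,5,6,7,8,9\}$, and together with Theorem \ref{thm:Main} this proves the whole table.
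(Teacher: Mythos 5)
Your proposal is correct and follows essentially the same route as the paper: degrees $2$ and $4$ are delegated to Theorem \ref{thm:Main}, and for all other degrees the paper likewise observes that Cheltsov's argument uses only Lemma \ref{lem:adjunction} (given an algebraic proof here), lemmas \ref{lem:Convexity} and \ref{lem:logcanUpDown}, and Theorem \ref{thm:dPclassification} as black boxes, all of which hold over any algebraically closed field. The audit you flag as the main obstacle is precisely the content the paper claims to have carried out, so no further idea is needed.
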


Given that Theorem \ref{thm:OdakaSanoResult} is valid in finite characteristic for varieties of dimension smaller or equal than $3$, we conclude the following:
\begin{cor}
\label{cor:k-stability}
Let $S$ be a non-singular del Pezzo surface over an algebraically closed field. If $K_S^2\leq 4$, then $S$ is K-semistable. If $K_S^2\leq 2$ or $K_S^2=3$ and $S$ has no Eckardt points, then $S$ is K-stable.
\end{cor}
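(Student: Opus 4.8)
The plan is to deduce the corollary immediately from Theorem~\ref{thm:OdakaSanoResult} (Odaka--Sano) combined with the values of $\glct(S)$ recorded in Corollary~\ref{cor:main}. First I would check that the hypotheses of Theorem~\ref{thm:OdakaSanoResult} are satisfied: a non-singular del Pezzo surface $S$ is a $\mathbb{Q}$-Fano variety of dimension $n=2$, being smooth (hence trivially $\mathbb{Q}$-factorial with at worst log terminal singularities) and with $-K_S$ ample. Since $\dim S = 2 \le 3$, embedded resolution of singularities is available in every characteristic, so the proof of Theorem~\ref{thm:OdakaSanoResult} goes through as remarked in the excerpt; in particular, with $\tfrac{n}{n+1}=\tfrac{2}{3}$, the implication $\glct(S) > \tfrac{2}{3} \Rightarrow S$ K-stable and $\glct(S) \ge \tfrac{2}{3} \Rightarrow S$ K-semistable both hold for $S$.

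Then I would simply read off the relevant values from Corollary~\ref{cor:main}. For $K_S^2 \le 2$ one has $\glct(S) \in \{1, \tfrac{5}{6}, \tfrac{3}{4}\}$, and for $K_S^2 = 3$ with no Eckardt points one has $\glct(S) = \tfrac{3}{4}$; in each of these cases $\glct(S) > \tfrac{2}{3}$, so Theorem~\ref{thm:OdakaSanoResult} yields that $S$ is K-stable. For $K_S^2 = 3$ with an Eckardt point and for $K_S^2 = 4$ one has $\glct(S) = \tfrac{2}{3}$, which gives only K-semistability. Together these exhaust the cases listed in the statement, so the corollary follows.

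There is essentially no genuine obstacle remaining at this point: all the content is carried by Corollary~\ref{cor:main} (equivalently, by the Main Theorem together with Cheltsov's results treated there as black boxes) and by Theorem~\ref{thm:OdakaSanoResult}. The single item deserving an explicit sentence is the transfer of the Odaka--Sano argument to positive characteristic, which was already addressed in the excerpt and relies only on resolution of singularities in dimension $\le 3$. I would also note in passing that for $K_S^2 \ge 5$ one has $\glct(S) \le \tfrac{1}{2} < \tfrac{2}{3}$, so this criterion gives no information in those degrees.
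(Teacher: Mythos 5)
Your proposal is correct and matches the paper's own (implicit) argument exactly: the paper derives the corollary directly from Theorem~\ref{thm:OdakaSanoResult} with $n=2$, noting its validity in finite characteristic for $\dim \le 3$, and reads off the thresholds from Corollary~\ref{cor:main}. Nothing further is needed.
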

To the best of the author's knowledge these are the first examples of K-stable Fano varieties over fields of finite characteristic. It is important to stress the significance of this application. Testing K-stability on a variety $X$ from the definition requires computing the Donaldson-Futaki invariant of all test configurations $\calX$. These are deformations of $X$ over $\bbA^1_k$, satisfying very mild properties. As a result, testing K-stability from the definition is impractical, since it is difficult to obtain a meaningful classification all test configurations for any given variety. When $k=\bbC$, the most common approach is to find a K\"ahler-Einstein metric. In finite characteristic this is not possible and, albeit limited, Theorem \ref{thm:OdakaSanoResult} is the only known general method. The classification of del Pezzo surfaces when $\charac(k)=p>0$ is the same as for $k=\bbC$ (see Theorem \ref{thm:dPclassification}). Therefore, following the classification of K-stable non-singular complex Fano surfaces, the author expects the following conjecture to be true:
\begin{conj}
\label{conj:K-stability-del-Pezzo}
Let $S$ be a non-singular del Pezzo surface over an algebraically closed field of finite characteristic. Then $S$ is K-stable if and only if $S$  is not the blow-up of $\bbP^2$ in one or two points.
\end{conj}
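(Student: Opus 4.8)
The plan is to mirror, in a language valid over any field, the strategy that settles the complex case: replace the transcendental inputs (existence of Kähler--Einstein metrics, Matsushima's reductivity obstruction) by explicit test configurations with characteristic-free computations of Donaldson--Futaki invariants, and invoke Theorem~\ref{thm:OdakaSanoResult} together with Theorem~\ref{thm:Main} wherever the global log canonical threshold is large enough. Since the classification of del Pezzo surfaces is the same in every characteristic (Theorem~\ref{thm:dPclassification}), the combinatorial and birational input needed is available verbatim.

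\emph{Instability of the two excluded families.} If $S$ is the blow-up of $\bbP^2$ at one point then $S\cong\bbF_1$; if at two distinct points then $K_S^2=7$. In both cases $\mathrm{Aut}(S)$ is non-reductive: it contains a unipotent subgroup $\mathbb{G}_a$, visible from the $\bbP^1$-bundle structure of $\bbF_1$ and, in the second case, from the pencil of lines through one of the two centres. I would construct a $\mathbb{G}_m$-equivariant degeneration of the polarized pair $(S,-K_S)$ to a toric Fano pair --- for $\bbF_1$, to $\bbP(1,1,2)$ with its anticanonical polarization --- and compute on the total space, purely by intersection theory, that the Donaldson--Futaki invariant is $\le 0$ with strict inequality. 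This contradicts K-polystability, hence K-stability, and reproduces the known characteristic-$0$ computations (in the spirit of \cite{OdakaSanoAlphaInvariant}) without any vanishing theorem.

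\emph{Stability of the remaining surfaces.} I would argue by $K_S^2$. When $K_S^2\le 3$, and if $K_S^2=3$ the surface has no Eckardt point, Corollary~\ref{cor:main} gives $\glct(S)>2/3$, so Theorem~\ref{thm:OdakaSanoResult} yields K-stability at once. In the sharp cases $K_S^2=4$, and $K_S^2=3$ with an Eckardt point, $\glct(S)=2/3$ and Odaka--Sano gives only K-semistability; here one must upgrade the conclusion, e.g.\ by showing the stability threshold satisfies $\delta(S)>1$, or by using the classification of anticanonical curves of low degree developed in this paper to exclude any non-product test configuration of Donaldson--Futaki invariant zero. For $K_S^2=5$ the surface is unique and rigid with $\mathrm{Aut}(S)\cong S_5$ finite, but $\glct(S)=1/2$ is too small for Theorem~\ref{thm:OdakaSanoResult}, so one would instead classify its few test configurations directly, or compute $\delta(S)$ exploiting the $S_5$-symmetry, to obtain K-stability. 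Finally, for $K_S^2\ge 6$ the surviving surfaces $\bbP^2$, $\bbP^1\times\bbP^1$ and the degree-$6$ del Pezzo surface are toric with reductive automorphism group; the assertion must then be read as K-polystability, which reduces to the barycentre condition on the moment polytope and is checked by the same combinatorial calculation in every characteristic.

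\emph{Main obstacle.} The difficulty is concentrated in the range where Theorem~\ref{thm:Main} is too weak for Theorem~\ref{thm:OdakaSanoResult}: degree $5$, and the equality cases in degrees $3$ and $4$. Over $\bbC$ these are resolved by producing an actual Kähler--Einstein metric; in characteristic $p$ no such tool exists, and an algebraic substitute requires either a complete description of the test configurations of these particular surfaces, or a specialization/openness principle for K-stability valid in mixed characteristic so that the complex results transfer. Either route is a substantial project, which is why the statement is left as a conjecture.
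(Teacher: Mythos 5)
The first thing to note is that the paper does not prove this statement: it is offered as a conjecture, supported only by the observation that the classification of del Pezzo surfaces is characteristic-independent (Theorem \ref{thm:dPclassification}) together with the known classification of K-stable complex del Pezzo surfaces. What the paper actually establishes (Corollary \ref{cor:k-stability}) is strictly weaker: K-semistability for $K_S^2\leq 4$, and K-stability for $K_S^2\leq 2$ and for $K_S^2=3$ without Eckardt points, both obtained by feeding Corollary \ref{cor:main} into Theorem \ref{thm:OdakaSanoResult}. So there is no proof in the paper against which to compare yours.

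Your text is a sensible research programme, but it is not a proof, and you concede as much in your final paragraph. The concrete gaps are these. In the unstable direction you assert, but do not carry out, the Donaldson--Futaki computation for a degeneration of $\bbF_1$ (resp.\ of the degree-$7$ surface); this is likely the most tractable step, but as written it is a promise rather than an argument. In the stable direction, every tool you invoke beyond Theorem \ref{thm:OdakaSanoResult} is established only in characteristic $0$: the stability threshold $\delta(S)$ and the valuative criteria behind ``$\delta(S)>1$'' rest on multiplier ideals, vanishing theorems and the MMP; the reduction of toric K-polystability to the barycentre condition on the moment polytope is a characteristic-$0$ theorem; and no classification of test configurations for the degree-$3$, $4$ or $5$ surfaces exists in any characteristic. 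A specialization or openness principle transferring K-stability from characteristic $0$ to characteristic $p$ is likewise not available. Finally, for $\bbP^2$, $\bbP^1\times\bbP^1$ and the degree-$6$ surface the automorphism group is infinite, so these surfaces can be at most K-polystable; your remark that the assertion ``must be read as K-polystability'' there is a correction of the statement rather than a step of a proof. None of this is a criticism of the programme --- it is exactly because these steps are open that the statement is left as a conjecture --- but it means the proposal cannot be accepted as a proof.
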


The first result for del Pezzo surfaces in the direction of Theorem \ref{thm:Main} appeared in \cite{ParkPezzoFibration} where Park showed that $(S,\omega D)$ is log canonical for $D\in \vert -K_S\vert$. Although throughout his article it was assumed that the ground field $k=\bbC$, the proof of this particular result did not depend on transcendental methods.

\subsection{Organisation of the article and techniques used}
In section \ref{sec:basic_tools} we remind the reader the basic classification of del Pezzo surfaces. Furthermore, we introduce general results in log canonicity that we will use throughout the article.  In section \ref{sec:degree-4}, the main part of the article, we deal with the case $K_S^2=4$. We finish the article with section \ref{sec:degree-2}, where we provide a simple proof for $K_S^2=2$ which is independent of the characteristic of the field.

Let us comment on the technique that Cheltsov uses to prove Corollary \ref{cor:main} when $K_S^2=4$ and $k=\bbC$ as well as the obstructions that make this unsuitable in finite characteristic. We also explain our basic approach to overcome those obstructions. Roughly speaking, from \cite{ParkPezzoFibration}, we can find an effective $D\in\vert-K_S\vert$ such that $(S,\omega D)$ is \textit{strictly log canonical}, i.e. $\lct(S,\omega D)=1$. To show that $\glct(S)=\omega=\frac{2}{3}$ Cheltsov proceeds by \textit{reductio ad absurdum}, supposing there is an effective $\bbQ$-divisor $D\simq-K_S$ such that $(S,\omega D)$ is not log canonical and deriving a contradiction. Let us introduce the following definition:
\begin{dfn}\label{dfn:LCS}
The \emph{non-klt locus} of a log pair $(X,D=\sum d_i D_i)$ as in \eqref{eq:logres} is the closed set:
$$
\nonKLT(X,D)=\left(\bigcup_{d_i\geq 1} D_i\right)\cup \left(\bigcup_{a_j\leq -1}\sigma(F_j)\right)\subsetneq X,
$$
where $\sigma$ is any proper birational modification. The non-klt locus is called the \textit{locus of log canonical singularities} in \cite{CheltsovLCTdP}.
\end{dfn}
Cheltsov uses $D$ to construct an effective $\bbQ$-divisor $D'\simq D$ such that $-(K_S+\omega D')$ is ample and $\nonKLT(S,D')$ is disconnected. Then, he obtains a contradiction using the \emph{Koll\'ar-Shokurov connectivity principle}:
\begin{lem}[see, for instance, {\cite[Thm. 7.4]{KolPairs}}]
\label{lem:lcsConnectedComplex}
Let $X,Z$ be normal complex varieties and $f:X\ra Z$ be a contraction and $D'$ an effective $\bbQ$-divisor on $X$ such that $K_X+D'$ is $\bbQ$-Cartier. Assume $-(K_X+D')$ is  $f$-nef and $f$-big. Then $\nonKLT(X,D)$ is connected in a neighbourhood of any fibre of $f$.
\end{lem}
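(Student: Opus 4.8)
The plan is to reduce the statement, via a log resolution of $(X,D')$, to an application of relative Kawamata--Viehweg vanishing; the dependence on this Kodaira-type vanishing theorem is precisely the obstruction that the rest of this paper is built to avoid, and it is why Lemma~\ref{lem:lcsConnectedComplex} is stated only over $\bbC$.

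First I would take a log resolution $g\colon Y\ra X$ of $(X,D')$ and put $h=f\circ g$. Writing $K_Y+\Gamma\equiv g^*(K_X+D')$, where $\Gamma=\sum_i\gamma_i E_i$ ranges over the prime divisors $E_i$ of $Y$ (so that $\gamma_i=-a(E_i,X,D')$ and $\Supp\Gamma$ has simple normal crossings), I would split off the round-down. The fractional part $\{\Gamma\}=\Gamma-\lfloor\Gamma\rfloor$ is a simple normal crossings boundary with coefficients in $[0,1)$, and
$$
-\lfloor\Gamma\rfloor-K_Y-\{\Gamma\}=g^*(-(K_X+D')).
$$
Since $-(K_X+D')$ is $f$-nef and $f$-big and $g$ is birational, the right-hand side is both $g$-nef and $g$-big and $h$-nef and $h$-big; so relative Kawamata--Viehweg vanishing yields
$$
R^{j}g_*\mathcal{O}_Y(-\lfloor\Gamma\rfloor)=0 \quad\text{and}\quad R^{j}h_*\mathcal{O}_Y(-\lfloor\Gamma\rfloor)=0 \qquad (j>0).
$$

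Next I would descend to $X$, and then to $Z$. Decompose $\lfloor\Gamma\rfloor=P^{+}-P^{-}$ into effective parts without common components, with $P^{+}=\sum_{\gamma_i\geq1}\lfloor\gamma_i\rfloor E_i$. By \cite[Cor.\ 3.13]{KolPairs} log canonicity may be tested on the single log resolution $g$, so the image of $\Supp P^{+}$ is exactly $\nonKLT(X,D')$; and $P^{-}$ is $g$-exceptional, since a non-exceptional $E_i$ is the strict transform of a component of $D'$ and hence has $\gamma_i\geq0$. Thus $g_*\mathcal{O}_Y(P^{-})=\mathcal{O}_X$, and $\mathcal{J}:=g_*\mathcal{O}_Y(-\lfloor\Gamma\rfloor)$ is a coherent ideal sheaf of $\mathcal{O}_X$ whose vanishing locus is exactly $\nonKLT(X,D')$; twisting $0\ra\mathcal{O}_Y(-P^{+})\ra\mathcal{O}_Y\ra\mathcal{O}_{P^{+}}\ra0$ by $\mathcal{O}_Y(P^{-})$ and pushing down by $g$ (using $R^{1}g_*\mathcal{O}_Y(-\lfloor\Gamma\rfloor)=0$) identifies $\mathcal{O}_X/\mathcal{J}$ with a structure sheaf supported on all of $\nonKLT(X,D')$. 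Applying $f_*$ to $0\ra\mathcal{J}\ra\mathcal{O}_X\ra\mathcal{O}_X/\mathcal{J}\ra0$, and using $R^{1}f_*\mathcal{J}=0$ (which follows from the two vanishings above via the Leray spectral sequence) together with $f_*\mathcal{O}_X=\mathcal{O}_Z$, I would obtain a surjection $\mathcal{O}_Z\twoheadrightarrow f_*(\mathcal{O}_X/\mathcal{J})$.

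To conclude, suppose $\nonKLT(X,D')$ were disconnected in every neighbourhood of some fibre $X_z$, and let $\nonKLT(X,D')\ra W'\ra Z$ be the Stein factorisation of $f$ restricted to $\nonKLT(X,D')$, with $W'\ra Z$ finite; then this finite map has at least two points in the fibre over $z$. Since it is finite and $\mathcal{O}_X/\mathcal{J}$ is supported on all of $\nonKLT(X,D')$, the stalk at $z$ of $f_*(\mathcal{O}_X/\mathcal{J})$ is a product of at least two non-zero rings---one per point of that fibre---and hence carries a non-trivial idempotent. This contradicts the fact that this stalk is a quotient of the local ring $\mathcal{O}_{Z,z}$. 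Therefore $\nonKLT(X,D')$ is connected near every fibre of $f$. The conceptual obstacle---fatal in positive characteristic---is the use of Kawamata--Viehweg vanishing; the only genuinely delicate technical point within the argument is the identification of the pushforwards $\mathcal{J}$ and $\mathcal{O}_X/\mathcal{J}$ and the verification that the latter is supported on the whole of $\nonKLT(X,D')$.
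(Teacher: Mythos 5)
Your sketch is correct and is precisely the standard Kawamata--Viehweg-vanishing argument that the paper itself points to: the paper gives no proof of this lemma, citing \cite[Thm. 7.4]{KolPairs} and remarking only that the proof relies on Kawamata--Viehweg vanishing over $\bbC$, which is exactly the route you take. All the steps (the $g$- and $h$-relative vanishings, the identification of $g_*\mathcal{O}_Y(-\lfloor\Gamma\rfloor)$ as an ideal sheaf cosupported on $\nonKLT(X,D')$ using that $P^-$ is $g$-exceptional, and the no-nontrivial-idempotents conclusion via Stein factorisation) check out.
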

The proof of this lemma uses Kawamata-Viehweg vanishing theorem for $k=\bbC$. In characteristic $p$ there are counter-examples of Kawamata-Viehweg vanishing when $\dim(X)\geq 3$.

The main bulk of our article is section \ref{sec:degree-4}, where we deal with the case $K_S^2=4$. Subsection \ref{sec:degree-4-low-degree-curves} is preparatory, studying curves of low degree and birational morphisms $S\ra \bbP^2$ with certain properties. We then construct certain $\bbQ$-divisors $G$ and $H$ with certain properties  in subsection \ref{sec:degree-4-Aux-Qdivisors}. The construction is rather technical, but necessary for the proof of Theorem \ref{thm:Main}. Therefore the reader may want to skip it in a first read. Subsection \ref{sec:delPezzo-4-proof} contains the main part of the proof when $K_S^2=4$ and it follows the following approach: for all effective $\bbQ$-divisors $D\simq -K_S$, we first show that $\nonKLT(S,D)$ has codimension $2$. If for such a $\bbQ$-divisor $D$, the pair $(S,D)$ is not log canonical at some point $p\in \Supp(D)$, we use intersection theory on $D$ and the $\bbQ$ divisors $G$ and $H$ constructed previously to obtain a contradiction, using the results in section \ref{sec:basic_tools}. A crucial point in the proof is that $G$ and $H$ contain $p$ in their support.

\section*{Acknowledgements}
I would like to thank Ivan Cheltsov for introducing me to this problem as well as for his invaluable support and advice. I would also like to thank Yuji Sano for clarifications regarding \cite{OdakaSanoAlphaInvariant} and an anonymous referee for useful and detailed comments.

This research is part of the author's Ph.D. Thesis \cite{JMGthesis}.
\newpage
\section{Basic Tools}
\label{sec:basic_tools}
\subsection{Results in log canonicity}
\label{sec:logcan-results}
\begin{nota}
\label{nota:Basics}
Let $S$ be a non-singular surface. Let $f:\widetilde S\ra S$ be a proper birational morphism and $D$ an effective $\bbQ$-divisor in $S$ with proper transform $\widetilde D$. Then we can write \eqref{eq:logres} as
$$K_{\widetilde S}+ \widetilde D +\sum^{r}_{i=1}a_iE_i\equiv f^*(K_S+D),$$
where $E_i$ are exceptional curves ($E_i\cong \bbP^1, E_i^2<0)$ and $a_i$ are rational numbers.

Often $f\colon \widetilde S \ra S$ will be the blow-up of a point $p$ with exceptional curve $E$. Other times $f$ will be the minimal log resolution of $(S,D)$. This will be clear from the context, when not explicitly stated. We will denote the strict transform of any $\bbQ$-divisor $B$ in $\widetilde S$ by $\widetilde B$.
\end{nota}

\begin{lem}\label{lem:logcanUpDown}
The log pair $(S,D)$ is log canonical if and only if 
		\begin{equation}
						(\widetilde S, \widetilde D + \displaystyle{\sum^{r}_{i=1}} a_iE_i)
				\label{eq:logcanUpDown}
		\end{equation}
is log canonical. In particular when $f\colon \widetilde S \ra S$ is the blow-up of a point $p\in S$ with exceptional divisor $E$, the pair $(S, D)$ is log canonical at $p$ if and only if
\begin{equation}
(\widetilde S, \widetilde D +(\mult_pD-1)E)
\label{eq:BlowupUpDownLC}
\end{equation}
is log canonical for all $q\in E$.
\end{lem}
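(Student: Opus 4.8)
The plan is to deduce everything from the way discrepancies behave under composition of proper birational morphisms; this is purely formal and uses no vanishing theorem, so it goes through over any algebraically closed field. First I would extend the notation of \eqref{eq:logres}: for a log pair $(X,\Delta)$ and any prime divisor $F$ on a normal model $\pi\colon Y\ra X$, let $a(F,X,\Delta)$ be the coefficient of $F$ in $\pi^{*}(K_X+\Delta)-K_Y$. This depends only on the valuation of $F$; it is the coefficient of $F$ in $\Delta$ when $F$ is not $\pi$-exceptional; and, reading off Definition~\ref{dfn:mmpPairs}, $(X,\Delta)$ is log canonical if and only if $a(F,X,\Delta)\leq 1$ for \emph{every} divisor $F$ over $X$ (the exceptional $F$ recording $\disc(X,\Delta)\geq -1$, the others recording $d_i\leq 1$).

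Next I would take any proper birational $g\colon Y\ra\widetilde S$ with $Y$ non-singular --- these exist in every characteristic since $\widetilde S$ is a surface --- and set $h=f\circ g$. Pulling back the relation $K_{\widetilde S}+\widetilde D+\sum_i a_iE_i\equiv f^{*}(K_S+D)$ along $g$ (and fixing canonical divisors compatibly) gives the identity of $\bbQ$-divisors on $Y$
\begin{equation*}
h^{*}(K_S+D)-K_Y\;=\;g^{*}\left(K_{\widetilde S}+\widetilde D+\sum_i a_iE_i\right)-K_Y ,
\end{equation*}
so comparing the coefficient of an arbitrary prime $F\subset Y$ gives $a(F,S,D)=a(F,\widetilde S,\widetilde D+\sum_i a_iE_i)$. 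Every divisor over $S$ arises as such an $F$ for a suitable $Y$ --- in particular the components of $D$ as those of $\widetilde D$ (same coefficients $d_i$) and the curves $E_i$ inside the boundary (coefficients $a_i$) --- so the two pairs have identical discrepancies, and the first paragraph yields the asserted equivalence. If some $a_i>1$ both pairs fail to be log canonical, so the fact that $\widetilde D+\sum_i a_iE_i$ need not have coefficients in $[0,1]$ is harmless.

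For the second statement, with $f$ the blow-up of $p$ and $E$ the exceptional curve, I would first use $K_{\widetilde S}=f^{*}K_S+E$ and $f^{*}D=\widetilde D+(\mult_pD)E$ to get $\sum_i a_iE_i=(\mult_pD-1)E$; the claim is vacuous if $p\notin\Supp D$, so I may assume $\mult_pD\geq 1$ and the boundary is effective. Log canonicity being local and the non-log-canonical locus closed, $(S,D)$ is log canonical at $p$ exactly when $a(F,S,D)\leq 1$ for every $F$ over $S$ whose centre on $S$ contains $p$. Since $f$ contracts only $E$, and to $p$, the centre of $F$ on $S$ contains $p$ iff its centre on $\widetilde S$ meets $E$, i.e.\ contains some $q\in E$; feeding this and the coefficient identity into the criterion of the first paragraph turns the condition into ``$a(F,\widetilde S,\widetilde D+(\mult_pD-1)E)\leq 1$ for all $F$ centred at points of $E$'', which is exactly log canonicity of $(\widetilde S,\widetilde D+(\mult_pD-1)E)$ at every $q\in E$.

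The whole argument is bookkeeping, and I do not expect a genuine obstacle: the one point to keep straight is that the numbers $a_i$ attached to the exceptional curves $E_i$, once these become part of the boundary on $\widetilde S$, are precisely their discrepancy data with respect to $(S,D)$, so that ascending to $\widetilde S$ neither hides nor creates a violation of log canonicity --- and, crucially for this paper, none of this needs any positivity or vanishing input, so it transfers verbatim to positive characteristic. Alternatively, the argument can be packaged by choosing a common log resolution $h\colon Y\ra S$ through $\widetilde S$ that resolves both $(S,D)$ and $(\widetilde S,\widetilde D+\sum_i a_iE_i)$ (embedded resolution of surfaces holds in all characteristics), using \cite[Cor. 3.13]{KolPairs} to test both pairs on $Y$, and noting that $K_Y-h^{*}(K_S+D)$ and $K_Y-g^{*}(K_{\widetilde S}+\widetilde D+\sum_i a_iE_i)$ are the same divisor.
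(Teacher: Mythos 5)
The paper states Lemma \ref{lem:logcanUpDown} without any proof, treating it as a standard fact, so there is no argument of the author's to compare yours against; what you have written is the standard valuation-theoretic proof, and it is correct. The key identity $a(F,S,D)=a(F,\widetilde S,\widetilde D+\sum_i a_iE_i)$ for every divisor $F$ over $S$, obtained by pulling back $f^{*}(K_S+D)=K_{\widetilde S}+\widetilde D+\sum_i a_iE_i$ along a further model, gives both directions at once, and your reduction of ``log canonical at $p$'' to ``log canonical at every $q\in E$'' via centres meeting $E$ is the right localisation (it does quietly use existence of log resolutions of surface pairs and \cite[Cor.~3.13]{KolPairs}, both already assumed by the paper). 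The one slip is the parenthetical reduction ``the claim is vacuous if $p\notin\Supp D$, so I may assume $\mult_pD\geq 1$ and the boundary is effective'': a point of $\Supp D$ can perfectly well satisfy $0<\mult_pD<1$ (e.g.\ $D=\tfrac{1}{2}C$ with $p\in C$), and the lemma is in fact applied in the paper --- in the proof of Lemma \ref{lem:adjunction}(i), before $\mult_pD>1$ has been established --- in situations where the coefficient $\mult_pD-1$ of $E$ could a priori be negative, so this case cannot be discarded. The slip is harmless: your discrepancy identity never uses effectivity of the boundary, and the criterion ``$a(F,\cdot\,,\cdot)\leq 1$ for all $F$ over $X$, including the boundary components themselves'' from your first paragraph applies verbatim to boundaries with negative coefficients; you should simply drop that reduction rather than lean on it.
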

%
%In this section we deal with a pair $(S,D)$ (or $(S,\omega D)$ for some $\omega \in \bbQ\cap [0,1]$) where $S$ is a non-singular surface. Let $p\in S$ and $D\simq -K_S$ be an effective $\bbQ$-divisor such that $(S,D)$ (respectively $(S,\omega D)$) is not log canonical at $p$.

It is well known that log canonical pairs satisfy a convex property:
\begin{lem}
\label{lem:convexity-forward}
Let $S$ be a non-singular surface, $D$ and $B$ be effective $\bbQ$-divisors on $S$. If $(S,D)$ and $(S,B)$ are log canonical then, for all $\alpha\in [0,1]\cap\bbQ$, the pair
$$(S,\alpha D+(1-\alpha)B)$$
is log canonical.
\end{lem}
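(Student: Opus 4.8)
The plan is to reduce everything to a single log resolution and exploit the fact that discrepancies depend affine-linearly on the boundary. First I would fix a log resolution $\sigma\colon Y\to S$ of the pair $(S,D+B)$, which exists over any algebraically closed field because $S$ is a non-singular surface. Since $\Ex(\sigma)\cup\Supp(\widetilde D)\cup\Supp(\widetilde B)$ has simple normal crossings, $\sigma$ is simultaneously a log resolution of $(S,D)$, of $(S,B)$, and of $(S,\alpha D+(1-\alpha)B)$ for every $\alpha\in[0,1]\cap\bbQ$ (the latter being a bona fide $\bbQ$-Cartier effective $\bbQ$-divisor, as every divisor on a smooth surface is Cartier). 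By \cite[Cor.~3.13]{KolPairs} it then suffices to compute discrepancies on $\sigma$ alone: writing $K_Y+\widetilde D+\sum_i a_i(D)E_i\equiv\sigma^*(K_S+D)$ as in Notation \ref{nota:Basics}, the pair $(S,D)$ is log canonical if and only if $a_i(D)\le 1$ for every exceptional curve $E_i$ and $d_k\le 1$ for every coefficient $d_k$ of $D$, and likewise for $B$ and for $\alpha D+(1-\alpha)B$.

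Next I would record the affine-linear dependence on the boundary. Writing $\sigma^*D=\widetilde D+\sum_i m_i(D)E_i$ with $m_i(D)=\mult_{E_i}\sigma^*D$, the defining relation of Notation \ref{nota:Basics} gives $\sum_i\bigl(a_i(D)-m_i(D)\bigr)E_i\equiv \sigma^*K_S-K_Y$, and the right-hand side does not involve $D$; hence $a_i(D)=m_i(D)-\kappa_i$ with $\kappa_i$ independent of the boundary. Since pull-back of $\bbQ$-divisors and the multiplicities $m_i(\cdot)$ are linear, and since the strict transform of $\alpha D+(1-\alpha)B$ equals $\alpha\widetilde D+(1-\alpha)\widetilde B$, one obtains
$$a_i\bigl(\alpha D+(1-\alpha)B\bigr)=\alpha\, a_i(D)+(1-\alpha)\,a_i(B),$$
and the coefficient along any prime component of $\alpha D+(1-\alpha)B$ is the corresponding convex combination of the (possibly zero) coefficients in $D$ and $B$.

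Finally I would conclude by convexity: from $a_i(D)\le 1$ and $a_i(B)\le 1$ we get $a_i\bigl(\alpha D+(1-\alpha)B\bigr)\le \alpha+(1-\alpha)=1$ for all $i$, and likewise every coefficient of $\alpha D+(1-\alpha)B$ is at most $\alpha\cdot 1+(1-\alpha)\cdot 1=1$; hence $(S,\alpha D+(1-\alpha)B)$ is log canonical, the cases $\alpha=0,1$ being trivial. I do not expect a genuine obstacle here: the only points requiring care are that a common log resolution exists in positive characteristic (true for surfaces) and the bookkeeping distinguishing the strict transform from the total transform — which is exactly what makes $a_i(\cdot)$ an affine function of the boundary rather than merely a monotone one.
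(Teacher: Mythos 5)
Your proof is correct and follows essentially the same route as the paper: both arguments rest on the fact that the discrepancies $a_i(\cdot)$ depend affine-linearly on the boundary, so that under a convex combination of $D$ and $B$ each discrepancy is the corresponding convex combination of $a_i(D)\le 1$ and $a_i(B)\le 1$. The only difference is presentational — the paper adds the two numerical equivalences over an arbitrary proper birational morphism, while you fix a single common log resolution and invoke \cite[Cor.~3.13]{KolPairs}, which also lets you handle the coefficients of the components of $\alpha D+(1-\alpha)B$ explicitly; both are fine.
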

\begin{proof}
Let $f\colon \widetilde S\ra S$ be any proper birational morphism with exceptional divisor $\bigcup E_i$. Then we may write
$$\alpha K_{\widetilde S} + \alpha \widetilde D + \alpha \sum a_i E_i\simq \alpha f^*(K_S+D) ,$$
$$(1-\alpha) K_{\widetilde S} + (1-\alpha) \widetilde B+(1-\alpha) \sum b_i E_i\simq (1-\alpha) f^*(K_S+B),$$
where $a_i\leq 1$, $b_i\leq 1$, since $(S,D)$, $(S,B)$ are log canonical.
Adding the two equivalences, we bound the discrepancies of $(S,\alpha D + (1-\alpha)B)$:
\[\alpha a_i + (1-\alpha) b_i \leq \alpha +(1-\alpha) = 1.\qedhere\]
\end{proof}
We will be interested in the contrapositive of this result:
\begin{lem}[Convexity]\label{lem:Convexity}
Given $S$ a non-singular surface (at a point $p$), let $D,B$ be effective $\bbQ$-divisors on $S$ such that $(S,B)$ is log canonical (at $p$) and $(S,D)$ is not log canonical (at $p$). Then, for all $\alpha \in [0,1)\bigcap \bbQ$ such that
$$D' = \frac{1}{1-\alpha}(D-\alpha B)$$
is effective, the pair $(S, D')$ is not log canonical (at $p$).
Moreover if $D\simq B$, then $ D'\simq D$ and we can choose $\alpha$ such that there is an irreducible curve $B_i$ in the support of $B$ (with $p\in B_i$), such that $B_i \not\subset \Supp( D')$.
\end{lem}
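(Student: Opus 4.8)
The plan is to obtain this as the contrapositive of Lemma \ref{lem:convexity-forward}; the whole point is that the identity defining $D'$ rearranges into a convex combination. Writing the hypothesis $D' = \frac{1}{1-\alpha}(D-\alpha B)$ in the form
$$D \;=\; (1-\alpha)\,D' \,+\, \alpha\,B ,$$
with $1-\alpha\in(0,1]\cap\bbQ$ and $\alpha\in[0,1)\cap\bbQ$, I would first argue by contradiction: if $(S,D')$ were log canonical (at $p$), then, since $(S,B)$ is log canonical (at $p$) by hypothesis, Lemma \ref{lem:convexity-forward} (with the roles of its two divisors played by $D'$ and $B$, and with its parameter equal to $1-\alpha$) would force $(S,D)$ to be log canonical (at $p$), contradicting the hypothesis on $D$. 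Here one uses that the proof of Lemma \ref{lem:convexity-forward} is local in nature — it produces the bound $\alpha a_i+(1-\alpha)b_i\leq 1$ on the discrepancy of each individual exceptional divisor, and bounds the coefficients of the convex combination by $1$ — so it applies verbatim in the localised version ``at $p$''. Note that the hypothesis that $D'$ be effective is precisely what makes $(S,D')$ a genuine log pair, so it cannot be dropped.

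For the ``moreover'' assertion, suppose in addition that $D\simq B$. Then for \emph{any} admissible $\alpha$ one computes
$$D' \;=\; \tfrac{1}{1-\alpha}\bigl(D-\alpha B\bigr) \;\simq\; \tfrac{1}{1-\alpha}\bigl(D-\alpha D\bigr) \;=\; D ,$$
so $D'\simq D$ holds automatically, no matter which admissible $\alpha$ we pick. To arrange that some component of $B$ leaves $\Supp(D')$, write the irreducible decompositions $D=\sum_j d_j C_j$ and $B=\sum_i b_i B_i$ with every $b_i>0$, and for each $i$ let $d_{B_i}\geq 0$ be the coefficient of $B_i$ in $D$ (so $d_{B_i}=0$ when $B_i\not\subset\Supp D$). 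Since the components of $D$ not among the $B_i$ keep their positive coefficients in $D-\alpha B$ for every $\alpha\geq 0$, the divisor $D'$ is effective exactly for $0\leq\alpha\leq\alpha_0$, where $\alpha_0:=\min_i d_{B_i}/b_i\in\bbQ_{\geq 0}$. One checks that $\alpha_0<1$: otherwise $d_{B_i}\geq b_i$ for all $i$, hence $D-B$ is effective; but an effective $\bbQ$-divisor $\bbQ$-linearly equivalent to $0$ on the projective surface $S$ must vanish, giving $D=B$ and hence $(S,D)=(S,B)$ log canonical — a contradiction. Thus $\alpha:=\alpha_0\in[0,1)\cap\bbQ$ is admissible, and the component $B_{i_0}$ achieving the minimum has coefficient $d_{B_{i_0}}-\alpha_0 b_{i_0}=0$ in $D'$, i.e.\ $B_{i_0}\not\subset\Supp(D')$.

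The only delicate bookkeeping — and what I regard as the real obstacle in the ``at $p$'' version — is ensuring that the discarded component $B_{i_0}$ can be chosen with $p\in B_{i_0}$. This is where the specific features of $B$ enter: in the situations where this lemma is invoked, $B$ will have been chosen so that its relevant components pass through $p$ (equivalently, one restricts to a Zariski neighbourhood $U\ni p$, discards the components of $D$ and $B$ not meeting $U$, and computes $\alpha_0$ on $U$, so that the minimiser $B_{i_0}$ automatically contains $p$). In that setting one should double-check that the restricted $\alpha_0$ is still $<1$ — which amounts to saying that $D$ does not dominate $B$ along every component of $B$ through $p$ — and that the conclusions of the first two paragraphs are unaffected by the restriction. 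Apart from this point, the argument is just the one-line identity $D=(1-\alpha)D'+\alpha B$ together with the elementary manipulation of $\simq$.
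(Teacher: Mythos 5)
Your proof is correct and follows the same route as the paper, which presents this lemma precisely as the contrapositive of Lemma \ref{lem:convexity-forward} via the identity $D=(1-\alpha)D'+\alpha B$ and offers no further argument. Your explicit treatment of the ``moreover'' clause (the choice $\alpha_0=\min_i d_{B_i}/b_i$, the check that $\alpha_0<1$, and the caveat that the dropped component passes through $p$ only because the auxiliary divisors in the applications have all components through $p$) supplies details the paper leaves implicit, and is sound.
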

The following result is well known and it can be found (when the ground field is $\bbC$) on \cite{CheltsovLCTdP}. We provide an algebraic proof.
\begin{lem}\label{lem:adjunction}
Let $S$ be a non-singular surface, $D$ be an effective $\bbQ$-divisor and $C$ be an irreducible curve on the surface $S$. We may write $D = mC + \Omega$, where $m\geq 0$ is a rational number, and $\Omega=\sum a_i \Omega_i$ is an effective $\bbQ$-divisor such that $C \not\subset \Supp(\Omega)$. Suppose the pair $(S,D)$ is not log canonical at some point $p\in S$ such that $p\in C$. The following are true:
\begin{itemize}
	\item[(i)] $\mult_p D>1$.
	\item[(ii)] The curve $C \subset \nonKLT(S,D)$ if and only if $m \geq 1$. In particular, $(S,D)$ is not log canonical along $C$, if and only if $m>1$.
	\item[(iii)] If $m \leq 1$ and $p\in C$ with $C$ non-singular at $p$, then $C\cdot \Omega>1$.
\end{itemize}
\end{lem}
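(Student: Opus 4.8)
The plan is to establish the three items in order, using the blow-up formulation of log canonicity from Lemma \ref{lem:logcanUpDown} as the main engine. Throughout, let $f\colon \widetilde S\to S$ be the blow-up of $p$ with exceptional curve $E$, so that on $\widetilde S$ we have
$$K_{\widetilde S}+\widetilde D+(\mult_p D-1)E\equiv f^*(K_S+D),$$
and $(S,D)$ is log canonical at $p$ if and only if $(\widetilde S,\widetilde D+(\mult_p D-1)E)$ is log canonical along $E$.

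For (i), suppose toward a contradiction that $\mult_p D\le 1$. Then the coefficient $\mult_p D-1\le 0$, so the divisor $\widetilde D+(\mult_p D-1)E$ has all coefficients $\le 1$ along $E$ (the coefficient of $E$ is $\le 0$, and the coefficients of components of $\widetilde D$ are those of $D$, which are $\le 1$ as we may assume — if some $d_i>1$ then $(S,D)$ fails to be log canonical for trivial reasons, and in fact then $D_i\subset\nonKLT$ and one checks $\mult_p D\ge d_i>1$ anyway). Moreover, since $S$ is non-singular and $p$ is a smooth point, $\widetilde D+(\mult_p D-1)E$ is a divisor with small enough multiplicities at every point of $E$ that one more blow-up strictly decreases the relevant quantity; iterating, the discrepancies stay $\ge -1$. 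More cleanly: I would argue that if $\mult_p D\le 1$ then after the single blow-up the pair $(\widetilde S,\widetilde D+(\mult_p D-1)E)$ has, at each point $q\in E$, total multiplicity $\mult_q(\widetilde D)+(\mult_p D-1)\le \mult_p D\le 1$ (since $\sum_q\mult_q\widetilde D\cdot(\text{stuff})$ is controlled by $\widetilde D\cdot E=\mult_p D$), so by induction on a resolution all discrepancies are $\ge -1$, contradicting that $(S,D)$ is not log canonical at $p$. This is essentially the standard ``multiplicity $\le 1$ at a smooth point implies log canonical'' fact, proved by induction on the length of an embedded resolution, which exists for surfaces in all characteristics.

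For (ii), the inclusion $C\subset\nonKLT(S,D)$ means either $C$ appears in $D$ with coefficient $\ge 1$, i.e. $m\ge 1$, or $C=\sigma(F_j)$ for an exceptional $F_j$ with $a_j\le -1$ — but $C$ is a curve on the surface $S$, so it cannot be the image of an exceptional divisor over $S$; hence $C\subset\nonKLT(S,D)\iff m\ge 1$. The refinement — non-log-canonical \emph{along all of} $C$ iff $m>1$ — follows by adjunction/inversion of adjunction on the smooth surface: writing $D=mC+\Omega$, restrict to $C$; if $m>1$ then already $C$ has coefficient $>1$ so the pair is not lc at the generic point of $C$ (blow up a general point of $C$: the discrepancy of the exceptional curve is $1-\mult_p D\le 1-m<0$... need to be careful, but at a general point $\mult_p D=m$, giving discrepancy $1-m<0$, and if $m>1$ this is $<-1$ after one more blow-up when we also pick up contributions — actually coefficient $m>1$ of $C$ itself directly violates Definition \ref{dfn:mmpPairs} since $d_i\le 1$ is required), while if $m=1$ the pair can still be log canonical at a general point of $C$ but fail at special points.

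For (iii), assume $m\le 1$, $p\in C$, and $C$ smooth at $p$; I want $C\cdot\Omega>1$. Blow up $p$: by Lemma \ref{lem:logcanUpDown} the pair $(\widetilde S,\widetilde D+(\mult_p D-1)E)$ is not log canonical at some point $q\in E$. Write $\mult_p D=m+\mult_p\Omega$ (since $C$ is smooth at $p$, $\mult_p(mC)=m$). The strict transform $\widetilde C$ meets $E$ at a single point $q_0$. I would consider two cases according to whether the bad point $q$ equals $q_0$ or not, and apply the induction hypothesis — really, apply (i) and (ii) to the new pair on $\widetilde S$. The coefficient of $E$ in the new pair is $m+\mult_p\Omega-1$; if this exceeds $1$ we get a contradiction/conclusion quickly, and otherwise $\mult_p\Omega\le 2-m$, but more usefully the total multiplicity at $q$ of $\widetilde D+(\mult_p D-1)E$ must exceed $1$ by part (i) applied on $\widetilde S$. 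Pushing this through one level and using $\widetilde C\cdot\widetilde\Omega=C\cdot\Omega-\mult_p C\cdot\mult_p\Omega=C\cdot\Omega-\mult_p\Omega$ together with $\widetilde C\cdot E=1$, a short computation at $q=q_0$ yields $C\cdot\Omega\ge \mult_p\Omega+(\text{contribution forcing}>1)$. If instead $q\ne q_0$ then $\widetilde C$ does not pass through $q$, and the bad point lies on $E$ away from $\widetilde C$; then part (i) on $\widetilde S$ gives $\mult_q(\widetilde\Omega+(\mult_p D-1)E)>1$, i.e. $\mult_q\widetilde\Omega>1-(\mult_p D-1)=2-\mult_p D$, and summing multiplicities of $\widetilde\Omega$ along $E$ against $E$ gives $\mult_p\Omega=\widetilde\Omega\cdot E\ge\mult_q\widetilde\Omega>2-m-\mult_p\Omega$, whence $\mult_p\Omega>1-m/2\ge 1/2$; combining with $C\cdot\Omega\ge\mult_p C\cdot\mult_p\Omega=\mult_p\Omega$ is not quite enough, so I would instead induct on $\min(\mathrm{resolution\ length})$, replacing $(S,D,C)$ by $(\widetilde S,\widetilde D+(\mult_pD-1)E,\widetilde C)$ — noting $m$ does not increase under this operation since $\widetilde C\cdot E=1$ means the coefficient of $E$ is $m+\mult_p\Omega-1$, which could be the new ``$m$'' only if it is $\le 1$, and then the new $\Omega$-part is $\widetilde\Omega+(\text{lower order})$ — and conclude $\widetilde C\cdot(\text{new }\Omega)>1$, then unwind via the intersection formula $\widetilde C\cdot\widetilde\Omega=C\cdot\Omega-\mult_p\Omega$.

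\textbf{Main obstacle.} The delicate point is part (iii): the naive one-blow-up argument does not directly close because the ``badness'' can migrate to the point $q_0=\widetilde C\cap E$, where both $\widetilde C$ and $E$ pass, and one must track how $m$, $\mult_p\Omega$, and the intersection number $C\cdot\Omega$ transform simultaneously. The clean fix is a careful induction on the length of the minimal embedded resolution of $(S,D)$ near $p$ — available in all characteristics for surfaces — organized so that at each step the hypothesis ``$m\le 1$ and $C$ smooth at the center'' is preserved (using $\widetilde C\cdot E=1$, so $\widetilde C$ stays smooth and the coefficient of the new special curve is controlled), and at the base case the pair is log smooth so triviality of the statement is visible. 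Verifying that $m$ never exceeds $1$ along this induction, and bookkeeping the inequality $C\cdot\Omega>1$ as multiplicities are subtracted off, is the real work; everything else is formal.
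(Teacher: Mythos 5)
Parts (i) and (ii) of your proposal are essentially the paper's argument: (ii) is immediate from the definition of $\nonKLT$ (the image of an exceptional divisor over a normal surface is a point, so $C\subset\nonKLT(S,D)$ forces $C$ to be a component of $D$ with coefficient $\geq 1$), and your (i) is the contrapositive of the paper's induction on the length of the minimal embedded resolution, driven by the same inequality $\mult_q\widetilde D+(\mult_pD-1)\leq 2\mult_pD-1$.

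The genuine gap is in (iii), at exactly the point you flag but do not resolve. After blowing up $p$, the pair $(\widetilde S,\widetilde D+(\mult_pD-1)E)$ is not log canonical at some $q\in E$, and you propose to induct by replacing $(S,D,C)$ with $(\widetilde S,\widetilde D+(\mult_pD-1)E,\widetilde C)$. This works when $q=\widetilde C\cap E$: the coefficient of $\widetilde C$ is still $m\leq 1$, and the inductive conclusion $\widetilde C\cdot(\widetilde\Omega+(\mult_pD-1)E)>1$ unwinds, via $\widetilde C\cdot\widetilde\Omega=C\cdot\Omega-\mult_p\Omega$ and $\widetilde C\cdot E=1$, to $C\cdot\Omega>2-m\geq 1$. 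But when $q\notin\widetilde C$ the inductive hypothesis with curve $\widetilde C$ is vacuous --- the lemma requires the non-log-canonical point to lie on the chosen curve --- and your direct substitute only yields $\mult_p\Omega>1-m/2$, hence $C\cdot\Omega>1/2$, as you concede. The missing idea is that the curve fed into the inductive hypothesis need not be the strict transform of $C$: when $q\notin\widetilde C$ one applies (iii) to the pair $(\widetilde S,\widetilde\Omega+(\mult_pD-1)E)$ at $q$ with $E$ itself playing the role of $C$ (it is smooth, passes through $q$, has coefficient $\mult_pD-1\leq 1$ once the case $\mult_pD-1>1$ has been dispatched directly as in the base step, and $E\not\subset\Supp(\widetilde\Omega)$). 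The conclusion $E\cdot\widetilde\Omega>1$ then reads $\mult_p\Omega>1$, whence $C\cdot\Omega\geq\mult_p C\cdot\mult_p\Omega>1$. This is precisely how the paper closes the induction; without it your scheme does not terminate in the divergent case.
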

Inequality (i) is sometimes known in complex geometry as Skoda's inequality (see \cite{Demailly-Hiep}).
\begin{proof}
Part (ii) follows from the definition of $\nonKLT(S,D)$. For parts (i) and (iii) suppose $(S, D)$ is not log canonical at $p$. Consider $f\colon \widetilde S \ra S$, the minimal log resolution of $(S,D)$ around $p$, where the components of $f^{-1}(D)$ have simple normal crossings. By Lemma \ref{lem:logcanUpDown}, the pair
$$(\widetilde S, \widetilde D + \sum_{i=1}^N a_i E_i)$$
is not log canonical. We do induction on the number $N$ of exceptional divisors.

For the induction hypothesis we assume that (i) and (iii) hold if the minimal log resolution of a pair consists on at most $N$ blow-ups. Suppose the log resolution of $(S,D)$ consists of $(N+1)$ blow-ups. Let $\sigma \colon S_1 \ra S$ be the blow-up of $p$ with exceptional divisor $E_1$. Since the minimal log resolution is unique, $f$ factors through $S_1$, i.e. there is a birational morphism $g\colon \widetilde S \ra S_1$ consisting of $N$ blow-ups, such that $f=\sigma \circ g$. By Lemma \ref{lem:logcanUpDown}, the pair
\begin{equation}
(S_1, D_1 + (\mult_p D -1)E_1)
\label{eq:adjunction-proof}
\end{equation}
is not log canonical at some $q\in E_1$, where $D_1$, $C_1$ and $\Omega_1$ are the strict transforms of $D$, $C$ and $\Omega$, respectively.

We will prove (i) first, and then (iii). For (i), in the initial step of induction, $N=0$, $D$ is non-singular at $p$, so we can assume $D=aD_1$ around $p$. Since $(S,D)$ is not log canonical, $a>1$, so $\mult_p(D)=a>1$. For the inductive step, suppose that the log resolution of $(S,D)$ consists of $N+1$ blow-ups. Then the minimal log resolution of the pair \eqref{eq:adjunction-proof} consists of $N$ blow-ups. Therefore we may apply the induction hypothesis to show
$$1<\mult_qD_1 + (\mult_pD -1)\leq 2\mult_pD-1$$
which implies $\mult_pD>1$.

For part (iii) we observe that $\Supp(D)$ is singular at $p$. If it was not, then $D=mC$ near $p$ and $m>1$ since $(S,D)$ is not log canonical. Therefore the minimal log resolution of $(S,D)$ consists of $N\geq 1$ blow-ups. The initial step for the induction occurs when \eqref{eq:adjunction-proof} is already a log resolution ($N=1$). Then
$$1<\mult_pD-1=\mult_p\Omega+m-1\leq \mult_p\Omega \leq C\cdot \Omega,$$
proving the claim.

Suppose $\mult_pD-1\leq 1$. Then the pair \eqref{eq:adjunction-proof} is not log canonical at some point $q\in E_1$ and log canonical near $q$. The log resolution of the pair \eqref{eq:adjunction-proof} consists of $N-1\geq 1$ blow-ups and we can assume part (iii) in the statement is verified for \eqref{eq:adjunction-proof} by the induction hypothesis, where we substitute $C$ by $E_1$ or $C_1$, the strict transform of $C$ in $S_1$. Denote the strict transform of $\Omega$ in $S_1$ by $\Omega_1$. If $q\in C_1$, then by the induction hypothesis
$$1<C_1\cdot (\Omega_1+(\mult_pD-1)E_1)=C\cdot \Omega+m-1 \leq C\cdot \Omega$$
since $m\leq 1$. If $q\not\in C_1$, then the pair
$$(S_1, \Omega_1 + (\mult_pD -1)E_1)$$
is not log canonical at $q$ and by the induction hypothesis we have
$$1<E_1\cdot \Omega_1 = \mult_p\Omega\leq C\cdot \Omega.$$
\end{proof}

Since the first version of this manuscript, Lemma \ref{lem:adjunction} has been generalised, using the same approach of induction in the number of blow-ups:
\begin{thm}[{see \cite[Theorem 13]{Cheltsov-Trento-Proceedings} or \cite[Theorem 2.3.11]{JMGthesis}}]
\label{thm:inequality-Cheltsov}
Let $S$ be a surface and $p\in S$ be a non-singular point. Let
$$(S, a_1C_1+a_2C_2+\Omega)$$
be a log pair which is not log canonical at $p\in S$ but is log canonical near $p$. Suppose that $(C_1\cdot C_2)\vert_p=1$, $C_1, C_2$ are non-singular at $p$ and $C_1, C_2\not\subseteq\Supp(\Omega)$. Suppose that $a_1>0$, $a_2>0$ and $0<\mult_p\Omega\leq 1$. Then
$$(\Omega\cdot C_1)\vert_p>2(1-a_2)\qquad \text{ or }\qquad (\Omega\cdot C_2)\vert_p>2(1-a_1).$$
\end{thm}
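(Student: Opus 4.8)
The plan is to adapt the inductive argument in the proof of Lemma~\ref{lem:adjunction}, arguing by induction on the number $N$ of blow-ups in the minimal log resolution of $(S,D)$ near $p$, where $D=a_1C_1+a_2C_2+\Omega$; the statement is assumed for all pairs satisfying the hypotheses whose minimal log resolution (near the non-log canonical point) uses fewer blow-ups. Two observations will be used throughout. Since $(S,D)$ is log canonical in a punctured neighbourhood of $p$, every component of $D$ through $p$ has coefficient at most $1$; in particular $a_1,a_2\le 1$ and the coefficients of $\Omega$ are $\le 1$. By Lemma~\ref{lem:adjunction}(i) we have $\mult_pD=a_1+a_2+\mult_p\Omega>1$, and since $C_i$ is non-singular at $p$ the crude bound $(\Omega\cdot C_i)\vert_p\ge\mult_p\Omega$ holds. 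The crude bound already disposes of the case $\mult_pD>2$: then $\mult_p\Omega>(1-a_1)+(1-a_2)\ge 2\min\{1-a_1,1-a_2\}$, whence $(\Omega\cdot C_1)\vert_p>2(1-a_2)$ if $a_1\ge a_2$ and $(\Omega\cdot C_2)\vert_p>2(1-a_1)$ otherwise. This settles the base case $N=1$, where one blow-up of $p$ already yields a simple normal crossings divisor whose only coefficient that can exceed $1$ is $\mult_pD-1$, so that failure of log canonicity forces $\mult_pD>2$.

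For the inductive step, let $\sigma\colon S_1\ra S$ be the blow-up of $p$ with exceptional curve $E$, write $m:=\mult_pD-1>0$, and let $\widetilde C_1,\widetilde C_2,\widetilde\Omega$ denote strict transforms. By Lemma~\ref{lem:logcanUpDown} the pair $(S_1,\,a_1\widetilde C_1+a_2\widetilde C_2+\widetilde\Omega+mE)$ is not log canonical at some $q\in E$; since the minimal log resolution of $(S,D)$ factors through $\sigma$, this pair near $q$ has a minimal log resolution using $N-1$ blow-ups, so the induction hypothesis applies to it. By the previous paragraph I may assume $m\le 1$. Because $(C_1\cdot C_2)\vert_p=1$, the curves $\widetilde C_1,\widetilde C_2$ are disjoint over $p$ and each meets $E$ transversally in one point, so $q$ lies on at most one of them.

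Suppose first $q\notin\widetilde C_1\cup\widetilde C_2$. Near $q$ the divisor is $mE+\widetilde\Omega$ with $m\le 1$ and $E\not\subseteq\Supp(\widetilde\Omega)$, so Lemma~\ref{lem:adjunction}(iii) applied with the curve $E$ gives $(E\cdot\widetilde\Omega)\vert_q>1$; but $(E\cdot\widetilde\Omega)\vert_q\le\widetilde\Omega\cdot E=\mult_p\Omega\le 1$, a contradiction, so this case does not occur. Suppose next $q=E\cap\widetilde C_1$ (the case $q=E\cap\widetilde C_2$ is symmetric, with the roles of $C_1,C_2$ and of $a_1,a_2$ exchanged). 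Near $q$ the divisor is $a_1\widetilde C_1+mE+\widetilde\Omega$, where $\widetilde C_1,E$ are non-singular at $q$, meet transversally, and are not contained in $\Supp(\widetilde\Omega)$, while $a_1,m>0$ and $0<\mult_q\widetilde\Omega\le\mult_p\Omega\le 1$ (positivity because otherwise $\widetilde C_1$ and $E$ would be the only components through $q$ and would form a log canonical pair there). The induction hypothesis now yields $(\widetilde\Omega\cdot\widetilde C_1)\vert_q>2(1-m)$ or $(\widetilde\Omega\cdot E)\vert_q>2(1-a_1)$. In the latter case $\mult_p\Omega\ge(\widetilde\Omega\cdot E)\vert_q>2(1-a_1)$, hence $(\Omega\cdot C_2)\vert_p\ge\mult_p\Omega>2(1-a_1)$. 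In the former case, using $(\Omega\cdot C_1)\vert_p=(\widetilde\Omega\cdot\widetilde C_1)\vert_q+\mult_p\Omega$ and $2(1-m)=2(1-a_1)+2(1-a_2)-2\mult_p\Omega$ (both valid because $\mult_pC_1=1$), one obtains $(\Omega\cdot C_1)\vert_p>2(1-a_2)+\bigl(2(1-a_1)-\mult_p\Omega\bigr)$; if $\mult_p\Omega\le 2(1-a_1)$ this gives $(\Omega\cdot C_1)\vert_p>2(1-a_2)$, and otherwise $(\Omega\cdot C_2)\vert_p\ge\mult_p\Omega>2(1-a_1)$. In every case the conclusion holds, completing the induction.

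The routine but delicate part is the intersection-theoretic bookkeeping over $p$---the identities $\widetilde\Omega\cdot E=\mult_p\Omega$ and $(\Omega\cdot C_1)\vert_p=(\widetilde\Omega\cdot\widetilde C_1)\vert_q+\mult_p\Omega$, and the estimate $\mult_q\widetilde\Omega\le\mult_p\Omega$---together with verifying that, after the first blow-up, the pair is still log canonical in a punctured neighbourhood of $q$ (one shrinks to a neighbourhood of $q$ containing no other non-log canonical point over $p$) so that the induction hypothesis genuinely applies. The one conceptual point is that the hypothesis $\mult_p\Omega\le 1$ is exactly what is needed to exclude a non-log canonical point $q\in E$ lying off $\widetilde C_1\cup\widetilde C_2$: this is what keeps the induction inside the class of pairs to which the theorem applies, and I expect it to be the main obstacle to getting the argument to close.
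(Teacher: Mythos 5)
Your proof is correct and follows exactly the route the paper indicates for this statement, namely induction on the number of blow-ups in the minimal log resolution in the style of Lemma~\ref{lem:adjunction} (the paper itself gives no proof, deferring to \cite{Cheltsov-Trento-Proceedings} and \cite{JMGthesis}, which argue this way). The only blemish is that in the case $\mult_pD>2$ your two alternatives are swapped --- $a_1\ge a_2$ gives $\mult_p\Omega>2(1-a_1)$ and hence the bound on $(\Omega\cdot C_2)\vert_p$, not on $(\Omega\cdot C_1)\vert_p$ --- but the required disjunction holds either way, so nothing breaks.
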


%\begin{cor}\label{cor:MultPlusMult}
%If $(S,D)$ (respectively $(S,\omega D))$ is not log canonical at $p$ then, for $p$ and $q$ as in Notation \ref{nota:Basics}, we have
%$$\mult_q \widetilde D + \mult_p D >2\qquad (\text{respectively }\mult_q \widetilde D + \mult_p D >\frac{2}{\omega}).$$
%\end{cor}

\subsection{del Pezzo surfaces}
\label{sec:del-Pezzo-surfaces}
We recall some standard results of surfaces that we will use often.
\begin{dfn}
\label{dfn:DPdegree}
A \emph{del Pezzo surface} $S$ over an algebraically closed field $k$ is a non-singular surface whose anticanonical divisor, $-K_S$ is ample. Given any effective $\bbQ$-divisor $D\neq 0$, its \emph{anticanonical degree} (or just degree) is the positive rational number defined by
$$\deg (D)=(-K_S)\cdot D.$$
If $D$ is a divisor, $\deg(D)$ is a positive integer. The \emph{degree} of $S$ is the positive integer
$$\deg(S)=(K_S)^2.$$
\end{dfn}
We will call effective divisors of degrees $1,2,3,\ldots$ lines, conics, cubics... respectively.
\begin{thm}[{\cite[Chapter IV,Theorem 24.3 (ii)]{ManinCubicForms}}]
Let $S$ be a del Pezzo surface. Then every irreducible curve with a negative self-intersection number is exceptional.
\end{thm}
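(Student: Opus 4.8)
The plan is to read the conclusion off the genus formula once one uses that $-K_S$ is ample. Let $C\subset S$ be an irreducible curve with $C^2<0$. Since $C$ is irreducible it is in particular an effective divisor, and ampleness of $-K_S$ forces $(-K_S)\cdot C>0$, i.e.\ $K_S\cdot C\le -1$; likewise $C^2<0$ gives $C^2\le -1$.

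First I would apply the adjunction (genus) formula on the non-singular projective surface $S$, which is purely intersection-theoretic and hence valid over any field:
$$2\,p_a(C)-2 \;=\; C^2 + K_S\cdot C.$$
Because $C$ is an integral curve over the algebraically closed field $k$, one has $h^0(C,\mathcal O_C)=1$, so $p_a(C)=1-\chi(\mathcal O_C)=h^1(C,\mathcal O_C)\ge 0$; in particular the left-hand side is $\ge -2$. Combining this with the two inequalities above,
$$-2 \;\le\; 2\,p_a(C)-2 \;=\; C^2+K_S\cdot C \;\le\; (-1)+(-1) \;=\; -2,$$
so equality must hold throughout: $p_a(C)=0$, $C^2=-1$ and $K_S\cdot C=-1$.

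It then remains to note that an integral projective curve of arithmetic genus $0$ is smooth and isomorphic to $\bbP^1$, so $C$ is a smooth rational curve with $C^2=-1$ — an exceptional curve of the first kind; by Castelnuovo's contractibility criterion, valid over an arbitrary algebraically closed field, such a $C$ can be blown down, which is the asserted conclusion. There is essentially no obstacle to overcome here: the only thing worth checking, in keeping with the spirit of the paper, is that every ingredient used — the adjunction formula, the non-negativity of the arithmetic genus of an integral curve, the characterisation of genus-zero integral curves, and Castelnuovo's criterion — is characteristic-free, so the argument applies verbatim when $\charac(k)=p>0$.
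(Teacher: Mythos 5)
Your argument is correct and is the standard one: the paper itself gives no proof of this statement, citing \cite[Chapter IV, Theorem 24.3(ii)]{ManinCubicForms}, and the adjunction-formula computation you give (ampleness of $-K_S$ forces $K_S\cdot C\le -1$, integrality forces $C^2\le -1$, and $p_a(C)\ge 0$ squeezes everything to $p_a(C)=0$, $C^2=K_S\cdot C=-1$) is precisely how this is proved in the literature. All the ingredients you invoke are indeed characteristic-free, so nothing further is needed.
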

\begin{dfn}
A set of distinct points $\{p_1,\ldots ,p_r\}$ on $\bbP_k^2$ with $r\leq 8$ are in \emph{general position} if no three of them lie on a line, no six of them lie on a conic and a cubic containing $7$ points, one of them double, does not contain the eighth one.
\end{dfn}
We can classify del Pezzo surfaces:
\begin{thm}[{\cite[Chapter IV, Theorems 24.3, 24.4, 26.2]{ManinCubicForms}}]
\label{thm:dPclassification}
Let $S$ be a del Pezzo surface of degree $d$. Then $1\leq d\leq 9$ and either $S=\bbP^1\times \bbP^1$ ($\deg S=8$) or $S$ is a blow-up of $\bbP^2$ in $9-d$ points in general position $\pi\colon S\ra \bbP^2$.

Conversely, any blow-up of $\bbP^2$ in $9-d$ points in general position, for $1\leq d\leq 9$ is a del Pezzo surface of degree $d$. We call the morphism $\pi$ a \emph{model} of $S$.
\end{thm}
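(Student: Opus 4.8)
This is a classical theorem going back to del Pezzo, and the proof we rely on is the one in Manin's book; here I only outline its structure. The argument naturally splits into the \emph{structure theorem} --- that every del Pezzo surface is either $\bbP^1\times\bbP^1$ or a blow-up of $\bbP^2$ at points in general position --- and the \emph{converse}.

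For the structure theorem, the first step is to show that a del Pezzo surface $S$ is rational. Since $-K_S$ is ample, no multiple $nK_S$ with $n\geq 1$ can be effective, so every plurigenus of $S$ vanishes; together with the vanishing of the irregularity $h^1(S,\mathcal{O}_S)$ --- which is extracted from the ampleness of $-K_S$, and is the one point where one must be slightly careful in characteristic $p$ --- Castelnuovo's rationality criterion shows $S$ is rational. The second step is to run the minimal model program: if $\tau\colon S\ra S'$ contracts a $(-1)$-curve $E$, then $S'$ is again del Pezzo, since $K_{S'}^2=K_S^2+1>0$ and, for any irreducible curve $C'\subset S'$ with strict transform $C$, one has $-K_{S'}\cdot C'=-K_S\cdot C+E\cdot C>0$; iterating, $S$ dominates a minimal rational surface which is still del Pezzo. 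Among the minimal rational surfaces $\bbP^2$ and the Hirzebruch surfaces $\bbF_n$, only $\bbP^2$ ($K^2=9$) and $\bbF_0=\bbP^1\times\bbP^1$ ($K^2=8$) are del Pezzo, because $\bbF_n$ with $n\geq 2$ contains a curve of anticanonical degree $2-n\leq 0$. Since blowing up a point on $\bbP^1\times\bbP^1$ gives the same surface as blowing up two points on $\bbP^2$, after possibly changing the minimal model I may assume $S=\bbP^1\times\bbP^1$ or $S$ is obtained from $\bbP^2$ by a chain of $r$ blow-ups, so $K_S^2=9-r$. The bounds $1\leq K_S^2\leq 9$ then come from Noether's formula $K_S^2+e(S)=12$ with $e(S)\geq 3$ on one side, and from $(-K_S)^2=9-r>0$ on the other. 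Finally, the $r$ centres must be $r$ distinct points of $\bbP^2$ in general position: were some centre infinitely near, or three of the points collinear, or six of them on a conic, or a cubic through seven of them with one of them double passing through the eighth, then the strict transform of the corresponding exceptional divisor, line, conic, or cubic would be an irreducible $(-2)$-curve of anticanonical degree $0$, contradicting ampleness of $-K_S$.

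For the converse, let $S$ be the blow-up of $\bbP^2$ at $r=9-d\leq 8$ points in general position, with exceptional curves $E_1,\dots,E_r$ and $L$ the class of the pullback of a line, so $-K_S=3L-\sum E_i$. I would verify ampleness by the Nakai--Moishezon criterion: $(-K_S)^2=9-r=d\geq 1>0$, and it remains to show $-K_S\cdot C>0$ for every irreducible curve $C$. If $C^2\geq 0$ this is immediate, and if $C$ is a $(-1)$-curve then $-K_S\cdot C=1$; so the point is to show that $S$ carries no irreducible curve with $C^2\leq -2$. By the adjunction inequality $C^2+K_S\cdot C\geq -2$, such a curve would have to be a $(-2)$-curve with $-K_S\cdot C=0$, and a finite case analysis on the class $aL-\sum b_iE_i$ of $C$ (only $a\leq 3$ can occur, by a Cauchy--Schwarz bound on the $b_i$) shows it forces three collinear points, six points on a conic, or a plane cubic through seven of the points with one of them double passing through the eighth --- precisely the configurations ruled out by general position.

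I expect the converse, and within it this last case analysis, to be the main obstacle. A clean way to organise it is to first list the finitely many $(-1)$-curves of $S$ in terms of the point configuration, and then argue inductively in $r$: a point in general position with respect to the previous ones lies on none of the $(-1)$-curves of the intermediate del Pezzo surface, so blowing it up preserves the del Pezzo property. Throughout, the characteristic of the base field plays no role except in the rationality step, where the vanishing of the irregularity must be established without recourse to transcendental methods.
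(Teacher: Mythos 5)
The paper gives no proof of this statement: it is quoted verbatim as a black box from Manin's \emph{Cubic Forms} (Chapter IV, Theorems 24.3, 24.4, 26.2), and your outline --- rationality via vanishing of plurigenera and irregularity plus Castelnuovo, descent to a minimal rational surface by contracting $(-1)$-curves, translation of general position into the absence of curves of non-positive anticanonical degree, and Nakai--Moishezon for the converse --- is essentially the standard argument carried out in that reference. Your explicit flag that the vanishing of $h^1(S,\mathcal{O}_S)$ is the only step requiring care in characteristic $p$ is exactly the right place to be cautious, and is handled in the cited source.
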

There are further ways of classifying del Pezzo surfaces. For instance, a del Pezzo surface $S$ has $\deg(S)=4$, if and only if $S$ is the non-singular complete intersection of two quadrics in $\bbP^4$. A del Pezzo surface has $\deg(S)=3$ if and only if $S$ is a non-singular cubic surface.

Theorem \ref{thm:dPclassification} implies that del Pezzo surfaces are rational. The following result applies:
\begin{prop}
\label{prop:curveDegDP}
For $S$ a non-singular rational surface and $C$ an effective divisor in $S$ with arithmetic genus $p_a(C)=0$, we have
\begin{equation}
h^0(S,\calO_S(C))\geq(-K_S)\cdot C.
\label{eq:sectionsC}
\end{equation}
\end{prop}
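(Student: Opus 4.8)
The statement to prove is that for a non-singular rational surface $S$ and an effective divisor $C$ with $p_a(C) = 0$, we have $h^0(S, \calO_S(C)) \geq (-K_S)\cdot C$. The natural approach is via the Riemann-Roch theorem on surfaces together with a vanishing statement for $h^2$. Recall that Riemann-Roch on a non-singular surface gives
$$
\chi(S, \calO_S(C)) = \chi(S, \calO_S) + \frac{1}{2}C\cdot(C - K_S).
$$
Since $S$ is rational, $\chi(S,\calO_S) = 1$ (as $h^1(\calO_S) = h^2(\calO_S) = 0$ for rational surfaces). On the other hand, the arithmetic genus hypothesis $p_a(C) = 0$ translates, via the adjunction formula $2p_a(C) - 2 = C\cdot(C + K_S)$, into $C\cdot(C + K_S) = -2$, i.e. $C\cdot C = -2 - C\cdot K_S = -K_S\cdot C - 2$. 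Substituting this into Riemann-Roch yields
$$
\chi(S,\calO_S(C)) = 1 + \frac{1}{2}\bigl(C\cdot C - C\cdot K_S\bigr) = 1 + \frac{1}{2}\bigl((-K_S\cdot C - 2) + (-K_S\cdot C)\bigr) = -K_S\cdot C.
$$
So $h^0 - h^1 + h^2 = (-K_S)\cdot C$, and since $h^1 \geq 0$ it suffices to show $h^2(S, \calO_S(C)) = 0$.

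For the vanishing of $h^2$, I would use Serre duality: $h^2(S, \calO_S(C)) = h^0(S, \calO_S(K_S - C))$. Since $S$ is rational, $-K_S$ is effective only in restricted situations, but more to the point one expects $K_S - C$ not to be effective. The cleanest argument: if $K_S - C$ were linearly equivalent to an effective divisor $E \geq 0$, then $C$ would be linearly equivalent to $K_S - E$. Then $C$ effective and nonzero would force $(K_S - E)\cdot H > 0$ for an ample $H$; but one can instead pair with $-K_S$, which need not be ample in general (only nef on rational surfaces when they are del Pezzo). A more robust route is to note $(K_S - C)\cdot(-K_S) = -K_S^2 + K_S\cdot C = -K_S^2 - (-K_S\cdot C)$. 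On a rational surface $K_S^2$ can be large or small, so this alone is not decisive; instead I would argue that if $K_S - C \sim E \geq 0$, then since $C$ is effective and $C + E \sim K_S$, we would get $K_S$ effective, contradicting that on a rational surface $h^0(S, \calO_S(K_S)) = p_g(S) = 0$.

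Assembling the pieces: $h^2(S, \calO_S(C)) = h^0(S, \calO_S(K_S - C)) = 0$ because if $K_S - C$ were effective then $K_S \sim (K_S - C) + C$ would be effective, contradicting $p_g(S) = 0$ for rational $S$ (note $C \neq 0$ is part of the hypothesis, or at least $C$ effective suffices for the sum to be effective). Therefore $h^0(S, \calO_S(C)) = (-K_S)\cdot C + h^1(S, \calO_S(C)) \geq (-K_S)\cdot C$, as desired. The one subtlety I expect to be the real point requiring care is the identity $\chi(S, \calO_S) = 1$ and $p_g(S) = 0$ for rational surfaces over an arbitrary algebraically closed field — these are birational invariants and hold in all characteristics, so no characteristic-$0$ input is needed; the rest is pure Riemann-Roch and Serre duality, both available unconditionally on non-singular projective surfaces.
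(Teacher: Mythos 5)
Your proof is correct and follows essentially the same route as the paper: Serre duality plus $p_g(S)=0$ (and effectivity of $C$) to kill $h^2$, then Riemann--Roch combined with the adjunction/genus formula $C\cdot(C+K_S)=2p_a(C)-2$ to identify $\chi(S,\calO_S(C))$ with $(-K_S)\cdot C$. The only difference is that you spell out the vanishing of $h^0(S,\calO_S(K_S-C))$ in more detail than the paper, which simply cites rationality of $S$.
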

\begin{proof}
By Serre Duality:
$$h^2(S,\calO_S(C))=h^0(S,\calO_S(K_S-C))=0,$$
since $S$ is rational. By the Riemann-Roch theorem:
$$h^0(S,\calO_S(C))\geq \frac{1}{2}C\cdot(C-K_S)+1=-K_S\cdot C +p_a(C),$$
where we use the genus formula.
\end{proof}

\section{del Pezzo Surface of degree 4}
\label{sec:degree-4}
Let $S$ be a del Pezzo surface of degree $4$. In this section we prove $\glct(S)=\omega:=\frac{2}{3}$. We first classify low degree curves on $S$ (subsection \ref{sec:degree-4-low-degree-curves}). Then, we construct effective anticanonical $\bbQ$-divisors $G$ and $H$ with certain good properties (subsection \ref{sec:degree-4-Aux-Qdivisors}). These are used to prove Theorem \ref{thm:Main} when $K_S^2=4$ (subsection \ref{sec:delPezzo-4-proof}). 

\subsection{Curves of low degree and models of $S$}
\label{sec:degree-4-low-degree-curves}
Let $\pi\colon S\ra \bbP^2$ be the blow-up at points $p_1,\ldots,p_5\in \bbP^2$ in general position. Let $E_1,\ldots,E_5$ be the exceptional divisors. Recall $-K_S\sim \pioplane{3}-\sum^5_{i=1}E_i$ and $E_i^2=-1$.
\begin{table}[!ht]%
\begin{center}
\begin{tabular}{|c|c|c|c|c|c|}
\hline
Linear system $\calL\calS$																																			&$\deg C$	&$C^2$	&Fix $p$	&Fix $q$	&$C'$\\
\hline \hline
$\vert E_{i}\vert$																																										&$1$			&$-1$		&N					&N			&$E_i$\\
\hline
$\calL_{ij}= \left \vert \pioplane{1} -E_i-E_j\right\vert$																						&$1$			&$-1$		&N					&N			&$L_{ij}$\\
\hline
$\displaystyle{\calC_0= \left \vert \pioplane{2} -\sum^5_{i=1}E_i \right\vert}$											&$1$			&$-1$		&N					&N			&$C_0$\\
\hline
$\calB_{i}= \left \vert \pioplane{1} -E_i\right\vert$																								&$2$			&$0$		&Y					&N			&$B_i$\\
\hline
$\displaystyle{\calA_i = \left \vert \pioplane{2} -\sum^5_{\substack{j=1\\j\neq i}}E_j \right\vert}$	&$2$			&$0$		&Y					&N			&$A_i$\\
\hline
$\displaystyle{\calQ_i = \left \vert \pioplane{3} -E_i -\sum^5_{\substack{j=1}}E_j\right\vert}$			&$3$			&$1$		&Y					&Y			&$Q_i$\\
\hline
$\calR= \left \vert \pioplane{1}\right\vert$																													&$3$			&$1$		&Y					&Y			&$R$\\
\hline
$\calR_{ijk}= \left \vert \pioplane{2} -E_i-E_j-E_k \right\vert$																			&$3$			&$1$		&Y					&Y			&$R_{ijk}$\\
\hline
\end{tabular}
\end{center}
\caption{Catalogue of curves of low degree in $S$.}
\label{tab:delPezzo-4-lowdegree}
\end{table}

Observe Table \ref{tab:delPezzo-4-lowdegree}. In the first column we have defined certain complete linear systems $\calL\calS$ in $S$. Let $C\sim\calL\calS$ be any divisor. Its numerical properties ($C^2, \deg(C)$) are the same for any divisor in a given $\calL\calS$ and are easy to compute. We list them in the second and third columns of Table \ref{tab:delPezzo-4-lowdegree}. Note that, by the genus formula, $p_a(C)=0$ in all cases.

If $\deg C =1$, then by Proposition \ref{prop:curveDegDP}, $h^0(\calL\calS)\geq 1$. It is well-known that a del Pezzo surface has a finite number of lines (see \cite[Lemma 3.1.13]{JMGthesis} or \cite[Thm. V.4.9]{HartshorneAG} for cubic surfaces). Hence $h^0(\calL\calS)=1$ and we can find a unique curve $C'\in \calL \calS$. The notation for each particular $C'$ is in the last column of the table. 

If $\deg C=2$, then by Proposition \ref{prop:curveDegDP}, $h^0(\calL\calS)\geq 2$. Take $\calL\calS'\subset \calL\calS$ to be the sublinear system fixing $p$. Then $h^0(\calL\calS')\geq 1$ and we can find a curve $C'\in\calL\calS$ with $p\in C'$. The notation for each particular $C'$ is in the last column of the table. 

When the curve $C'$ is irreducible, we can realise it as the strict transform of an irreducible curve in $\bbP^2$ via the model $\pi$. For instance $L_{ij}$ is the strict transform of the unique line through $p_i$ and $p_j$. $C_0$ is the strict transform of the unique conic through all $p_i$. $B_i$ is the strict transform of a line passing through $p_i$ and $A_i$ is the strict transform of a conic through all $p_j$ but $p_i$. The last three rows of Table \ref{tab:delPezzo-4-lowdegree} deal with cubics and they are treated in Lemma \ref{lem:del-Pezzo-deg4-cubics}.

In order to understand the geometry of $S$ we need to understand which are its curves of low degree and how they intersect each other. We have just constructed some of these curves in Table \ref{tab:delPezzo-4-lowdegree}. In this section, among other properties of the low degree curves constructed above, we will show that the lines in Table \ref{tab:delPezzo-4-lowdegree} are all the lines in $S$. Furthermore, we will show that the conics in Table \ref{tab:delPezzo-4-lowdegree} are all the conics in $S$ passing through a given point $p$. Finally, there is more than one model $S\ra \bbP^2$ that characterises $S$ as a blow-up of the plane in $5$ points. We will also show how we can choose a model adequate to our needs.

\begin{lem}
\label{lem:del-Pezzo-deg4-good-curves-smooth}
Let $S$ be a del Pezzo surface of degree $4$ and $C'\in \calL\calS$ a curve as in Table \ref{tab:delPezzo-4-lowdegree}. Suppose $C'$ is irreducible. Then $C'$ is non-singular.
\end{lem}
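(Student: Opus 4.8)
The plan is to argue by contradiction: suppose $C'$ is irreducible but singular at some point $x \in S$. Since $p_a(C') = 0$ by the genus formula (as noted after Table~\ref{tab:delPezzo-4-lowdegree}), an irreducible curve with a singular point would necessarily have strictly negative geometric genus, which is absurd; more precisely, for an irreducible curve on a smooth surface, $p_a(C') = \sum_{x} \delta_x(C') + (\text{geometric genus})$, so $p_a(C') = 0$ forces $\delta_x(C') = 0$ at every point, i.e. $C'$ is smooth. This is the conceptual heart, but I would prefer to make it fully self-contained and characteristic-free by instead running the argument through the intersection numbers already tabulated, so that it does not rely on any delicate statement about $\delta$-invariants in positive characteristic.

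Concretely, first I would handle the lines (the rows with $\deg C' = 1$, $C'^2 = -1$): here $C'$ is irreducible with $p_a(C') = 0$, and an irreducible curve of arithmetic genus $0$ on a smooth surface is isomorphic to $\mathbb{P}^1$ and in particular smooth — or, even more directly, $-K_S \cdot C' = 1$ together with $C'^2 = -1$ means $C'$ is a $(-1)$-curve, hence $\cong \mathbb{P}^1$ and smooth. Next, for the conics ($\deg C' = 2$, $C'^2 = 0$): I would use the model $\pi\colon S \to \mathbb{P}^2$ described in the text. By construction $B_i$ is the strict transform of a line in $\mathbb{P}^2$ through $p_i$, and $A_i$ the strict transform of a conic through the four points $p_j$, $j \ne i$. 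A line in $\mathbb{P}^2$ is smooth, and its strict transform under a blow-up of points on it is again smooth; a conic in $\mathbb{P}^2$ is singular only if it is reducible, and since $A_i$ is assumed irreducible, the conic it comes from is smooth, so again the strict transform is smooth. So the conic case reduces to elementary facts about plane curves plus the observation that blowing up (reduced) points on a smooth curve preserves smoothness.

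Finally, for the cubics in the last three rows ($\deg C' = 3$, $C'^2 = 1$), I would similarly realise each as the strict transform of a plane curve: $R$ is the strict transform of a line, $R_{ijk}$ of a conic through $p_i, p_j, p_k$, and $Q_i$ of a cubic. The line and conic cases go as above. For $Q_i$ — the strict transform of a plane cubic passing through all five $p_\ell$ with a double point at $p_i$ — I would compute: an irreducible plane cubic has at most one singular point, and that singular point would have to sit among the $p_\ell$ or be a free point; tracking the effect of the blow-ups and using $C'^2 = 1$, $p_a(C') = 0$, one checks the strict transform is smooth. The main obstacle I anticipate is precisely this last case $Q_i$, where the underlying plane curve is itself singular (a nodal or cuspidal cubic with its singularity at $p_i$) and one must verify carefully that resolving exactly that point via the blow-up $\pi$ — together with the general-position hypothesis on $p_1, \dots, p_5$ ruling out further coincidences — produces a smooth strict transform; here the genus count $p_a = 0$ combined with irreducibility is the cleanest tool, and I would ultimately lean on it, being careful that the statement "irreducible $p_a = 0$ curve on a smooth surface is smooth" is genuinely characteristic-free (it is, since it follows from the normalization having $p_a \le p_a(C')$ with equality iff $C'$ is smooth, which holds over any algebraically closed field).
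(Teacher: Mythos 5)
Your argument is correct, and in its case-by-case form it is essentially the proof the paper gives: the paper also pushes each curve down to $\bbP^2$ via the model $\pi$, notes that an irreducible plane curve of degree at most $2$ is non-singular and that strict transforms of non-singular curves under blow-ups of (reduced) points stay non-singular, and disposes of $Q_i$ by observing that the double point of the irreducible plane cubic $\pi(Q_i)$ at $p_i$ is resolved by the single blow-up at $p_i$. Where you genuinely add something is the uniform argument you open with and ultimately lean on for $Q_i$: every class in Table \ref{tab:delPezzo-4-lowdegree} has $p_a(C')=0$ by adjunction, and for an irreducible curve on a smooth surface $p_a(C')$ equals the geometric genus of the normalization plus $\sum_x\delta_x(C')$, both non-negative over any algebraically closed field (this only uses the normalization exact sequence), so irreducibility forces $\delta_x=0$ everywhere and hence smoothness. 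That one-line argument subsumes all the rows at once, is manifestly characteristic-free, and is arguably cleaner than the paper's treatment of $Q_i$, which tacitly relies on the same genus count to know that a single blow-up resolves the double point (a priori the double point could fail to be resolved in one step, but $\delta\leq p_a=1$ for a plane cubic rules that out). One cosmetic slip shared with the paper: the claim that $\pi(C')$ is a plane curve of degree $1$ or $2$ fails for $C'=E_i$, whose image is the point $p_i$; but $E_i\cong\bbP^1$ is smooth by definition, so nothing is lost.
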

\begin{proof}
Suppose $\calL\calS\neq \calQ_i$. Then, $\pi(C')$ is an irreducible curve of degree $1$ or $2$ in $\bbP^2$. Therefore $\pi(C')$ is non-singular. Since $S$ is just the blow-up of non-singular points of $\bbP^2$, if $\pi(C')$ is a non-singular curve of $\bbP^2$, then its strict transform $C'$ is a non-singular curve in $S$.

Suppose $C'=Q_i\in \calL\calS$. The irreducible curve $\pi(Q_i)$ is an irreducible cubic curve in $\bbP^2$ with multiplicity $2$ at $p_i$. Its strict transform $Q_i$ in $S$ must be non-singular, since it is enough to blow-up $S$ once at $p_i$ to resolve $\pi(Q_i)$.
\end{proof}

\begin{lem}[{see \cite[Lemma 3.3.2]{JMGthesis}}]
\label{lem:del-Pezzo-deg4-lines-list}
The $16$ lines in Table \ref{tab:delPezzo-4-lowdegree} are all the lines in $S$. The intersection of these lines are:
\begin{align*}
&E_i\cdot E_j = -\delta_{ij}, \qquad L_{ij}\cdot E_i = L_{ij}\cdot E_j = 1,\qquad C_0\cdot E_i=1,\qquad C_0^2=-1, \\
&C_0\cdot L_{ij}=0,\qquad {L_{ij}\cdot L_{kl}=\left\{ \begin{array}{rl}
																-1 &\text{if } i=k \text{ and } j=l, \\
																0 & \text{if only two subindices are equal,} \\
																1 & \text{if none of the subindices are equal.} \\
														\end{array} \right.}
\end{align*}
\end{lem}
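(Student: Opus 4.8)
\emph{Proof strategy.} The plan is to determine every prime divisor $C$ on $S$ with $(-K_S)\cdot C=1$ by a numerical argument in $\operatorname{Pic}(S)$, and then to read off the whole intersection table by a direct computation.

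First I would note that a line is automatically a prime divisor: if $C=\sum a_jC_j$ is effective with the $C_j$ distinct prime divisors and $a_j\geq 1$, then ampleness of $-K_S$ gives $(-K_S)\cdot C_j\geq 1$, so $\sum a_j\,(-K_S)\cdot C_j=1$ forces $C$ to be a single reduced prime divisor. Moreover, the Hodge index theorem applied to $-K_S$ and $C$ gives $K_S^2\cdot C^2\leq ((-K_S)\cdot C)^2=1$, hence $C^2\leq 0$; since $C^2=2p_a(C)-1$ is odd and $\geq-1$, we conclude $C^2=-1$ and $p_a(C)=0$, i.e.\ every line is a $(-1)$-curve (alternatively one may quote the theorem of Manin above once $C^2<0$ is known, or the anticanonical embedding of $S$ as an intersection of two quadrics in $\bbP^4$).

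Next I would write $C\sim d\,\pioplane{1}-\sum_{i=1}^5 m_iE_i$ in $\operatorname{Pic}(S)=\mathbb{Z}\,\pioplane{1}\oplus\bigoplus_i\mathbb{Z}E_i$, with $d=\pioplane{1}\cdot C\geq 0$ since $\pioplane{1}$ is nef. If $d=0$ then $C$ is contracted by $\pi$, so $C=E_i$ for some $i$. If $d\geq 1$ then $C$ differs from every $E_i$, so $m_i=E_i\cdot C\geq 0$, and $(-K_S)\cdot C=1$, $C^2=-1$ translate into $\sum_{i=1}^5 m_i=3d-1$ and $\sum_{i=1}^5 m_i^2=d^2+1$. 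Cauchy--Schwarz, $(\sum m_i)^2\leq 5\sum m_i^2$, then gives $(3d-1)^2\leq 5(d^2+1)$, i.e.\ $(2d+1)(d-2)\leq 0$, so $d\leq 2$. For $d=1$ one needs $\sum m_i=\sum m_i^2=2$ with $m_i\geq 0$, so exactly two of the $m_i$ equal $1$: $C\sim\pioplane{1}-E_i-E_j=L_{ij}$. For $d=2$ one needs $\sum m_i=\sum m_i^2=5$, forcing all $m_i=1$: $C\sim 2\,\pioplane{1}-\sum E_i=C_0$. Conversely, Proposition \ref{prop:curveDegDP} shows that each of these $5+\binom{5}{2}+1=16$ classes is effective, the general position of $p_1,\dots,p_5$ makes the unique effective divisor in each class irreducible (the line through $p_i,p_j$ meets no other $p_k$, the conic through all five points cannot split off a line), and $h^0=1$ in every case as noted before the lemma; hence $S$ has exactly $16$ lines.

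Finally, every intersection number in the statement follows by bilinearity from $(\pioplane{1})^2=1$, $\pioplane{1}\cdot E_i=0$ and $E_i\cdot E_j=-\delta_{ij}$: for instance $C_0\cdot E_i=-\sum_k E_k\cdot E_i=1$, $C_0^2=4-5=-1$, $C_0\cdot L_{ij}=2+(E_i^2+E_j^2)=0$, and $L_{ij}\cdot L_{kl}=1+(E_i+E_j)\cdot(E_k+E_l)$, which equals $-1$, $0$ or $1$ according as $\{i,j\}$ and $\{k,l\}$ share two, one or no indices. The one place that needs genuine care is the numerical step — bounding $d$ and, above all, verifying that the multiplicities $m_i$ are non-negative, since this is exactly what makes the Cauchy--Schwarz estimate applicable and the enumeration of solutions exhaustive.
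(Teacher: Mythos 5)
Your argument is correct and complete: the reduction to $(-1)$-curves via Hodge index and the genus formula, the enumeration of classes $dH-\sum m_iE_i$ with $\sum m_i=3d-1$, $\sum m_i^2=d^2+1$, $m_i\geq 0$, and the bilinear computation of the intersection table are exactly the standard lattice argument, and you correctly flag the non-negativity of the $m_i$ (via irreducibility and $C\neq E_i$) as the step that makes the enumeration exhaustive. The paper itself gives no proof but defers to \cite[Lemma 3.3.2]{JMGthesis}, which follows this same classical route, so your proposal matches the intended argument.
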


\begin{lem}
\label{lem:del-Pezzo-deg4-model-line-choice}
Given a line $L\subset S$, we can choose a model $\gamma\colon S\ra \bbP^2$ such that $L=E_1$. If $p=L_1\cap L_2$, the intersection of two lines, we can choose $\gamma$ such that $L_1=E_1,\ L_2=L_{12}$.
\end{lem}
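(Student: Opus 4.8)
The plan is to argue directly with the explicit blow-up description of $S$ from Theorem~\ref{thm:dPclassification} and the complete list of lines in Lemma~\ref{lem:del-Pezzo-deg4-lines-list}, reducing the statement to exhibiting suitable configurations of pairwise disjoint lines. The first thing I would record is that giving a model $\gamma\colon S\to\bbP^2$ amounts to giving an ordered collection $\{F_1,\dots,F_5\}$ of five pairwise disjoint lines on $S$: such curves can be contracted one after another by Castelnuovo's criterion --- disjointness guarantees that at each stage the remaining ones are still disjoint $(-1)$-curves and the surface stays smooth --- the resulting surface $X$ has Picard number $\rho(S)-5=1$ and is rational, hence $X\cong\bbP^2$, and the five images are distinct points (by disjointness) which moreover lie in general position, since otherwise the strict transform of a line through three of them would be a curve meeting $-K_S$ in degree $0$, against ampleness of $-K_S$. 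Throughout I write $\ell$ for the class of the pull-back of a line and $e_i=[E_i]$, so $[L_{ij}]=\ell-e_i-e_j$ and $[C_0]=2\ell-\sum_{i}e_i$.

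\emph{First assertion.} By Lemma~\ref{lem:del-Pezzo-deg4-lines-list}, after permuting the $p_i$ I may assume $L$ is $E_1$, $L_{12}$ or $C_0$. If $L=E_1$ there is nothing to prove. If $L=L_{12}$, I would take the collection $\{L_{12},L_{13},L_{14},L_{15},C_0\}$: any two of $L_{12},\dots,L_{15}$ share the index $1$ and hence do not meet, and $C_0$ is disjoint from each $L_{1k}$, all by the intersection table in Lemma~\ref{lem:del-Pezzo-deg4-lines-list}; contracting this collection with $L_{12}$ listed first produces the desired model. If $L=C_0$, the same collection, now listed with $C_0$ first, does the job.

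\emph{Second assertion.} I would first use the first assertion to fix a model in which $L_1=E_1$. Then $L_2$ is a line with $E_1\cdot L_2=1$, and reading off the intersection numbers in Lemma~\ref{lem:del-Pezzo-deg4-lines-list} one sees that the lines meeting $E_1$ exactly once are precisely $L_{12},L_{13},L_{14},L_{15}$ and $C_0$, so $L_2$ is one of these. If $L_2=L_{1j}$, I relabel the points so that $p_j$ becomes $p_2$ and I am done. The remaining case $L_2=C_0$ needs a genuine change of model, and here the plan is to contract the collection $\{E_1,E_2,L_{34},L_{35},L_{45}\}$, whose pairwise disjointness is again read from the table. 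Since $E_1$ is among the contracted curves it remains the first exceptional curve of the new model, so $L_1=E_1$; and a short computation in $\operatorname{Pic}(S)$ --- the pull-back of a line for the new model being the unique class orthogonal to those five, of square $1$ and canonical degree $-3$, namely $h=2\ell-e_3-e_4-e_5$ --- shows that the line ``$L_{12}$'' of the new model has class $h-e_1-e_2=2\ell-\sum_i e_i=[C_0]$. Hence $L_2=C_0$ plays the role of $L_{12}$ in the new model.

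The one step that is not routine is this last case: one must exhibit a \emph{concrete} alternative configuration of five disjoint lines realising the required change of model, rather than merely appealing to an abstract isometry of the Picard lattice. Equivalently, what is really being proved there is that the stabiliser of $[E_1]$ in the Weyl group acts transitively on the five lines meeting $E_1$, and the explicit set $\{E_1,E_2,L_{34},L_{35},L_{45}\}$ together with the class computation above makes this transitivity effective. Everything else is bookkeeping with the intersection table of Lemma~\ref{lem:del-Pezzo-deg4-lines-list} and standard facts about contracting $(-1)$-curves.
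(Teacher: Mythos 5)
Your proposal is correct and follows essentially the same route as the paper: exhibit an explicit quintuple of pairwise disjoint lines containing the given line (resp.\ the pair $\{E_1,E_2,L_{34},L_{35},L_{45}\}$ for the two-line case) and contract it by Castelnuovo's criterion; your configurations coincide with the paper's cases (i)--(iii). You merely supply some details the paper leaves implicit, such as the general-position check for the image points and the Picard-class computation showing that $C_0$ becomes the line $L_{12}$ of the new model.
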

\begin{proof}
We construct $\gamma\colon S \ra \bbP^2$ by contracting $5$ disjoint exceptional curves $F_i$ (i.e. $F_i\cdot F_j=0$ if $i\neq j$). Let $F_1=L$. 
\begin{itemize}
	\item[(i)] If $F_1=E_1$, take $F_2=E_2, F_3 =L_{34}, F_4=L_{35}, F_5=L_{45}$.
	\item[(ii)] If $F_1=C_0$, take $F_j=L_{1j}$.
	\item[(iii)] If $F_1=L_{12}$, take $F_i=L_{1\ (i+1)}$ for $2\leq i\leq 4$, $F_5=C_0$.
\end{itemize}
Obvious relabelling exhausts all possibilities for $L$ among the $16$ lines in Lemma \ref{lem:del-Pezzo-deg4-lines-list}. By Castelnuovo contractibility criterion \cite[Thm. V.5.7]{HartshorneAG} we can contract each $F_i$, leaving every other point intact. The image of $\gamma$ is $\bbP^2$, because the relative minimal model of $S$ once $5$ exceptional curves are contracted is unique. For the second part we can assume already $L_1=E_1$ and run this lemma again. In that case we are in case (i) above and the lemma follows.
\end{proof}
As with Lemma \ref{lem:del-Pezzo-deg4-lines-list} one can show:
\begin{lem}[{see \cite[Lemma 3.3.4]{JMGthesis}}]
\label{lem:del-Pezzo-deg4-conics}
If $C$ is an irreducible conic in $S$ passing through $p$, then $C=A_i$ or $C=B_i$, with $\pi(C)$ either a conic through all marked points but $p_i$ or a line through $p$ and $p_i$, respectively.
\end{lem}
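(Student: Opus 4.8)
The plan is to pin down the divisor class of $C$ by purely numerical means, in the same spirit as the proof of Lemma~\ref{lem:del-Pezzo-deg4-lines-list}, and then to match it against the two conic rows of Table~\ref{tab:delPezzo-4-lowdegree}. First I would set $L:=\pi^{*}\calO_{\bbP^2}(1)$ and write the class of $C$ as $C\sim dL-\sum_{i=1}^{5}m_iE_i$. Since $L$ is nef, $d=C\cdot L\geq 0$, and actually $d\geq 1$: if $d=0$ then $C$ is contracted by $\pi$, forcing $C=E_i$ for some $i$, which is a line and not a conic. Because $C$ is an irreducible curve distinct from each $E_i$, we have $m_i=C\cdot E_i\geq 0$ for all $i$. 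The only constraint coming from the degree hypothesis is the linear relation $2=-K_S\cdot C=3d-\sum_{i=1}^{5}m_i$.

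The key intermediate step is to show that $C^2=0$. On the one hand, $C$ is irreducible, so the genus formula gives $0\leq p_a(C)=1+\tfrac12\bigl(C^2+K_S\cdot C\bigr)=\tfrac12\,C^2$; hence $C^2$ is a non-negative even integer. On the other hand, $-K_S$ is ample with $(-K_S)^2=4$ and $-K_S\cdot C=2$, so the Hodge index theorem gives $4\,C^2=(-K_S)^2\,C^2\leq(-K_S\cdot C)^2=4$, i.e.\ $C^2\leq 1$. (Alternatively, one may appeal to the result of Manin recalled in Section~\ref{sec:del-Pezzo-surfaces}: an irreducible curve of negative self-intersection is one of the sixteen lines, hence of degree $1$, so a conic automatically has $C^2\geq 0$.) Therefore $C^2=0$, that is, $d^2=\sum_{i=1}^{5}m_i^2$.

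Next I would solve the resulting system. By Cauchy--Schwarz, $d^2=\sum m_i^2\geq\tfrac15\bigl(\sum m_i\bigr)^2=\tfrac15(3d-2)^2$, which rearranges to $d^2-3d+1\leq 0$, so $d\in\{1,2\}$. If $d=1$, then $\sum m_i=1$ and $\sum m_i^2=1$ force one $m_i$ to be $1$ and the others $0$, so $C\sim L-E_i$, i.e.\ $C\in\calB_i$; the blow-up description (exactly as in the paragraph following Table~\ref{tab:delPezzo-4-lowdegree}) then shows $\pi(C)$ is the line through $p_i$ and $\pi(p)$. If $d=2$, then $\sum m_i=4$ and $\sum m_i^2=4$, whence $\sum m_i(m_i-1)=0$; since each $m_i\geq 0$, this forces every $m_i\in\{0,1\}$, so four of them equal $1$ and one equals $0$, giving $C\sim 2L-\sum_{j\neq i}E_j$, i.e.\ $C\in\calA_i$, with $\pi(C)$ a conic through the four points $\{p_j:j\neq i\}$. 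Finally, I would note that $\calB_i$ and $\calA_i$ are base-point-free pencils on $S$ (the pencil of lines through $p_i$, resp.\ of conics through four of the marked points, becomes base-point-free after blowing up), so each class has a unique member through $p$; this identifies $C$ with the curve named $B_i$, resp.\ $A_i$, in Table~\ref{tab:delPezzo-4-lowdegree}.

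The argument is essentially routine, and I expect the only real subtlety to be the following: a priori $C$ could be the strict transform of a plane curve $\pi(C)$ of arbitrarily large degree $d$ with correspondingly large multiplicities at the $p_i$, so the crux is that combining $C^2=0$ with the degree relation through Cauchy--Schwarz both bounds $d$ by $2$ and then determines the multiplicities exactly. Beyond bookkeeping there is no deeper obstacle, this being the conic counterpart of the classification of lines in Lemma~\ref{lem:del-Pezzo-deg4-lines-list}.
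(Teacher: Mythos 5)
Your proof is correct and follows exactly the route the paper indicates (it states the lemma without proof, citing the thesis and remarking that it is proved ``as with'' the classification of lines, i.e.\ by the same numerical analysis of divisor classes): you write $C\sim dL-\sum m_iE_i$, pin down $C^2=0$ via adjunction plus the Hodge index theorem, and solve the resulting Diophantine system to land on the classes $\calB_i$ and $\calA_i$ of Table~\ref{tab:delPezzo-4-lowdegree}. The only point worth keeping explicit is the one you already flag -- the upper bound $C^2\leq 1$ from Hodge index (or Manin's result) is what rules out conics of higher arithmetic genus -- and your final identification of $C$ with the named curve through $p$ via base-point-freeness of the two pencils is the right way to close the argument.
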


\begin{lem}
\label{lem:del-Pezzo-deg4-model-conic-choice}
Given $C$ an irreducible conic in $S$, $p\in C$, we can choose a model $\gamma\colon S\ra \bbP^2$ such that under that model the curve $C$ can be realised as $C=A_i$ for any $i$ in Table \ref{tab:delPezzo-4-lowdegree}, unless $p\in E_1$ in which case $i\neq 1$.
\end{lem}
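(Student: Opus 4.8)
The plan is to follow the pattern of Lemma~\ref{lem:del-Pezzo-deg4-model-line-choice}: I would construct $\gamma$ by choosing five pairwise disjoint $(-1)$-curves $F_1,\dots,F_5$ on $S$ and contracting them. Contracting five disjoint $(-1)$-curves on a degree-$4$ del Pezzo surface produces a smooth rational surface of Picard rank $1$ with $K^2=9$, hence $\bbP^2$; so every such collection determines a model $\gamma\colon S\to\bbP^2$ in which $F_j$ becomes the exceptional curve $E_j$. The first step is then a short piece of intersection arithmetic: writing the class of $C$ in the new model as $aH-\sum c_jE_j$ one has $c_j=C\cdot F_j$, so that $C$ is realised as $A_i$ precisely when $C\cdot F_i=0$ and $C\cdot F_j=1$ for every $j\neq i$ (the hyperplane coefficient being then forced to $2$ by $-K_S\cdot C=2$). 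Thus the lemma reduces to producing, for the prescribed $i$, five pairwise disjoint $(-1)$-curves with exactly this intersection pattern against $C$.

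Next I would normalise using Lemma~\ref{lem:del-Pezzo-deg4-conics}: in the fixed model $\pi$ the conic $C$ is either $A_j$ or $B_j$. If $C=A_j$ there is nothing to do beyond relabelling the exceptional curves of $\pi$ (the same morphism, with a different labelling) by a permutation taking $j$ to $i$, so I may assume $C=B_j$ and, after relabelling, $C=B_5=\pioplane{1}-E_5$. Using Lemma~\ref{lem:del-Pezzo-deg4-lines-list} I would tabulate the sixteen lines against $C$: the eight lines $E_1,\dots,E_4,L_{15},\dots,L_{45}$ satisfy $C\cdot E_k=C\cdot L_{k5}=0$ and pair up, via $E_k+L_{k5}\sim C$, into the four reducible members of the pencil $|C|$, while the remaining eight lines $E_5,C_0,L_{12},L_{13},L_{14},L_{23},L_{24},L_{34}$ each meet $C$ in one point. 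I then take $F_i$ to be one of the eight \emph{fibre-component} lines and $\{F_j:j\neq i\}$ to be a quadruple of pairwise disjoint \emph{section} lines, each disjoint from $F_i$ --- for instance $F_i=E_1$ with $E_5,L_{23},L_{24},L_{34}$, or $F_i=L_{15}$ with $C_0,L_{12},L_{13},L_{14}$ --- and contract the resulting configuration, obtaining a model $\gamma$ with $C=A_i$.

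Finally I would deal with the exceptional clause. In any such $\gamma$ the curve named $E_i$ is the fibre component $F_i$, so $C\cdot E_i=0$ and hence $p\notin E_i$; consequently $i=1$ is incompatible with $p$ lying on $E_1$, whereas for $i\neq 1$ the curve $E_1$ is one of the section lines and may be chosen so that $p\in E_1$ (or so as to be a line through $p$ fixed by an earlier application of Lemma~\ref{lem:del-Pezzo-deg4-model-line-choice}). I expect the main obstacle to lie precisely in the combinatorial step above: verifying, with Lemma~\ref{lem:del-Pezzo-deg4-lines-list} and keeping track of the relabellings, that for \emph{every} admissible choice of fibre component there is indeed a quadruple of pairwise disjoint section lines avoiding it, so that the five contracted curves are genuinely disjoint and $\gamma$ really exists; once that is in place the rest is routine.
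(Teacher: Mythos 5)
Your proposal is correct and follows essentially the same route as the paper: reduce via Lemma~\ref{lem:del-Pezzo-deg4-conics} to $C=B_j$, then exhibit five pairwise disjoint $(-1)$-curves meeting $C$ with pattern $(0,1,1,1,1)$ and contract them, the hyperplane degree $2$ being forced by $-K_S\cdot C=2$. Your organisation of the sixteen lines into fibre components and sections of the pencil $|C|$ is a tidy systematisation of the paper's explicit choices, and the step you flag as remaining is in fact already closed by your two sample configurations, since between them their section quadruples contain all eight section lines up to relabelling.
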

\begin{proof}
If $p\in L$, a line in $S$, assume $L=E_1$ by Lemma \ref{lem:del-Pezzo-deg4-model-line-choice}. We have $C\neq A_1$ since otherwise  $$0=A_1\cdot E_1 =C \cdot E_1 \geq \mult_p(C)\cdot \mult_p(E_1)=1,$$
a contradiction.

If $C=B_1$, take $F_i$ and $\gamma:S\ra \bbP^2$ as in the proof of Lemma \ref{lem:del-Pezzo-deg4-model-line-choice}, case (i). Because $C$ is irreducible, $\overline C=\gamma(B_1)=\oplaned$ by the genus formula on $\bbP^2$. Moreover:
$$B_1\sim\gamma^*(\oplaned)-\sum_{i=1}^5 (F_i\cdot B_1) F_i= \gamma^*(\oplaned)-F_1-F_3-F_4-F_5,$$
and $2=B_1 \cdot (-K_S)=3d -4,$
so $d=2$.
Therefore under the new blow-up $C$ is $A_2$. By obvious relabelling of the $F_j$ we can consider $C=A_i$ with $i\neq 1$.

If $C=B_i$, with $i\neq 1$, then $p\not\in E_1$ since $C$ is irreducible. If $C=B_2$, the same choice of $F_i$ gives us $C=A_1$ under the new blow-up. If $C=B_i$ for $i=3,4,5$ take $F_1=E_1, F_2=E_i, F_3=L_{jk}, F_4=L_{jl}, F_5=L_{kl}$ for different $j,k,l\in \{1,\ldots,5\} \setminus \{ i\}$ and $C=A_i$ under $\gamma$.
\end{proof}

\begin{lem}
\label{lem:del-Pezzo-deg4-conics-tangency}
Let $p\in L$, where $L$ is a line, and let $C_1,C_2$ be distinct irreducible conics passing through $p$. Then $C_1$ and $C_2$ intersect normally at $p$.
\end{lem}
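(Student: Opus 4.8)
The plan is to reduce everything to the single global identity $C_1\cdot C_2=1$: once we know this, since $C_1$ and $C_2$ are non-singular by Lemma~\ref{lem:del-Pezzo-deg4-good-curves-smooth} and both pass through $p$, every local intersection index is non-negative and the one at $p$ is at least $1$, so it must equal $1$ and the two smooth branches cross transversally — i.e.\ normally — at $p$. The point of the hypothesis $p\in L$ is precisely to exclude the one configuration of two distinct irreducible conics on $S$ for which $C_1\cdot C_2\neq 1$.

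First I would normalise the situation by a change of model. By Lemma~\ref{lem:del-Pezzo-deg4-model-conic-choice} choose a model $\pi\colon S\ra\bbP^2$ under which $C_1=A_i$ for some admissible index $i$. By Lemma~\ref{lem:del-Pezzo-deg4-conics} the other conic is $C_2=A_j$ or $C_2=B_j$ for some $j$; since $C_2\neq C_1$ is irreducible and passes through $p$ while $A_i^2=0$, the case $C_2=A_i$ would give $(C_1\cdot C_2)_p\geq 1>0=C_1\cdot C_2$, which is absurd, so $j\neq i$ whenever $C_2=A_j$. A direct computation with the divisor classes of Table~\ref{tab:delPezzo-4-lowdegree} (using $\pioplane{1}^2=1$, $\pioplane{1}\cdot E_k=0$ and $E_k\cdot E_l=-\delta_{kl}$) yields $A_i\cdot A_j=A_i\cdot B_j=1$ for $i\neq j$, while $A_i\cdot B_i=2$. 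Hence $C_1\cdot C_2=1$ unless $C_2=B_i$.

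The crux is to rule out $C_2=B_i$, and this is where the line $L$ enters. From Table~\ref{tab:delPezzo-4-lowdegree} one reads off $A_i+B_i\sim\pioplane{3}-\sum_{k=1}^5 E_k\sim -K_S$. Therefore, for any line $L'$ of $S$ — which is neither $A_i$ nor $B_i$, being of degree $1$, so both intersection numbers below are non-negative —
$$A_i\cdot L'+B_i\cdot L'=(A_i+B_i)\cdot L'=(-K_S)\cdot L'=1,$$
so exactly one of $A_i\cdot L'$, $B_i\cdot L'$ vanishes; that is, $L'$ is disjoint from $A_i$ or from $B_i$. In particular no line of $S$ meets the finite set $A_i\cap B_i$. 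But if $C_2=B_i$, then $p\in C_1\cap C_2=A_i\cap B_i$ and also $p\in L$, a contradiction. Hence $C_2\neq B_i$, so $C_1\cdot C_2=1$, and the first paragraph completes the argument.

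I expect the only mildly delicate parts to be the bookkeeping in the intersection-number computation and checking that Lemma~\ref{lem:del-Pezzo-deg4-model-conic-choice} genuinely lets us write $C_1$ as some $A_i$ (it does); the observation $A_i+B_i\sim -K_S$ collapses the key step to a one-line argument, so there is no real obstacle.
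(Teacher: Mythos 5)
Your proof is correct, and its skeleton is the same as the paper's: normalise the model so that $C_1=A_i$, invoke Lemma \ref{lem:del-Pezzo-deg4-conics} to list the possibilities for $C_2$, and observe that in every surviving case $C_1\cdot C_2=1$, which forces transversality at $p$ since both curves are non-singular there. The one step you handle differently is the exclusion of the tacnodal pair: the paper first uses Lemma \ref{lem:del-Pezzo-deg4-model-line-choice} to arrange $L=E_1$ and then discards every candidate $C_2$ with $E_1\cdot C_2=0$ (in particular the bad partner $B_i$ of $C_1=A_i$), since such a conic cannot pass through $p\in E_1$; you instead avoid normalising $L$ altogether and note that $A_i+B_i\sim -K_S$ has degree $1$ on any line, so a line can meet at most one of $A_i$, $B_i$ and in particular no line passes through a point of $A_i\cap B_i$. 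Your version is slightly more conceptual and model-independent (it identifies exactly which configuration the hypothesis $p\in L$ is there to kill), while the paper's is a more uniform sweep through the finitely many classes in Table \ref{tab:delPezzo-4-lowdegree}; both rest on the same underlying fact that a line has anticanonical degree $1$. The auxiliary observation you make to rule out $C_2\sim A_i$ (via $A_i^2=0$) is harmless and consistent with the uniqueness built into the construction of the curves in Table \ref{tab:delPezzo-4-lowdegree}.
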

\begin{proof}
By Lemma \ref{lem:del-Pezzo-deg4-model-conic-choice} we may assume that $L=E_1$ and $C_1=A_i$ for some $2\leq i\leq 5$. Without loss of generality we may assume that $C_1=A_2$. Note that $C_2\neq A_1, B_j$, for $j>1$ since $E_1\cdot A_1=E_1\cdot B_j=0$ and $C_2$ being irreducible would give a contradiction:
$$0=E_1\cdot C_2\geq \mult_pE_1\cdot \mult_pC_2\geq1.$$
By Lemma \ref{lem:del-Pezzo-deg4-conics} we have that $C_2=B_1$ or $C_2=A_i$ for $i\neq 1,2$. In both cases $C_2\cdot A_2=1$, obtaining simple normal crossings at $p$:
$$1=C_2\cdot A_2\geq (C_2\cdot A_2)\vert_p.$$
\end{proof}

We finish this subsection with a study of the cubic curves of $S$.
\begin{lem}
\label{lem:del-Pezzo-deg4-cubics}
Let $\calL\calS$ be one of the complete linear systems of degree $3$ in the last three rows of Table \ref{tab:delPezzo-4-lowdegree}. Let $\sigma \colon \widetilde S \ra S$ be the blow-up of some point $p\in S$ with exceptional curve $E\subset \widetilde S$. Let $q\in E$. We distinguish the following cases:
\begin{itemize}
	\item[(1.1)] The point $p$ does not lie in any line and $q$ does not lie in the strict transform of any conic. Then there is $C'\in \calL\calS$ irreducible and non-singular with $p\in C'$ and $q\in \widetilde C'$.
	\item[(1.2)] The point $p$ does not lie on a line, $q\in \widetilde A_1$ and $q\not\in \widetilde B_1$. Then for $\calL\calS\in \{\calR_{1ij}, \calQ_1, \calR\}$ there is $C'\in \calL\calS$ irreducible and non-singular such that $p\in C'$ and $q\in \widetilde C'$.
	\item[(2.1)] The point $p\in E_1$ and no other line, $q\not\in \widetilde E_1$ and $q$ does not lie in the strict transform of any conic. Then for $\calL\calS\in \{\calR_{1jk}, Q_i\}$ there is $C'\in \calL\calS$ irreducible and non-singular such that $p\in C'$ and $q\in \widetilde C'$.
	\item[(2.2)] The point $p\in E_1$ and no other line, $q\not\in \widetilde E_1$ and $q\in \widetilde A_5$. Then for $\calL\calS\in \{\calR_{1j5}, \calQ_{5}\}$ there is $C'\in \calL\calS$ irreducible and non-singular such that $p\in C'$ and $q\in \widetilde C'$.
	\item[(2.3)] The point $p\in E_1$ and no other line, $q\in \widetilde E_1$. Then there is $Q_1\in \calQ_1$ irreducible and non-singular such that $p\in Q_1$ and $q\in \widetilde Q_1$.
\end{itemize}
\end{lem}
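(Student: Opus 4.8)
The plan is to treat all five cases uniformly. Fix the system $\calL\calS$ at hand and write $\calL\calS(p,q)\subseteq\calL\calS$ for the sublinear system of curves $C$ with $p\in C$ and $q\in\widetilde C$. It is enough to produce an \emph{irreducible} member of $\calL\calS(p,q)$: by Lemma \ref{lem:del-Pezzo-deg4-good-curves-smooth} such a member $C'$ is automatically non-singular, and the strict transform of a smooth curve through $p$ is again smooth and meets $E$ in the single point $q$. So the proof has two halves --- that $\calL\calS(p,q)\neq\emptyset$, and that $\calL\calS(p,q)$ is not entirely contained in the locus of reducible members of $\calL\calS$.

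For non-emptiness I would argue by dimensions. Each degree-$3$ system in the last rows of Table \ref{tab:delPezzo-4-lowdegree} has arithmetic genus $0$, so $h^0(\calL\calS)\geq 3$ by Proposition \ref{prop:curveDegDP}; computing on $\bbP^2$ --- cubics with an assigned double point through four further points, conics through three of the points, or lines --- shows $h^0(\calL\calS)=3$, so $\calL\calS$ is a base-point-free net and $\calL\calS(p,q)$, cut out by at most two linear conditions, is non-empty. The subtlety is that one needs an irreducible member, so $\calL\calS$ is of no use when $p$ forces a fixed component: if $p\in E_1$ and $\calL\calS\cdot E_1=0$, then every member through $p$ contains $E_1$. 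This is why $\calR$ (with $\calR\cdot E_1=0$) is absent from the lists in cases (2.x), and since case (2.3) moreover needs a member tangent to $E_1$ at $p$, hence $\calL\calS\cdot E_1\geq 2$, only $\calQ_1$ survives there.

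The heart of the matter is excluding reducible members. By Lemmas \ref{lem:del-Pezzo-deg4-lines-list} and \ref{lem:del-Pezzo-deg4-conics}, a reducible cubic on $S$ is a line plus a conic from the catalogue, or three lines, and any component passing through $p$ is one of the finitely many lines or conics through $p$. If $p$ lies on no line (cases 1.x), a reducible member of $\calL\calS(p,q)$ must carry a conic component through $p$, so $q$ lies on the strict transform of that conic; the hypotheses on $q$ (off every $\widetilde A_i$ and $\widetilde B_i$ in (1.1); off all of them except the one allowed $\widetilde A_1$ in (1.2)) rule this out once $\calL\calS$ is chosen so that $A_1$ is a component of no member, i.e.\ so that $[\calL\calS]-[\calA_1]$ is not effective, and $\{\calR,\calQ_1,\calR_{1ij}\}$ is exactly the set of degree-$3$ systems with that property. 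If $p\in E_1$ (cases 2.x), the only line through $p$ is $E_1$, so a reducible member of $\calL\calS(p,q)$ either has $E_1$ as a component --- then it meets $p$ only along $E_1$ and $q\in\widetilde E_1$ --- or carries one of the conics $A_2,\dots,A_5,B_1$ through $p$, in which case $q$ lies on its strict transform; the hypotheses in (2.1) and (2.2) exclude both possibilities (again the chosen $\calL\calS$ is dictated by the non-effectiveness criterion, now for the allowed conic). In case (2.3), where $q\in\widetilde E_1$, the reducible members that this could detect are those with $E_1$ as a component, but $\calQ_1$ has none ($[\calQ_1]-E_1$ is not effective), while each of $A_2,\dots,A_5,B_1$ meets $E_1$ only transversally at $p$, so the strict transform of any other reducible member of $\calQ_1$ through $p$ misses $q$; hence $\calQ_1(p,q)$ has no reducible member, and the member it has is the strict transform of a cuspidal plane cubic with a cusp at $p_1$ in the direction of $p$ --- irreducible, since a reducible plane cubic has no ordinary cusp among its singularities.

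The main obstacle is the bookkeeping in this last step. Because $\calL\calS(p,q)$ is typically a single reduced point of $\calL\calS$, one cannot appeal to a general member: one must identify the curve explicitly --- as the pullback of the line through the image of $p$ in the direction $q$, of the conic through the image of $p$ and three of the $p_i$ with the prescribed tangent, or of the cuspidal cubic above --- and then decide irreducibility from the position of $q$ relative to the finitely many distinguished conics, checking that the stated hypotheses on $q$ really do place it off the strict transform of every conic that could occur as a component. Organising this around the two numerical constraints, namely $\calL\calS\cdot E_1\geq 1$ and non-effectiveness of $[\calL\calS]$ minus the class of the allowed conic, is what produces the exact lists in (1.1)--(2.3). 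None of the argument uses a vanishing theorem or any characteristic-zero input: every step is a dimension count or an intersection computation on $S$ and on $\bbP^2$, and the one point where positive characteristic might interfere --- Bertini-type irreducibility of a pencil --- does not arise, since the relevant members are written down by hand.
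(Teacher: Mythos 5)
Your proposal follows the paper's route in all essentials: existence from the dimension count $h^0(\calL\calS)\geq 3$ via Proposition \ref{prop:curveDegDP}, irreducibility by ruling out line-plus-conic decompositions through the catalogue of Lemmas \ref{lem:del-Pezzo-deg4-lines-list} and \ref{lem:del-Pezzo-deg4-conics} together with the positional hypotheses on $p$ and $q$, and non-singularity from Lemma \ref{lem:del-Pezzo-deg4-good-curves-smooth}. Your two numerical criteria ($\calL\calS\cdot E_1\geq 1$ and non-effectiveness of $[\calL\calS]$ minus the class of the allowed conic) correctly reproduce the lists in (1.1)--(2.3), and your explicit exclusions in (2.3) match the paper's.

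There is one genuine imprecision in the existence half. You assert that $\calL\calS(p,q)$ is ``cut out by at most two linear conditions,'' but the condition $q\in\widetilde C$ is \emph{not} linear on $\calL\calS$: the linear condition is $q\in\sigma^*C-E$, and for a member $C$ with $\mult_pC\geq 2$ one has $\sigma^*C-E=\widetilde C+(\mult_pC-1)E\ni q$ automatically while $q\in\widetilde C$ may fail. So the dimension count only produces a member of $\vert\sigma^*\calL\calS'-E\vert$ through $q$, which lies in $\calL\calS(p,q)$ only if it does not contain $E$. Your second half analyses reducible members \emph{of} $\calL\calS(p,q)$, so it does not automatically dispose of this residual case. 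The paper handles it as a separate Case B ($E\subset\Supp(B)$): a member singular at $p$ must be $L+F$ with two components through $p$, which forces $p$ onto a line in cases (1.x) and forces $F\sim\calL\calS-E_1$ to be an effective irreducible conic class in cases (2.x) --- and one checks this class is never in the catalogue of Lemma \ref{lem:del-Pezzo-deg4-conics}. Note that this is a \emph{different} non-effectiveness check from the one you state (it concerns $[\calL\calS]-[E_1]$, not $[\calL\calS]$ minus the conic allowed to contain $q$); you perform it explicitly only in case (2.3). You have all the ingredients to close this, but as written the step is missing for (2.1) and (2.2), and the phrase ``two linear conditions'' hides it everywhere.
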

In each case, denote $C'$ by the letter in the last column of Table \ref{tab:delPezzo-4-lowdegree}. The cases considered can be expanded including, for instance, when $p$ is the intersection of two lines. However, for our purposes the current statement is sufficient. 

\begin{proof}
First observe that by Lemma \ref{lem:del-Pezzo-deg4-good-curves-smooth} all irreducible curves in $\calL\calS$ are non-singular. 

Let $\calL\calS'=\{D\in \calL\calS \setsep p\in \Supp(D)\}$ and let $\widetilde{\calL\calS'}=\vert\sigma^*(\calL\calS')-E\vert$. By Proposition \ref{prop:curveDegDP}
$$h^0(\widetilde{\calL\calS'})=h^0(\calL\calS')=h^0(\calL\calS)-1\geq 2,$$ so we can choose $B\in \widetilde{\calL\calS'}$ an effective divisor passing through $q$. We distinguish two cases.

\textbf{Case A: $E\not\subset\Supp(B)$.} Then let $C'=\sigma_*(B)$ and $B=\widetilde C' \sim \sigma^*(C')-E$ where $B\cdot E=1$. In particular $C'$ is non-singular at $p$. If $C'$ is irreducible, we are done. Suppose for contradiction that $C'$ is reducible, then $C'=L+F$, the union of a line $L$ and a (possibly reducible) conic $F$ not intersecting at $p$. Under the hypothesis of (1.1) this is impossible. Under the hypothesis of (1.2), $F=A_1$, but then $C'-A_1$ is not in the rational class of any line in Lemma \ref{lem:del-Pezzo-deg4-lines-list}, giving a contradiction.

Under the hypothesis of (2.1), if $p\in L$, then $L=E_1$, but since $q\in \widetilde C'$ and $q\not\in \widetilde E_1$, then $q\in \widetilde F$, which is impossible. Therefore $p\in F$ and hence $q\in \widetilde F$, which is also impossible, giving a contradiction.

Under the hypothesis of (2.2), we must have $F=A_5$, but then $\calL\calS-A_5$ should be the class of a line, which is impossible for $\calL\calS$ as in (2.2). 

Under the hypothesis of (2.3) either $q\in \widetilde E_1$ or $q\in \widetilde F$. Suppose the latter, we can assume, by using Lemma \ref{lem:del-Pezzo-deg4-conics} and Lemma \ref{lem:del-Pezzo-deg4-model-conic-choice} that $F= A_5\sim\pi^*(\oplane{2})-\sum_{i=1}^4E_i$.
$A_5$ is irreducible, since otherwise $L_a\sim A_5-E_1\sim \pioplane{2} -2E_1-\sum_{i=2}^4E_i,$ would be a line. Since $A_5\cdot E_1=1$, both curves intersect transversally at $p$. This is impossible, since $q\in \widetilde A_5\cap \widetilde E_1$. We conclude that $q$ does not belong to the strict transform of any conic. Therefore $L=E_1$, but then $F\sim\calQ_1-E_1=\pioplane{2}-3E_1-\sum_{i=2}^5 E_i$ and there is no conic in Lemma \ref{lem:del-Pezzo-deg4-conics} in that class, nor it is possible to find two lines in Lemma \ref{lem:del-Pezzo-deg4-lines-list} whose sum adds up to $F$. 

\textbf{Case B: $E\subset\Supp(B)$.} Let $B=A+bE$ where $E\not\subset\Supp(A)$, $b\geq 1$ is an integer and $A$ is effective. We want to show this is impossible under the different assumptions. Let $C'=\sigma_*(B)=\sigma_*(A)\in \calL\calS$. Then $\widetilde C'=A=B-bE\sim\sigma^*(C')-(b+1)E$, and $C'$ is singular at $p$ and reducible by Lemma \ref{lem:del-Pezzo-deg4-good-curves-smooth}. We write $C'=L+F$, the union of a line $L$ and a (possibly reducible) conic $F$ intersecting at $p$. Under the hypothesis of (1.1) and (1.2) this is impossible since $p$ does not belong to any line. Under the hypothesis of (2.1), (2.2) and (2.3), then $L=E_1$ and $F$ is irreducible, but in each case $F\sim\calL\calS-E_1$ is not the class of an irreducible conic by Lemma \ref{lem:del-Pezzo-deg4-conics}.
\end{proof}

\subsection{Auxiliary $\bbQ$-divisors}
\label{sec:degree-4-Aux-Qdivisors}
In this section we will use the rational curves constructed in the previous section to show the existence of certain effective anti-canonical $\bbQ$-divisors with good local properties and controlled singularities. These $\bbQ$-divisors are used in the proof of Theorem \ref{thm:Main}, when $K_S^2=4$.
\begin{lem}
\label{lem:del-Pezzo-deg4-complementary-hyperplane-section}
Given an integral curve $C\subset S$ with $\deg C \leq 2$, there is an irreducible curve $Z$ such that $Z+C\in \vert-K_S\vert$.
\end{lem}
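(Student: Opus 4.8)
The plan is to show that the complete linear system $\vert D\vert$ with $D:=-K_S-C$ is non-empty and contains an integral member $Z$; then $Z+C$ is effective with $Z+C\sim-K_S$, so $Z+C\in\vert-K_S\vert$, as required. Since $C$ is integral with $\deg C\in\{1,2\}$, Lemma~\ref{lem:del-Pezzo-deg4-lines-list} (when $\deg C=1$) or Lemma~\ref{lem:del-Pezzo-deg4-conics} together with Table~\ref{tab:delPezzo-4-lowdegree} (when $\deg C=2$) gives $C^2=\deg C-2$ and $p_a(C)=0$; the genus formula then yields $\deg D=4-\deg C$, $D^2=2-\deg C$ and $p_a(D)=0$. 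Moreover $h^2(\calO_S(D))=h^0(\calO_S(2K_S+C))=0$, since $(2K_S+C)\cdot(-K_S)=\deg C-8<0$ and $-K_S$ is ample, so $2K_S+C$ is not linearly equivalent to an effective divisor. Riemann--Roch (exactly as in the proof of Proposition~\ref{prop:curveDegDP}) then gives $h^0(\calO_S(D))\geq\deg D+p_a(D)=4-\deg C$, so $\dim\vert D\vert\geq 1$ when $C$ is a conic and $\dim\vert D\vert\geq 2$ when $C$ is a line.

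When $\deg C=2$ I would conclude by a counting argument. Every member of $\vert D\vert$ has anticanonical degree $2$, so a non-integral member must be of the form $\ell_1+\ell_2$ or $2\ell$ for lines $\ell,\ell_1,\ell_2\subset S$. By Lemma~\ref{lem:del-Pezzo-deg4-lines-list} there are only finitely many lines in $S$, hence only finitely many such divisors; since $k$ is algebraically closed, hence infinite, and $\dim\vert D\vert\geq 1$, the system $\vert D\vert$ has infinitely many members, so at least one of them is integral.

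When $\deg C=1$ this counting breaks down, since a pencil of cubics could in principle consist entirely of divisors $\ell+Q$ with $\ell$ a line and $Q$ a conic, and conics move in pencils; moreover, Bertini-type irreducibility is delicate in positive characteristic. This is the main obstacle, and I would bypass it using the explicit curves of Subsection~\ref{sec:degree-4-low-degree-curves}. By Lemma~\ref{lem:del-Pezzo-deg4-model-line-choice} choose a model $\gamma\colon S\to\bbP^2$ with $C=E_1$; then $D=-K_S-E_1$ lies in the class $\calQ_1$ of Table~\ref{tab:delPezzo-4-lowdegree}. Pick $p\in S$ lying on no line (possible since the lines are finite in number), let $\sigma\colon\widetilde S\to S$ be the blow-up of $p$ with exceptional curve $E$, and pick $q\in E$ not lying on the strict transform of any conic through $p$: the conics through $p$ are finite in number (at most one member of each pencil $\calA_i,\calB_i$ passes through $p$, and there is no reducible conic through $p$ since $p$ lies on no line), each is non-singular at $p$ by Lemma~\ref{lem:del-Pezzo-deg4-good-curves-smooth} and so meets $E$ in a single point, and a finite set of points cannot cover $E\cong\bbP^1$. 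Case~(1.1) of Lemma~\ref{lem:del-Pezzo-deg4-cubics} then produces an irreducible curve $Q_1\in\calQ_1$, and taking $Z=Q_1$ gives $Z+C\in\vert-K_S\vert$.
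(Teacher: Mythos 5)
Your proposal is correct, and it splits into two halves of different character relative to the paper. For $\deg C=1$ you do essentially what the paper does: both arguments reduce to producing an irreducible member of $\calQ_1=\vert -K_S-E_1\vert$ via Lemma \ref{lem:del-Pezzo-deg4-cubics}; the paper takes $p\in E_1$ off the other lines and $q\in\widetilde E_1$ (case (2.3)), while you take $p$ off all lines and a generic $q\in E$ (case (1.1)) — an immaterial difference, and your justification that such $p$ and $q$ exist (finitely many lines, finitely many conics through $p$ by Lemma \ref{lem:del-Pezzo-deg4-conics}, each meeting $E$ in one point) is sound. For $\deg C=2$ your route is genuinely different: the paper normalises $C=A_1$ via Lemma \ref{lem:del-Pezzo-deg4-model-conic-choice}, picks $p\in A_1$ off all lines, and exhibits $Z=B_1$ explicitly, checking irreducibility against the list of lines; you instead compute $\dim\vert -K_S-C\vert\geq 1$ by Riemann--Roch and observe that every non-integral member of a degree-$2$ class is supported on the finitely many lines, so the pencil must contain an integral member over an infinite field. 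Your counting argument is shorter, avoids the model-choice lemma entirely, and is robust (it uses only finiteness of lines and ampleness of $-K_S$); the paper's construction buys slightly more — it names the complementary curve $B_1$ and places $p$ on it, which is the form in which the lemma is actually consumed later (e.g.\ in Lemma \ref{lem:deg4LCSpoints} only the existence of some irreducible $Z$ is needed, so your version suffices there). Both arguments are characteristic-free, as required.
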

\begin{proof}
Given $p\in C$, denote by $\sigma\colon\widetilde{S} \ra S$ the blow-up at $p$ and $\widetilde C$ the strict transform of $C$.

If $\deg C=1$ we can assume $C=E_1$ by Lemma \ref{lem:del-Pezzo-deg4-model-line-choice}. Consider
$$\calQ_1 = \vert\pioplane{3}-2E_1-E_2-\cdots -E_5\vert.$$
Choose $p\in E_1$ not passing through any other line and $q\in \widetilde E_1$. By Lemma \ref{lem:del-Pezzo-deg4-cubics} there is an irreducible and non-singular curve $Z=Q_1\in \calQ_1$.

If $\deg C=2$ by Lemma \ref{lem:del-Pezzo-deg4-model-conic-choice} assume $C=A_1$. Choose $p\in A_1$ such that $p$ is not in any line and take $Z=B_1\in \calB_1$, which is irreducible (see proof of Lemma \ref{lem:del-Pezzo-deg4-aux-divisors-G-Basic}, case 1). In both cases $C+Z\sim -K_S.$
\end{proof}

We provide a joint proof of the next two lemmas.
\begin{lem}
\label{lem:del-Pezzo-deg4-aux-divisors-G-Basic}
Let $S$ be a del Pezzo surface of degree $4$. Let $p\in S$ be a point belonging to at most one line. There is an effective $\bbQ$-divisor $G=\sum{g_i G_i}\simq-K_S$, where all $G_i$ are irreducible and non-singular curves and such that
\begin{itemize}
	\item[(i)] the pair $(S,\frac{2}{3}G)$ is log canonical,
	\item[(ii)] the point $p\in G_i$ for all $G_i$,
	\item[(iii)] $\deg G_i\leq 2$ for all $G_i$.
\end{itemize}
\end{lem}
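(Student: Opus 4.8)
The goal is to build an effective anticanonical $\bbQ$-divisor $G$ through the point $p$, all of whose components are smooth rational curves of degree $\le 2$ passing through $p$, such that $(S,\frac23 G)$ is log canonical. The idea is to exhibit $G$ as a $\bbQ$-linear combination of the explicit low-degree curves catalogued in Table \ref{tab:delPezzo-4-lowdegree}, and then control log canonicity of $\frac23 G$ at $p$ by a local multiplicity/intersection bound using Lemma \ref{lem:adjunction}. I would split into two cases according to whether $p$ lies on a line or not.

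\textit{Case 1: $p$ lies on no line.} By Lemma \ref{lem:del-Pezzo-deg4-conics} there is an irreducible (hence, by Lemma \ref{lem:del-Pezzo-deg4-good-curves-smooth}, smooth) conic $B_i$ through $p$; pick a model so that $p$ is a general point of this conic. Using Lemma \ref{lem:del-Pezzo-deg4-complementary-hyperplane-section} (or directly the cubic constructions of Lemma \ref{lem:del-Pezzo-deg4-cubics}), I would find further smooth low-degree curves through $p$ — say two distinct conics, or a conic and pieces of lines — meeting $p$ transversally (pairwise simple normal crossings at $p$), which is exactly what Lemma \ref{lem:del-Pezzo-deg4-conics-tangency} provides when $p$ sits on a line, and an analogous transversality statement when it does not. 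Setting $G = g_1 G_1 + \cdots$ with the $g_i$ chosen so that $\sum g_i G_i \simq -K_S$ (e.g. $G = \frac12 C_1 + \frac12 C_2$ for two conics, or a weighted sum of three lines and a conic), one gets $\deg G = 4 = (-K_S)^2$, all components smooth of degree $\le 2$ through $p$, hence (ii) and (iii). For (i): since the $G_i$ meet pairwise transversally at $p$ and are smooth there, one blow-up of $p$ separates them, and the discrepancy inequality of Lemma \ref{lem:logcanUpDown} reduces the question to checking $\frac23\,\mult_p G \le 1$, i.e. $\mult_p G \le \frac32$; with the $g_i$ above and at most two branches through $p$ of coefficient $\le\frac12$ each, $\mult_p(\frac23 G)\le \frac23\cdot 1 < 1$, and away from $p$ the divisor $\frac23 G$ is a sum of smooth curves with small coefficients, so log canonical there too.

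\textit{Case 2: $p$ lies on exactly one line $L$.} Normalise via Lemma \ref{lem:del-Pezzo-deg4-model-line-choice} so that $L = E_1$. Now I would take $G$ to include $L=E_1$ with a controlled coefficient together with conics $A_i$ ($i\ne 1$) or $B_i$ through $p$, chosen (using Lemma \ref{lem:del-Pezzo-deg4-model-conic-choice}) so the components are smooth through $p$ and, by Lemma \ref{lem:del-Pezzo-deg4-conics-tangency}, meet pairwise normally at $p$; pick the coefficients so the total class is $-K_S$ and no coefficient exceeds, say, $\frac12$ (so that $\frac23 g_i \le \frac13 \le 1$, giving $d_i\le 1$ and handling the non-klt-along-a-component issue of Lemma \ref{lem:adjunction}(ii)). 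Again one blow-up at $p$ separates the branches, and the resulting discrepancy is $\frac23\mult_p G - 1 \le \frac23\cdot(\tfrac12+\tfrac12+\tfrac12) - 1 = 0$... so I would be a touch more careful: with at most two or three transverse branches through $p$ and coefficient $\le\frac12$ each, $\frac23\mult_pG\le 1$, which after one blow-up gives a simple normal crossings pair with all coefficients $\le 1$, hence log canonical by Lemma \ref{lem:logcanUpDown}.

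\textit{Main obstacle.} The delicate point is (i), and specifically ensuring that the explicit curves one picks genuinely meet \emph{pairwise transversally} at $p$ with total multiplicity small enough that $\frac23\mult_pG\le 1$ — this forces the number of branches through $p$ times the maximal coefficient to stay controlled, so the combinatorics of \emph{which} low-degree curves to combine (and with what weights) is exactly constrained by Table \ref{tab:delPezzo-4-lowdegree} and the intersection numbers in Lemma \ref{lem:del-Pezzo-deg4-lines-list}. The transversality inputs (Lemmas \ref{lem:del-Pezzo-deg4-conics-tangency}, \ref{lem:del-Pezzo-deg4-model-conic-choice}) do the real work; once branches are separated by a single blow-up, log canonicity is just the numerical check $\frac23\mult_pG\le 1$ via Lemma \ref{lem:logcanUpDown}, together with the trivial observation that a reduced simple normal crossings divisor with coefficients $\le 1$ is log canonical. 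I would expect the bookkeeping of the two cases (and a possible sub-case when $p$ lies on the intersection of two lines, though the hypothesis $p\in$ at most one line rules this out) to be the only real content beyond invoking the earlier lemmas.
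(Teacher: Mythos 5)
Your overall strategy (combine the low-degree curves of Table \ref{tab:delPezzo-4-lowdegree} through $p$, split on whether $p$ lies on a line, and check log canonicity at $p$ after one blow-up) is the same as the paper's, but there are concrete gaps in both cases. In Case 1 ($p$ on no line) the paper takes $G=A_1+B_1\sim -K_S$, with coefficients $1$; your candidate $\frac12 C_1+\frac12 C_2$ has anticanonical degree $2$, not $4$, so it is not $\bbQ$-linearly equivalent to $-K_S$, and in any case two arbitrary conics need not sum to the anticanonical class. More seriously, the transversality you rely on is not available here: $A_1\cdot B_1=2$ and the two conics may meet at a single tacnodal point. Lemma \ref{lem:del-Pezzo-deg4-conics-tangency} only asserts transversality when $p$ lies on a \emph{line}, and there is no ``analogous statement'' otherwise. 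At a tacnode $\mult_pG=2$, so your criterion $\frac23\mult_pG\le 1$ fails and one blow-up does not produce simple normal crossings; the paper instead uses that the log canonical threshold of a tacnode with reduced branches is $3/4\ge 2/3$. You also never verify that the members of the pencils $\calA_i$, $\calB_i$ passing through $p$ are irreducible; the paper rules out reducibility using the classification of lines (a reducible conic through $p$ would put $p$ on a line).

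In Case 2 ($p\in E_1$ and no other line), your plan to keep all coefficients $\le\frac12$ cannot be realised. By Lemma \ref{lem:del-Pezzo-deg4-conics} the only irreducible conics through $p$ are $B_1$ and $A_j$ for $j\ne 1$ (since $A_1\cdot E_1=B_j\cdot E_1=0$ for $j\ne1$), and solving $aE_1+bB_1+\sum_{j=2}^5 c_jA_j\sim -K_S$ forces $c_j=\frac13$, $b=\frac13$ and $a=\frac23>\frac12$; this is exactly the divisor the paper uses. The argument still closes with these coefficients (the pairwise intersection numbers are all $1$, so one blow-up gives simple normal crossings and the exceptional coefficient is $\frac23\cdot\frac73-1=\frac59\le 1$), but your displayed computation $\frac23(\frac12+\frac12+\frac12)-1=0$ does not correspond to any admissible choice. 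So the proposal identifies the right ingredients but the specific divisors and the numerical verification of (i) need to be repaired as above, most importantly the tacnodal possibility in Case 1.
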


\begin{lem}
\label{lem:del-Pezzo-deg4-aux-divisors-H}
Let $S$ be a del Pezzo surface of degree $4$ and $p\in S$ be a point which belongs to at most one line. Let $\sigma \colon \widetilde S \ra S$ be the blow-up of $p$ with exceptional curve $E$. Let $q\in E$ be a given point. There exists an effective $\bbQ$-divisor $H=\sum{h_i H_i}\simq -K_S$ where $H_i$ are irreducible and non-singular curves and such that:
\begin{itemize}
	\item[(i)] the pair $(S,\frac{2}{3}H)$ is log canonical,
	\item[(ii)] the point $p\in H_i$ for all $H_i$,
	\item[(iii)] $\deg H_i\leq 3$ for all $H_i$,
	\item[(iv)] if $\deg H_i>1$, then the point $q\in \widetilde H_i$, the strict transform of $H_i$ via $\sigma$.
\end{itemize}
\end{lem}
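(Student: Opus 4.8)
The plan is to prove both lemmas at once by \emph{writing down} $G$ and $H$ explicitly, as $\bbQ$-linear combinations of the low-degree curves produced in Subsection~\ref{sec:degree-4-low-degree-curves} that all contain $p$ (and, for $H$, whose strict transforms under $\sigma$ all contain $q$ once their degree exceeds $1$), with the coefficients \emph{forced} by the requirement that the total class be $-K_S$. For $G$ I would distinguish ``$p$ on no line'' from ``$p\in E_1$'' (arranging the latter via Lemma~\ref{lem:del-Pezzo-deg4-model-line-choice}); for $H$, the five cases of Lemma~\ref{lem:del-Pezzo-deg4-cubics}. The candidates are:
\begin{itemize}
\item $G$, $p$ on no line: $G=A_i+B_i$ for any $i$, since $\calA_i+\calB_i\simq-K_S$;
\item $G$, $p\in E_1$: $G=\tfrac23E_1+\tfrac13B_1+\tfrac13\textstyle\sum_{j\neq1}A_j$;
\item $H$, case $(1.1)$: $H=\tfrac12R+\tfrac16\textstyle\sum_{i=1}^{5}Q_i$;
\item $H$, case $(1.2)$: $H=\tfrac12(A_1+Q_1+R)$;
\item $H$, cases $(2.1)$ and $(2.3)$: $H=E_1+Q_1$ with $Q_1\in\calQ_1$, since $E_1+\calQ_1\simq-K_S$;
\item $H$, case $(2.2)$: $H=\tfrac25E_1+\tfrac35A_5+\tfrac15(R_{125}+R_{135}+R_{145})+\tfrac15Q_5$.
\end{itemize}
In every case the curves involved --- through $p$, with strict transform through $q$ whenever their degree is $>1$, and irreducible and non-singular --- are furnished by Lemmas~\ref{lem:del-Pezzo-deg4-good-curves-smooth}, \ref{lem:del-Pezzo-deg4-conics} and~\ref{lem:del-Pezzo-deg4-cubics}, and a short intersection computation checks $\simq-K_S$ and $\deg=4$. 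Given this, conditions (ii), (iii), (iv) are immediate, and only (i) remains.

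For (i) I would first estimate $\mult_p$ of the relevant $\bbQ$-divisor. In cases $(1.1)$ and $(1.2)$ of $H$ one has $\mult_x(\tfrac23H)\le1$ at \emph{every} point $x$, so the contrapositive of Lemma~\ref{lem:adjunction}(i) (Skoda's inequality) gives log canonicity at once. When $H=E_1+Q_1$ (cases $(2.1)$, $(2.3)$) or $G=A_i+B_i$, we are looking at a two-component anticanonical curve whose smooth components meet in two points counted with multiplicity: at $p$ they cross transversally (where $(S,\tfrac23(\,\cdot\,))$ is simple normal crossings) or form a tacnode (where $\tfrac23<\tfrac34=\lct(\text{tacnode})$ applies), the remaining intersection point being an ordinary node; nothing further happens away from $p$.

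The two remaining cases are $(2.2)$ of $H$ and $p\in E_1$ of $G$, where $\mult_p$ of $\tfrac23H$ (resp. $\tfrac23G$) exceeds $1$. Here I would blow up $p$, getting $\sigma\colon\widetilde S\to S$ with exceptional curve $E$ of coefficient $\mult_p-1<1$. The crucial observation is that the components meet at $p$ with contact order bounded by their intersection numbers on $S$: for instance $A_5,R_{125},R_{135},R_{145},Q_5$ meet $E_1$ once and each other twice, so, sharing the tangent direction $q$, their contact at $p$ is exactly $2$ and their strict transforms are pairwise transversal at the point $q\in E$; likewise the six curves through $p$ in the $G$ case. Log canonicity then follows from the elementary fact that finitely many pairwise transversal smooth branches through one point, each with coefficient $\le1$ and with coefficient-sum $\le2$, form a log canonical pair (blow up once more: the new exceptional coefficient is $\le1$ and one is left with nodes). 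The relevant sums here are $\tfrac{17}{15}$ (at $q$, case $(2.2)$) and $1$ (in the $G$ case), both below $2$; away from $p$ all pairwise intersections concentrate at $p$, leaving nothing to check.

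I expect case $(2.2)$ to be the main obstacle. There $q$ forces the use of the rigid conic $A_5$ --- the unique member of $\calA_5$ through $p$, whose tangent at $p$ happens to be $q$ --- after which $\vert-K_S\vert$ admits no decomposition into curves through $p$ with integer coefficients, so one is pushed to the four-term $\bbQ$-divisor above with $\mult_p(\tfrac23H)=\tfrac65$ and five components mutually tangent at $p$; making the blow-up bookkeeping work, in particular checking that the intersection numbers really do cap the contact orders at $2$, is the delicate point. A secondary but routine issue is the irreducibility of all components: this is exactly where the hypothesis ``$p$ lies on at most one line'' is used, since any member of $\calA_j$, $\calB_j$, $\calR$ or $\calQ_j$ through $p$ that broke off a line would force $p$ onto a second line.
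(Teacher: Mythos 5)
Your construction agrees with the paper's almost everywhere: the two $G$'s and the $H$'s in cases (1.1), (2.2) and (2.3) are literally the paper's divisors, your $H=\tfrac12(A_1+Q_1+R)$ in case (1.2) and $H=E_1+Q_1$ in case (2.1) are harmless variants of the paper's choices (coefficient sum $\tfrac32$, resp.\ a nodal anticanonical pair, so the same Skoda/tacnode arguments apply), and your log-canonicity checks (Skoda's inequality when $\mult\le\tfrac32$, one extra blow-up when the components are mutually tangent at $p$) are exactly the paper's. The genuine problem is that your case division for $H$, taken verbatim from the five cases of Lemma~\ref{lem:del-Pezzo-deg4-cubics}, is not exhaustive. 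When $p$ lies on no line, the two conics $A_1$ and $B_1$ through $p$ satisfy $A_1\cdot B_1=2$ and may be \emph{tangent} at $p$ (Lemma~\ref{lem:del-Pezzo-deg4-conics-tangency} does not help here, since it assumes $p$ lies on a line); if $q$ is their common tangent direction then $q\in\widetilde A_1\cap\widetilde B_1$, and none of the hypotheses (1.1)--(2.3) holds. Your proposal never addresses this configuration.

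The missing configuration cannot be folded into your case (1.2), because there every cubic of Table~\ref{tab:delPezzo-4-lowdegree} passing through $p$ with strict transform through $q$ is forced to be reducible: it must break off one of the two tangent conics, leaving a line. Indeed $\calR-B_1\sim E_1$, $\calQ_1-B_1\sim C_0$, $\calR_{1ij}-B_1\sim L_{ij}$, $\calQ_i-A_1\sim L_{1i}$ for $i\neq 1$, and $\calR_{ijk}-A_1\sim E_l$ when $1\notin\{i,j,k\}$; since each of these linear systems has exactly a one-dimensional family of members through $p$ and imposing $q$ on the strict transform cuts it to a single member, that member is the reducible one (e.g.\ the unique member of $\calR$ through $p$ tangent to direction $q$ is $B_1+E_1$, as $\pi(B_1)$ passes through the blown-up point $p_1$). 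So your $H=\tfrac12(A_1+Q_1+R)$ cannot be assembled from irreducible, non-singular components satisfying (ii) and (iv): the candidate for $R$ contains $E_1$, which does not even pass through $p$. The paper closes this gap with a separate subcase, taking $H=A_1+B_1\sim -K_S$, which satisfies (i) because a tacnode of two smooth branches has log canonical threshold $\tfrac34>\tfrac23$; you need to add this (or an equivalent) subcase for the proof to be complete.
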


\begin{proof}[Proof of lemmas \ref{lem:del-Pezzo-deg4-aux-divisors-G-Basic} and \ref{lem:del-Pezzo-deg4-aux-divisors-H}]
We construct $G$ and $H$ by case analysis on the position of $p\in S$ and $q\in E$. We use curves from Table \ref{tab:delPezzo-4-lowdegree}, which are lines, conics and cubics. These were constructed depending on $p$ and $q$ and were possibly reducible. Conditions (ii) and (iii) will be clear by construction, as well as condition (iv), for $H$.

We will check that $(S, \frac{2}{3}G)$ and $(S, \frac{2}{3}H)$ are log canonical. Note that it is sometimes enough to check $\mult_p H\leq \frac{3}{2}$, since if $(S, \frac{2}{3}H)$ is not log canonical, then $\mult_pH>\frac{3}{2}$ by Lemma \ref{lem:adjunction} (i).

The main task is to show that the curves chosen for each particular case are indeed irreducible. Ultimately this is the reason for our break down into cases. Irreducible curves in Table \ref{tab:delPezzo-4-lowdegree} are non-singular by Lemma \ref{lem:del-Pezzo-deg4-good-curves-smooth}.

\begin{case}
\label{case:1}{\ \\}
\assump{$p$ is not in any line}In particular $p\not\in E_i$ for all $i$. Let $G=A_1+B_1\sim -K_S$. Since all curves $C\subset \Supp(G)$ are conics, if $C=L_a+L_b$, the sum of two lines, then $p$ is in one line, contradicting Assumption 1. Moreover $(S,\frac{2}{3}G)$ is log canonical, since $A_1$ and $B_1$ intersect either in a tacnodal point or with simple normal crossings. 

\begin{subcase}
\label{subcase:1.1}{\ \\}
\subassump{$q$ is not in the strict transform of conics in $S$ passing through $p$}Let $H=\frac{1}{2}R+ \frac{1}{6}\sum_{i=1}^5Q_i \simq -K_S.$
Again, \refAssump and \refSubassump assure that $R$ and $Q_i$ are irreducible. Finally
\[\mult_p(H) = \left[\frac{1}{2}+5\cdot \frac{1}{6}\right]=\frac{8}{6}<\frac{3}{2}.\]
\end{subcase}

\begin{subcase}\label{subcase:1.2}{\ \\}
\subassump{The point $q\in \widetilde C$, the strict transform of a conic in $S$}By Assumption 1, $q\not\in \widetilde L$, for $L$ a line in $S$. Without loss of generality assume $C=A_1$, which is irreducible (use Lemma \ref{lem:del-Pezzo-deg4-model-conic-choice}). Observe that given a conic $C'\neq A_1$ with $p\in C'$, then $A_1\cdot C'=1$ unless $C'=B_1$. If $(A_1\cdot B_1)\vert_p=1$ then $A_1$ is the only conic such that $q\in \widetilde A_1$. Suppose $C'\neq B_1$ or $C'=B_1$ with $(A_1\cdot B_1)\vert_p =1$. In particular, $q\not\in \widetilde C'$. Let
\[H=\frac{1}{2}A_1 + \frac{1}{2}R_{125}+ \frac{1}{2}R_{134}\simq -K_S.\]
All components of $\Supp(H)$ are irreducible by Lemma \ref{lem:del-Pezzo-deg4-cubics}. Finally
\[\mult_p(H) = \left[\frac{1}{2}+\frac{1}{2}+\frac{1}{2}\right]=\frac{3}{2}.\]

Suppose $(A_1\cdot B_1)\vert_p=2$. Then $q=\widetilde B_1\cap \widetilde A_1$. Let $H=A_1+B_1\sim -K_S$. 
Clearly $A_1,B_1$ are irreducible by Assumption 1 and $(S, \frac{2}{3} H)$ is log canonical by Case 1.
\end{subcase}
\end{case}
\begin{case}
\label{case:2}
Suppose $p\in L$, a line in $S$ and no other line. By Lemma \ref{lem:del-Pezzo-deg4-model-line-choice} we can consider $L=E_1$.
\forcenewline\assump{$p\in E_1$ and $p\not\in L$, any other line different than $E_1$}Take
\[G=\frac{1}{3}\sum^5_{j=2}A_j+ \frac{1}{3}B_1 + \frac{2}{3}E_1 \simq -K_S.\]
$B_1$ is irreducible, since otherwise
\[\pi^*(\oplane{1})-E_1\sim B_1=L_a+E_1,\]
and $L_a\sim\pioplane{1}-2E_1$ is a line in $S$ contradicting Lemma \ref{lem:del-Pezzo-deg4-lines-list}. 

The curves $A_j$ are irreducible too. If they were not irreducible, then
\[\pi^*(\oplane{2})-\sum_{\substack{k=1\\k\neq j}}^5 E_k\simq A_j=L_b+E_1,\]
where
$$p\not\in L_b=A_j-E_1 \sim \pi^*(\oplane{2})-2E_1-\sum_{\substack{k=2\\k\neq j}}^5 E_k$$
is a line, but there is no such a line in $S$, by Lemma \ref{lem:del-Pezzo-deg4-lines-list}. 

Since
$$E_1\cdot A_j=B_1\cdot E_1 = A_j\cdot B_1 =A_j\cdot A_k=1,\ j\neq 1, k\neq j, k\neq 1,$$
all curves in $\Supp(G)$ intersect each other transversely so we blow up once to obtain simple normal crossings:
$$\sigma^*(\lambda G+K_{\widetilde S})\simq\lambda \widetilde G + \left(\left(4\cdot \frac{1}{3}+ \frac{1}{3}+ \frac{2}{3}\right) \lambda-1\right) F_1= \lambda \widetilde G + (\frac{7}{10}\lambda-1)F_1,$$
and $\lambda=\frac{2}{3}$ gives $\disc(S,\lambda D)\geq -1$ and $(S, \frac{2}{3}G)$ is log canonical.

\begin{subcase}
\label{subcase:2.1}\forcenewline
\subassump{$q\not\in \widetilde C$ for $C$ any line or conic in $S$}In particular $q\not\in \widetilde E_1$. Let
$$H=\frac{1}{8}\sum_{2\leq j<k\leq 5}R_{1jk} + \frac{1}{8}\sum^5_{i=2}Q_i+ \frac{1}{4}E_1\simq -K_S$$
By Lemma \ref{lem:del-Pezzo-deg4-cubics}, all components of $\Supp(H)$ are irreducible. Moreover
$$\mult_p(H) = \left(\frac{1}{8}\cdot 6 + \frac{1}{8}\cdot 4+ \frac{1}{4}\cdot 1\right) = \frac{3}{2}.$$
\end{subcase}
\begin{subcase}
\label{subcase:2.2}\forcenewline
\subassump{$q\in \widetilde C$, for some conic $C$ in $S$ but $q\not \in \widetilde L$, for all lines $L$ in $S$}In particular $C$ is irreducible and $q\not \in \widetilde E_1$. By lemmas \ref{lem:del-Pezzo-deg4-conics} and \ref{lem:del-Pezzo-deg4-model-conic-choice} we can assume that 
$$C\sim\pi^*(\oplane{2})-\sum_{\substack{i=1\\i\neq k}}^5 E_i~\sim A_k, \text{ for } k\neq1.$$
where $p\in C=A_k,\ k\neq 1$, with $q\in \widetilde C$. Moreover, since $q\in \widetilde A_k$, \refSubassump assures $A_k$ is irreducible.

Without loss of generality, suppose $k=5$. Suppose there is another conic $C'$ in $S$ such that $p\in C'$, $q\in \widetilde C'$ and $C'\neq A_5$. Since $p\in C'\cap E_1$, by Lemma  \ref{lem:del-Pezzo-deg4-conics} either $C'=B_1$ or $C'=A_j$, for $j\neq 1,5$. However $A_5\cdot B_1=1$, $A_i\cdot A_5=1$ for $i\neq 1,5$. Therefore in both cases $C'$ and $A_5$ intersect transversely and $q\not\in \widetilde C'$. Let
\begin{equation*}
H=\frac{3}{5} A_5 + \frac{1}{5}\left(R_{125}+R_{135}+R_{145}\right)+ \frac{1}{5}Q_5 + \frac{2}{5}E_1\simq -K_S.
%\label{eq:2.2}
\end{equation*}
All components of $\Supp(H)$ are irreducible by Lemma \ref{lem:del-Pezzo-deg4-cubics}.

We show that $(S, \frac{2}{3}H)$ is log canonical. Let $\sigma_0:S_0\ra S$ be the blow up at $p$ with exceptional divisor $F_1$ with $q\in F_1$. Table \ref{tab:intersectNumbers} gives the intersection numbers in $S_0$. 
\begin{table}[htb]%
\begin{center}
\begin{tabular}{|r||c|c|c|c|c|c|c|}
\hline
										&$\widetilde {A_5}$	&$\widetilde {R_{125}}$	&$\widetilde {R_{135}}$	&$\widetilde {R_{145}}$	&$\widetilde {Q_5}$	&$\widetilde{E_1}$ &$F_1$	\\
\hline \hline
$\widetilde {A_5}$	& &1&1&1&1&0&1\\
\hline
$\widetilde {R_{125}}$	& & &1&1&1&0&1\\
\hline
$\widetilde {R_{135}}$	& & & &1&1&0&1\\
\hline
$\widetilde {R_{145}}$	& & & & &1&0&1\\
\hline
$\widetilde {Q_{5}}$	& & & & & &0&1\\
\hline
$\widetilde {E_{1}}$	& & & & & & &1\\
\hline
\end{tabular}
\caption{Intersection numbers for subcase 2.2.}
\label{tab:intersectNumbers}
\end{center}
\end{table}
Since all curves other that $E_1$ in Table \ref{tab:intersectNumbers} intersect normally and pass through $q$, we just need to blow up $q$ to obtain simple normal crossings. Let $\sigma:\widetilde S \ra S$ be the composition of both blow-ups and $F_2$ be the second exceptional divisor. Then:
\begin{align*}
\sigma^*(\lambda H+K_{\widetilde S})\simq\lambda \widetilde H &+ \left(\left(\frac{3}{5}+3\cdot \frac{1}{5}+ \frac{1}{5}+ \frac{2}{5}\right)\lambda -1\right) F_1+\\
&\left(\left(\frac{7}{5}+\frac{9}{5}\right)\lambda -2\right) F_2,
\end{align*}
and for $\lambda=\frac{2}{3}$, the pair $(S,\lambda H)$ is log canonical.
\end{subcase}

\begin{subcase}
\label{subcase:2.3}{ \ \\ }
Suppose that under \refAssump  $q\in \widetilde L$ for some line $L$ in $S$. Then $L=E_1$.
\forcenewline
\subassump{$q\in \widetilde E_1$}Suppose for contradiction that $q\in \widetilde C$ where $C$ is a conic in $S$. As in case \ref{subcase:2.1} we can assume, by using Lemma \ref{lem:del-Pezzo-deg4-conics} and Lemma \ref{lem:del-Pezzo-deg4-model-conic-choice} that $C= A_5\sim\pi^*(\oplane{2})-\sum_{i=1}^4E_i$.
$A_5$ is irreducible, since otherwise $L_a\sim A_5-E_1\sim \pioplane{2} -2E_1-\sum_{i=2}^4E_i,$ would be a line in Lemma \ref{lem:del-Pezzo-deg4-lines-list}. Since $A_5\cdot E_1=1$, $A_5$ and $E_1$ intersect transversally at $p$. This is impossible, since $q\in \widetilde A_5\cap \widetilde E_1$. We conclude that $q$ does not belong to the strict transform of any conic. Now, take
$$H=Q_1+E_1\sim -K_S.$$
By Lemma \ref{lem:del-Pezzo-deg4-cubics} the curve $Q_1$ is irreducible. The pair $(S,\frac{2}{3}H)$ is log canonical, since $Q$ and $E_1$ intersect each other at worst at a tacnodal point.
\end{subcase}
\end{case}
\end{proof}

\subsection{Proof of Theorem \ref{thm:Main} in degree $4$}
\label{sec:delPezzo-4-proof}
In this section we show that $\glct(S)=\frac{2}{3}$. We start by showing that $\glct(S)\leq \frac{2}{3}$. Take $p=E_1\cap L_{12}$ and the conic $A_2$, which is non-singular and irreducible. Consider $G=E_1+L_{12}+A_2\sim K_S$. The result follows, since $\glct(S)\leq \lct_p(S, G)=2/3$.

We need to show $\glct(S)\geq \frac{2}{3}$. We proceed by contradiction. Suppose there is an effective $\bbQ$-divisor
$$D=\sum{d_i D_i}\simq -K_S, \qquad d_i>0 \ \forall i$$
such that $(S,\lambda D)$ is not log canonical for some $\lambda<\frac{2}{3}$. Then $\nonKLT(S,\lambda D)\neq \emptyset$. 

\begin{lem}
\label{lem:deg4LCSpoints}
$\nonKLT (S,\lambda D)$ contains only isolated points.
\end{lem}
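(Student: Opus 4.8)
The goal is to show that the non-klt locus of $(S,\lambda D)$ contains no curves, only isolated points. The plan is to rule out that any irreducible curve $C$ lies in $\nonKLT(S,\lambda D)$. Write $D = \mu C + \Omega$ with $C\not\subset\Supp(\Omega)$. By Lemma \ref{lem:adjunction}(ii), if $C\subset\nonKLT(S,\lambda D)$ then $\lambda\mu\geq 1$, hence $\mu\geq 1/\lambda > 3/2$. Intersecting with $-K_S$ gives $\deg D \geq \mu\deg C > \tfrac32\deg C$, and since $\deg D = (-K_S)^2 = 4$ this forces $\deg C \leq 2$; in fact $\deg C = 1$ would give $\mu > 3/2$ with $(-K_S)\cdot\Omega = 4-\mu < 5/2$, while $\deg C = 2$ gives $\mu > 3/2$ and $(-K_S)\cdot\Omega = 4 - 2\mu < 1$. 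So $C$ is a line or an irreducible conic appearing with very high multiplicity.

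First I would dispose of the case $\deg C = 2$. Here $C$ is an irreducible conic with $\mu > 3/2$; since $C^2 = 0$ (Table \ref{tab:delPezzo-4-lowdegree}), $\Omega\cdot C \geq 0$ automatically, so I instead compare against the complementary curve. By Lemma \ref{lem:del-Pezzo-deg4-complementary-hyperplane-section} there is an irreducible curve $Z$ with $Z + C \in \vert -K_S\vert$. Then $D \simq Z + C$, and applying Lemma \ref{lem:Convexity} (convexity) with $B = Z+C$ one produces an effective $\bbQ$-divisor $D'\simq -K_S$, still not log canonical, but with $\mu' < \mu$ — iterating, the coefficient of $C$ can be lowered below $1/\lambda$ while keeping strict non-log-canonicity, contradicting Lemma \ref{lem:adjunction}(ii) (the curve would no longer be contained in the non-klt locus, so the non-log-canonicity must come from an isolated point, which is what we want to conclude anyway). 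The cleaner route: replace $D$ by $D'$ so that $C\not\subset\nonKLT$, which does not change that $\nonKLT\neq\emptyset$; repeating for every conic and line appearing with coefficient $\geq 1/\lambda$ leaves a divisor whose non-klt locus contains no curve.

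For $\deg C = 1$, $C$ is one of the sixteen lines of Lemma \ref{lem:del-Pezzo-deg4-lines-list}, with $\mu > 3/2$ and $(-K_S)\cdot\Omega < 5/2$. Using $C^2 = -1$ we get $\Omega\cdot C = (D - \mu C)\cdot C = (-K_S)\cdot C + \mu = 1 + \mu > 5/2$ — wait, that is the wrong sign bookkeeping; more carefully $D\cdot C = (-K_S)\cdot C = 1$, so $\Omega\cdot C = 1 - \mu C^2 \cdot(\text{coeff})$... the correct computation is $1 = D\cdot C = \mu C^2 + \Omega\cdot C = -\mu + \Omega\cdot C$, giving $\Omega\cdot C = 1 + \mu > 5/2$. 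But $\deg\Omega = (-K_S)\cdot\Omega = 4 - \mu < 5/2$, and $\Omega$ effective with support of total anticanonical degree $< 5/2$ meeting the line $C$ in total intersection $> 5/2$ is impossible once one checks that every irreducible curve $B$ on $S$ satisfies $B\cdot C \leq (-K_S)\cdot B$ unless $B = C$ — a finite check against Table \ref{tab:delPezzo-4-lowdegree} and the Hodge index theorem. Since $C\not\subset\Supp(\Omega)$, this yields $\Omega\cdot C \leq \deg\Omega < 5/2$, the contradiction. The main obstacle I anticipate is precisely this last step: establishing the uniform bound $B\cdot C \leq (-K_S)\cdot B$ for irreducible $B\neq C$ cleanly, which requires either the Hodge index theorem (an irreducible curve with $B\cdot C > (-K_S)\cdot B \geq B\cdot(-K_S)$ would violate $B^2 C^2 \leq (B\cdot C)^2$ combined with ampleness of $-K_S$) or a direct but slightly tedious enumeration; once that is in hand the rest is bookkeeping with the degree identity $\deg D = 4$.
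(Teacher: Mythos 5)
Your opening reduction (a curve in $\nonKLT(S,\lambda D)$ must be a component $D_i$ with $d_i>3/2$, hence $\deg D_i\le 2$) is exactly the paper's first step, but both of your case arguments have genuine gaps. In the conic case the convexity iteration does not produce a contradiction: lowering the coefficient of $C$ below $1/\lambda$ merely removes $C$ from the non-klt locus of the \emph{new} divisor $D'$, whereas the lemma is a statement about the non-klt locus of the \emph{given} $D$; moreover Lemma \ref{lem:Convexity} only lets you choose $\alpha$ so that \emph{some} component of $B=Z+C$ leaves the support, and since $D\mapsto \frac{1}{1-\alpha}(D-\alpha B)$ inflates every remaining coefficient by $\frac{1}{1-\alpha}$, the ``repeat for every bad component'' step need not terminate (if $Z$ drops out first, the coefficient of $C$ can actually increase). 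In the line case your target inequality $B\cdot C\le (-K_S)\cdot B$ for irreducible $B\neq C$ is true, but the Hodge-index justification is vacuous: with $C^2=-1$ the inequality $B^2C^2\le (B\cdot C)^2$ reads $-B^2\le (B\cdot C)^2$, which is automatic for $B^2\ge 0$ and rules nothing out. What does prove it is that $-K_S-C$ is represented by the irreducible curve $Z$ of Lemma \ref{lem:del-Pezzo-deg4-complementary-hyperplane-section}; since $\deg Z\ge 2$, $Z$ is not a line, so $Z^2\ge 0$ and $Z\cdot B\ge 0$ for every irreducible $B$, i.e.\ $B\cdot C\le \deg B$.

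That observation is, in substance, the paper's whole proof, and it handles lines and conics at once: with $Z+D_i\sim -K_S$ one computes $Z\cdot D_i=\deg D_i-D_i^2=2$ by the genus formula, while $Z\cdot D_j\ge 0$ for the other components, so $Z\cdot D\ge 2d_i>3$, against $Z\cdot D=\deg Z=4-\deg D_i\le 3$. I suggest you drop the convexity route entirely and either adopt this computation or finish your own line-type argument using the nefness of $-K_S-C$: for a line you get $\Omega\cdot C=1+\mu>5/2$ versus $\Omega\cdot C\le\deg\Omega=4-\mu<5/2$, and for a conic you get $\Omega\cdot C=2$ from $C^2=0$ versus $\Omega\cdot C\le\deg\Omega=4-2\mu<1$.
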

\begin{proof}
If $C\subset \nonKLT(S,\lambda D)$, where $C$ is a curve, then $C=D_i$ for some $D_i$ such that $d_i>\frac{3}{2}$ by Lemma \ref{lem:adjunction} (ii). Then
$$4=-K_S \cdot D = \sum d_i \deg(D_i)>\frac{3}{2}\deg(D_i),$$
so $\deg (D_i) \leq 2$.
Using Lemma \ref{lem:del-Pezzo-deg4-complementary-hyperplane-section} choose an irreducible curve $Z$ such that $D_i+Z$ is cut out by a hyperplane section of $S$ passing through $D_i$. We have $D_i+Z\sim -K_S\simq D$. In particular $\deg Z \geq 2$, so $Z\cdot D_j\geq 0$ for all irreducible $D_j$ (since only lines can have negative self-intersection). Then
$$(Z\cdot D)\geq d_i(Z\cdot D_i)=d_i(-K_S-D_i)\cdot D_i =d_i\left( \deg D_i -(\deg D_i-2) \right)=2d_i>3$$
by the genus formula, since all lines and conics in $S$ are rational. But  $Z\cdot D=4-\deg D_i\leq 3$, giving a contradiction.
\end{proof}

Let $p\in \nonKLT(S,\lambda D)$, i.e. the pair $(S, \lambda D)$ is not log canonical at some point $p$.
\begin{lem}
The point $p$ is not in the intersection of two lines.
\end{lem}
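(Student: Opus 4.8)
I would argue by contradiction. Suppose $(S,\lambda D)$ is not log canonical at $p=L_1\cap L_2$, the intersection of two lines. By Lemma~\ref{lem:del-Pezzo-deg4-model-line-choice} we may choose the model $\pi\colon S\to\bbP^2$ so that $L_1=E_1$ and $L_2=L_{12}$; recall $E_1^2=L_{12}^2=-1$ and $E_1\cdot L_{12}=1$ (Lemma~\ref{lem:del-Pezzo-deg4-lines-list}). The auxiliary curve I would use is a general member $A_2$ of the pencil $\calA_2$ of Table~\ref{tab:delPezzo-4-lowdegree}: it is the proper transform of the base‑point‑free pencil of plane conics through the four general points $p_1,p_3,p_4,p_5$, hence a smooth irreducible conic with $A_2^2=0$ and $E_1+L_{12}+A_2\sim -K_S$; dotting the last relation with $E_1$ and with $L_{12}$ gives $A_2\cdot E_1=A_2\cdot L_{12}=1$.

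First I would show every line through $p$ is a component of $D$: if a line $L\ni p$ had $L\not\subseteq\Supp(D)$, then Lemma~\ref{lem:adjunction}(iii), applied to $(S,\lambda D)$ with $C=L$ and $m=0$, would give $1=L\cdot(-K_S)=L\cdot D>1/\lambda>3/2$, a contradiction. Hence $E_1,L_{12}\subseteq\Supp(D)$ and we may write
\[D=aE_1+bL_{12}+\Omega,\qquad a,b>0,\qquad E_1,L_{12}\not\subseteq\Supp(\Omega),\ \Omega\ge 0.\]
Intersecting $D$ with $E_1$, $L_{12}$ and $A_2$ (using $D\cdot E_1=D\cdot L_{12}=1$ and $D\cdot A_2=\deg A_2=2$) gives
\[\Omega\cdot E_1=1+a-b,\qquad \Omega\cdot L_{12}=1-a+b,\qquad \Omega\cdot A_2=2-a-b.\]
Since $A_2$ is irreducible with $A_2^2=0$ we have $\Omega\cdot A_2\ge 0$, hence $a+b\le 2$. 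Also, since $E_1$ and $L_{12}$ are smooth at $p$, pass through $p$, and are not components of $\Omega$, we get $\mult_p\Omega\le\min\{\Omega\cdot E_1,\Omega\cdot L_{12}\}=1-|a-b|\le 1$.

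Next I would verify the hypotheses of Cheltsov's inequality. By Lemma~\ref{lem:deg4LCSpoints}, $\nonKLT(S,\lambda D)$ is a finite set of points, so $E_1,L_{12}\not\subseteq\nonKLT(S,\lambda D)$ and therefore $\lambda a<1$, $\lambda b<1$ by Lemma~\ref{lem:adjunction}(ii); moreover $(S,\lambda D)$ is log canonical in a punctured neighbourhood of $p$. If $\mult_p\Omega=0$ then near $p$ the divisor $\lambda D=(\lambda a)E_1+(\lambda b)L_{12}$ has simple normal crossings with both coefficients $<1$, so $(S,\lambda D)$ would be log canonical at $p$; hence $0<\mult_p(\lambda\Omega)=\lambda\mult_p\Omega\le\lambda<1$. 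Now Theorem~\ref{thm:inequality-Cheltsov}, applied to the pair $(S,(\lambda a)E_1+(\lambda b)L_{12}+\lambda\Omega)$ with $C_1=E_1$, $C_2=L_{12}$ (which meet transversally at $p$, are smooth there, and are not components of $\lambda\Omega$), gives $(\lambda\Omega\cdot E_1)\vert_p>2(1-\lambda b)$ or $(\lambda\Omega\cdot L_{12})\vert_p>2(1-\lambda a)$. In the first case $\lambda(1+a-b)\ge\lambda(\Omega\cdot E_1)\vert_p>2-2\lambda b$, and in the second $\lambda(1-a+b)\ge\lambda(\Omega\cdot L_{12})\vert_p>2-2\lambda a$; either way $\lambda(1+a+b)>2$, so $a+b>2/\lambda-1>2$ because $\lambda<2/3$. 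This contradicts $a+b\le 2$.

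The main obstacle, I expect, is setting up the two inequalities that are then played against each other. Obtaining $a+b\le 2$ requires a complementary curve meeting both lines that is nef, and the conic $A_2$ with $A_2^2=0$ does exactly this; one must check it is genuinely irreducible (not a sum of two lines) and has the claimed intersection numbers. Obtaining $a+b>2$ requires Theorem~\ref{thm:inequality-Cheltsov}, whose applicability hinges on the bound $0<\mult_p\Omega\le 1$, itself coming from the fact that \emph{both} lines through $p$ are forced into $\Supp(D)$, so that $\Omega$ is a genuine residual divisor controllable by intersecting with those two lines. Once both inequalities are in place the contradiction is immediate.
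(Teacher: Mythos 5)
Your proof is correct and follows essentially the same route as the paper: the same reduction showing both lines lie in $\Supp(D)$, the same decomposition $D=aE_1+bL_{12}+\Omega$, the same bound $\mult_p\Omega\le 1$ from intersecting with the two lines, and the same application of Theorem \ref{thm:inequality-Cheltsov} to force $\lambda(1+a+b)>2$ against $a+b\le 2$. The only (harmless) variation is in how you obtain $a+b\le 2$: you intersect with a general irreducible member of the pencil $\calA_2$ and use that it is nef, whereas the paper takes the member of $\calA_2$ passing through $p$, checks it is irreducible, and uses Lemma \ref{lem:Convexity} to assume it is not a component of $D$.
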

\begin{proof}
Suppose for contradiction that $p$ is the intersection of two lines. By Lemma \ref{lem:del-Pezzo-deg4-model-line-choice} we may choose $\pi\colon S \ra \bbP^2$ such that $p=E_1\cap L_{12}$. For $L=E_1,L_{12}$ we have that $L\subseteq\Supp(D)$ since otherwise
$$1=L\cdot D \geq \mult_pD>\frac{1}{\lambda}>1,$$
by Lemma \ref{lem:adjunction} (i). Hence we may write $D=aE_1+bL_{12}+\Omega$ where $a,b>0$ and $E_1,L_{12}\not\subseteq\Supp(\Omega)$.

Observe that the curve $A_2$ in Table \ref{tab:delPezzo-4-lowdegree} with $p\in A_2$ is irreducible, since otherwise there would be lines passing through $p$ with either of the following rational classes
$$A_2-E_1\sim\pioplane{2}-2E_1-E_3-E_4-E_5,$$
$$A_2-L_{12}\sim\pioplane{1}+E_2-E_3-E_4-E_5,$$
which is impossible by Lemma \ref{lem:del-Pezzo-deg4-lines-list}. Since
$$A_2\cdot E_1=A_2\cdot L_{12}=L_{12}\cdot E_1=1,$$
the pair $(S, \lambda(A_2+E_1+L_{12}))$ is log canonical for $\lambda\leq \frac{2}{3}$. Therefore, by Lemma \ref{lem:Convexity} we may assume that $A_2\not\subset\Supp(D)$. We conclude
\begin{equation}
2\geq D\cdot A_2\geq a+b+\mult_p\Omega\geq a+b.
\label{eq:del-Pezzo-glct-4-proof-pEckardt}
\end{equation}
Now observe that
$$1=E_1\cdot D \geq -a+b+\mult_p\Omega,$$
$$1=L_{12}\cdot D \geq a-b+\mult_p\Omega,$$
and adding these two equations it follows that $\mult_p\Omega\leq 1$. The hypotheses of Theorem \ref{thm:inequality-Cheltsov} are satisfied. Therefore one of the following holds:
$$2(1-\lambda a)<L_{12}\cdot (\lambda\Omega)=\lambda(1-a+b)$$
$$2(1-\lambda b)<E_1\cdot (\lambda\Omega)=\lambda(1+a-b).$$
Since the roles of $a$ and $b$ are symmetric, it is enough to disprove the latter equation to obtain a contradiction. Indeed, the last inequality implies
$$2<\lambda(1+a+b)\leq3\lambda<2$$
by \eqref{eq:del-Pezzo-glct-4-proof-pEckardt}, a contradiction.
\end{proof}

Let $G$ be the effective $\bbQ$-divisor in Lemma \ref{lem:del-Pezzo-deg4-aux-divisors-G-Basic}. Recall that $(S,\lambda G)$ is log canonical and that all irreducible components $G_j\subset \Supp(G)$ satisfy $p\in G_j$.
By Lemma \ref{lem:Convexity}, we can assume there is an irreducible curve $G_j\subset \Supp (G)$ such that $G_j\not\subset \Supp (D)$. Then
$$2\geq \deg G_j= (-K_S)\cdot G_j= D\cdot G_j\geq \mult_p (D) \cdot \mult_p(G_j)\geq \mult_p(D).$$
Therefore, we have bounded the multiplicity of $D$ at $p$:
\begin{equation}
2\geq \mult_p(D) \geq \frac{3}{2}.
\label{eq:boundD}
\end{equation}

Let $\sigma : \widetilde S \lra S$ be the blow-up of $p$ with exceptional divisor $E$. By Lemma \ref{lem:logcanUpDown} the pair
$$(\widetilde S,\lambda \widetilde D +(\lambda\mult_p D -1)E)$$
is not log canonical at some $q\in E$. By \eqref{eq:boundD}, the pair is log canonical near $q\in E$. Applying Lemma \ref{lem:adjunction} (i) to this pair we obtain:
\begin{equation}
\mult_q(\widetilde D)+\mult_pD>3.
\label{eq:boundDtransform}
\end{equation}

Given $p\in S$ and $q\in E\subset \widetilde S$ as above, we apply Lemma \ref{lem:del-Pezzo-deg4-aux-divisors-H} to obtain an effective $\bbQ$-divisor $H=\sum h_i H_i$ on $S$ such that $\deg(H_i)\leq 3$ for all $i$, $(S,\lambda H)$ is log canonical with $p\in H_j$ for all irreducible components $H_j$ and $q\in \widetilde H_j$ whenever $\deg H_j>1$. Observe that if $\deg H_i=1$, then $H_i\subset \Supp (D)$, since otherwise
$$\frac{3}{2}\leq\mult_p(D)\leq D\cdot H_i=1.$$
which is impossible.

Since $(S, \lambda H)$ is log canonical, by Lemma \ref{lem:Convexity} we may assume that there is $H_j\not\subset \Supp (D)$ such that $q\in \widetilde H_j$, $p\in H_j$ and $2\leq\deg(H_j)\leq 3$. Then
$$\widetilde H_j \cdot \widetilde D = H_j \cdot D - \mult_p(H_j)\cdot\mult_p (D) \leq 3-\mult_p(D).$$
But $\widetilde H_j\not\subset \Supp(\widetilde D)$, so 
$$3-\mult_p(D)\geq \widetilde H_j \cdot \widetilde D\geq \mult_	q (\widetilde D),$$
contradicting \eqref{eq:boundDtransform}. This completes the proof of Theorem \ref{thm:Main} when $K_S^2=4$.
\section{del Pezzo surface of degree 2}
\label{sec:degree-2}
Let $S$ be a del Pezzo surface of degree $2$. We prove Theorem \ref{thm:Main} for this case.
Let $\omega=\frac{3}{4}$ if $\vert -K_S\vert$ has some tacnodal curve and $\omega=\frac{5}{6}$ otherwise. By \cite{ParkPezzoFibration}, if there is a tacnodal curve $C \in \vert\-K_S\vert$, then $\lct(S,C)=\frac{3}{4}$. Otherwise we can take $C\in \vert\-K_S\vert$ a cuspidal rational curve and $\lct(S,C)=\frac{5}{6}$. Therefore $\glct(S)\leq\omega$. We need to show:
$$\glct(S)\geq\omega.$$
We proceed by contradiction. Suppose there is an effective $\bbQ$-divisor $D\simq -K_S$ such that $(S,\omega D)$ is not log canonical. Reasoning as in Lemma \ref{lem:deg4LCSpoints} we can show that the non-klt locus $\nonKLT(S,\omega D)$ consists of isolated points. Let $p\in S$ be one of these points. Let $\calC\subset \vert-K_S\vert$ be the sublinear system fixing $p$. Suppose there is a curve $C \in \calC$ singular at $p$. The curve $C$ is the union of two lines intersecting at a tacnode or with simple normal crossings, a cuspidal rational curve or a nodal curve, so $(S,\omega C)$ is log canonical. By Lemma \ref{lem:Convexity} we may assume there is one component of $C$ not in $\Supp(D)$. The curve $C$ is reducible, since otherwise:
$$2=C\cdot D \geq \mult_pD \cdot \mult_pC>\frac{2}{\omega}>2,$$
by Lemma \ref{lem:adjunction} (i). 
If $C=L_1+L_2$, the union of two lines intersecting at $p$, then $(S, \omega (L_1+L_2))$ is log canonical and by Lemma \ref{lem:Convexity} we may assume that $L_1\not\subset \Supp(D)$. Then
\begin{equation}
1=L_1\cdot D \geq \mult_pD>\frac{1}{\omega}>1,
\label{eq:deg2AuxA}
\end{equation}
giving a contradiction by means of Lemma \ref{lem:adjunction} (i).

Therefore, we may assume that all $C\in \vert-K_S\vert$ passing through $p$ are non-singular at $p$. Let $\sigma:\widetilde S \ra S$ be the blow-up at $p$ with exceptional divisor $E$. Let $\widetilde D$ be the strict transform of $D$ in $\widetilde S$. Lemma \ref{lem:logcanUpDown} implies that the pair
\begin{equation}
(\widetilde S, \omega \widetilde D + (\omega\,\mult_pD-1)E)
\label{eq:deg2-log-pullback}
\end{equation}
is not log canonical at some point $q\in E$. Choosing a general $C\in \vert-K_S\vert$ with $p$ in $C$ we obtain that $2=C\cdot D\geq\mult_pD$, so $\omega\mult_pD-1\leq 1$ and the pair \eqref{eq:deg2-log-pullback} is log canonical near $q$. Applying Lemma \ref{lem:adjunction} (i) to this pair, we conclude
\begin{equation}
\mult_q\widetilde D +\mult_pD > \frac{2}{\omega}.
\label{eq:deg2-log-pullback-adjunction}
\end{equation}
By Proposition \ref{prop:curveDegDP} pick $C\in \calC$ such that $q\in \widetilde C$. By Lemma \ref{lem:Convexity}, if $C$ is irreducible, then $C\not \subset \Supp(D)$ and by \eqref{eq:deg2-log-pullback-adjunction} we obtain 
$$2-\mult_pD=\widetilde C \cdot \widetilde D \geq \mult_q(\widetilde D ) >\frac{2}{\omega}-\mult_p(D),$$
a contradiction. Hence $C=L_1+L_2$, the union of two lines $p\in L_1$, $p\not\in L_2$. The intersection numbers are:
$$L_1\cdot L_2=2, \qquad L_1^2=L_2^2=-1.$$
Since $(S,\omega C)$ is log canonical, by Lemma \ref{lem:Convexity} we can assume $L_1\subset \Supp(D)$ since otherwise the computation in \eqref{eq:deg2AuxA} gives a contradiction. Then, by Lemma \ref{lem:Convexity}, we may assume that $L_2\not\subset\Supp(D)$. Therefore we may write $D=mL_1+\Omega$ with $m>0$, $L_1,L_2\not\subseteq \Supp(\Omega)$. Then
$$1=L_2\cdot D=2m+L_2\cdot \Omega \geq 2m,$$
so $m\leq \frac{1}{2}$. Applying Lemma \ref{lem:adjunction} (iii) to the pair \eqref{eq:deg2-log-pullback} with $\widetilde L_1$ we obtain a contradiction, finishing the proof:
\begin{align*}
1&<\widetilde L_1 \cdot ( \omega \widetilde \Omega + (\omega\mult_p D-1)E)\\
&=\omega(L_1\cdot \Omega -\mult_p\Omega+\mult_p\Omega+m)-1\\
&<(L_1\cdot D -mL_1^2+m)-1 =2m\leq 1.
\end{align*}

\bibliographystyle{hep}
\bibliography{Bibliography}

\end{document}